\DeclareMathAlphabet{\mathup}{OT1}{\familydefault}{m}{n}
\newcommand{\dd}[1]{\mathop{}\!\mathup{d} #1}
\newcommand{\ddt}{\frac{\dd}{\dd t}}
\newcommand{\e}[1]{\mathop{}\!\mathup{e} #1}
\def\Xint#1{\mathchoice
{\XXint\displaystyle\textstyle{#1}}%
{\XXint\textstyle\scriptstyle{#1}}%
{\XXint\scriptstyle\scriptscriptstyle{#1}}%
{\XXint\scriptscriptstyle\scriptscriptstyle{#1}}%
\!\int}
\def\XXint#1#2#3{{\setbox0=\hbox{$#1{#2#3}{\int}$ }
\vcenter{\hbox{$#2#3$ }}\kern-.59\wd0}}
\def\avint{\Xint-}
\newcommand{\grad}{\nabla}
\newcommand{\laplace}{\Delta}
\newcommand{\N}{\mathbb{N}}
\newcommand{\Z}{\mathbb{Z}}
\newcommand{\R}{\mathbb{R}}
\newcommand{\T}{\mathbb{T}}
\newcommand{\Prob}{\mathbb{P}}
\newcommand{\EX}{\mathbb{E}}
\DeclareMathOperator{\diag}{diag}
\newcommand{\Ha}{\ensuremath{\mathcal{H}}}
\newcommand{\eps}{\varepsilon}
\newcommand{\la}{\langle}
\newcommand{\ra}{\rangle}
\newtheorem{proposition}{Proposition}[section]
\newtheorem{theorem}{Theorem}[section]
\newtheorem{lemma}{Lemma}[section]
\theoremstyle{definition}
\theoremstyle{remark}
\newtheorem{remark}{Remark}[section]
\DeclareMathAlphabet{\mathup}{OT1}{\familydefault}{m}{n}
\newcommand{\si}{\mathrm{s}}
\newcommand{\co}{\mathrm{c}}
\numberwithin{equation}{section}
\newcommand{\mel}{\MoveEqLeft}
\begin{document}

\title[Exponential mixing by random cellular flows]{Exponential mixing by random cellular flows}

\author[V. Navarro-Fernández]{Víctor Navarro-Fernández}
\address{(VNF) Department of Mathematics, Imperial College London, London, UK}
\email{v.navarro-fernandez@imperial.ac.uk}

\author[C. Seis]{Christian Seis}
\address{(CS) Institut für Analysis und Numerik, Universität Münster, Münster, Germany}
\email{seis@uni-muenster.de}

\date{\today}

\begin{abstract}
We study a passive scalar equation on the two-dimensional torus, where the advecting velocity field is given by a cellular flow with a randomly moving center. We prove that the passive scalar undergoes mixing at a deterministic exponential rate, independent of any underlying diffusivity. Furthermore, we show that the velocity field enhances dissipation and we establish sharp decay rates that, for large times, are deterministic and remain uniform in the diffusivity constant. Our approach is purely Eulerian and relies on a suitable modification of Villani’s hypocoercivity method, which incorporates a larger set of Hörmander commutators than Villani's original method.
\end{abstract}

\maketitle

\tableofcontents

\section{Introduction}

Mixing is a fundamental aspect of fluid flows, occurring both in natural phenomena and in industrial applications.
Examples include the transport of heat and pollutants in the atmosphere and in oceans, the distribution of freshwater in coastal waters near estuaries or of melting water near the polar ice caps, and mixing in chemical and pharmaceutical manufacturing. It is the transfer of information from larger to smaller length scales, driven by shear, and leads to the formation of fine-scale filamentary structures.

In laminar shear flows, whether linear or circular, the emerging filaments often exhibit a few characteristic length scales that decrease over time at a rather slow rate. In contrast, turbulent flows typically operate across a wide range of scales, such as through the interplay of interacting coherent vortex structures, making mixing significantly more effective. In this case, the average length scale decreases substantially faster.

If the observable undergoes an independent diffusion process (or, more accurately, conduction in the case of heat), the rate of diffusion is typically enhanced by the underlying fluid flow. This results from the rapid dispersal of filaments due to microscopic stochasticity.
In contrast to the mixing process, which reduces scale through filamentation, diffusion reduces intensity gradients.
The length scale at which diffusion prevents the fluid flow from generating finer structures—because they are immediately smoothed out—is known as the Batchelor scale. It determines the rate at which the observable decays at large times, which is significantly larger than the purely diffusive dissipation rate. 

In the mathematics community, mixing and enhanced dissipation in fluid dynamics have been a major focus of research over the past two decades. In the simplest setting,  the observable is considered passive, which means that it gives no feedback to the advecting fluid flow.  Treating the purely advective and the diffusive setting simultaneously, the mathematical model describing the transport of an observable  $\theta = \theta(t,x)\in \R$ is the \emph{passive scalar equation}
\begin{equation}\label{eq:AD}
\partial_t \theta + v\cdot \grad \theta = \kappa \laplace\theta.    
\end{equation}
We consider this model on the two-dimensional flat torus $\T^2=[0,2\pi]^2$. Here, $v = v(t,x)\in\R^2$ is the vector field modelling the velocity of an incompressible fluid, and thus
\[
v(t,x) = -\grad^{\perp} \psi(t,x) = \begin{pmatrix}
    \partial_{x_2} \psi\\-\partial_{x_1} \psi
\end{pmatrix}(t,x),
\]
for some  stream function $\psi(t,x)$. We normalise the mean estrophy to unity, $
\|\grad v\|_{L^2(\T^2)} = 2\pi $, so that the non-negative diffusivity constant $\kappa$ in \eqref{eq:AD} plays the role of an inverse P\'eclet number, measuring the strength of diffusivity relative  to the strength of advection.  

A popular toy model featuring a simple   yet rich structure is the cellular flow generated by the stream function $\psi_c(x_1,x_2)   = \sin(x_1)\sin(x_2)$, that is,
\begin{equation}
    \label{32}
 v_c( x) = \begin{pmatrix}
    \sin( x_1)\cos( x_2)\\ -\cos( x_1)\sin( x_2)
\end{pmatrix},
\end{equation}
see Figure \ref{fig1}(a) for a visualisation. This model describes an array of vortices with alternating rotation, as has been studied both in experiments, for instance, in magnetically forced convection \cite{RothsteinHenryGollub99}, and in numerics \cite{MathewMezicGrivopoulosVaidyaPetzold07}. In the mathematical community, cellular flows play an important role in the theory of homogenisation in diffusive processes  \cites{FannjiangPapanicolaou94,Heinze03,Koralov04}. Their influence on mixing and enhanced dissipation was investigated, for instance, in \cites{YaoZlatos17,IyerZhou23,BrueCotiZelatiMarconi24}: In the autonomous setting \eqref{32},  the stream lines are closed, and thus, particles are transported along periodic orbits about the cells' centres. This rigidity confines the mixing efficiency and mixing rates are at best algebraic. See Figure \ref{fig2}(a) for a snapshot in a mixing process. 
\begin{figure}
    \centering
  (a)  \includegraphics[width=0.4\textwidth]{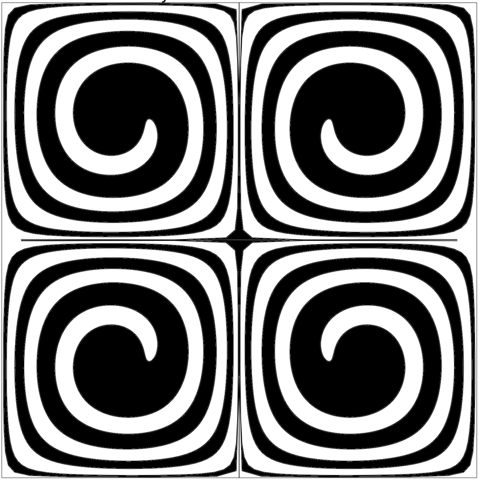}
     (b)   \includegraphics[width=0.4\textwidth]{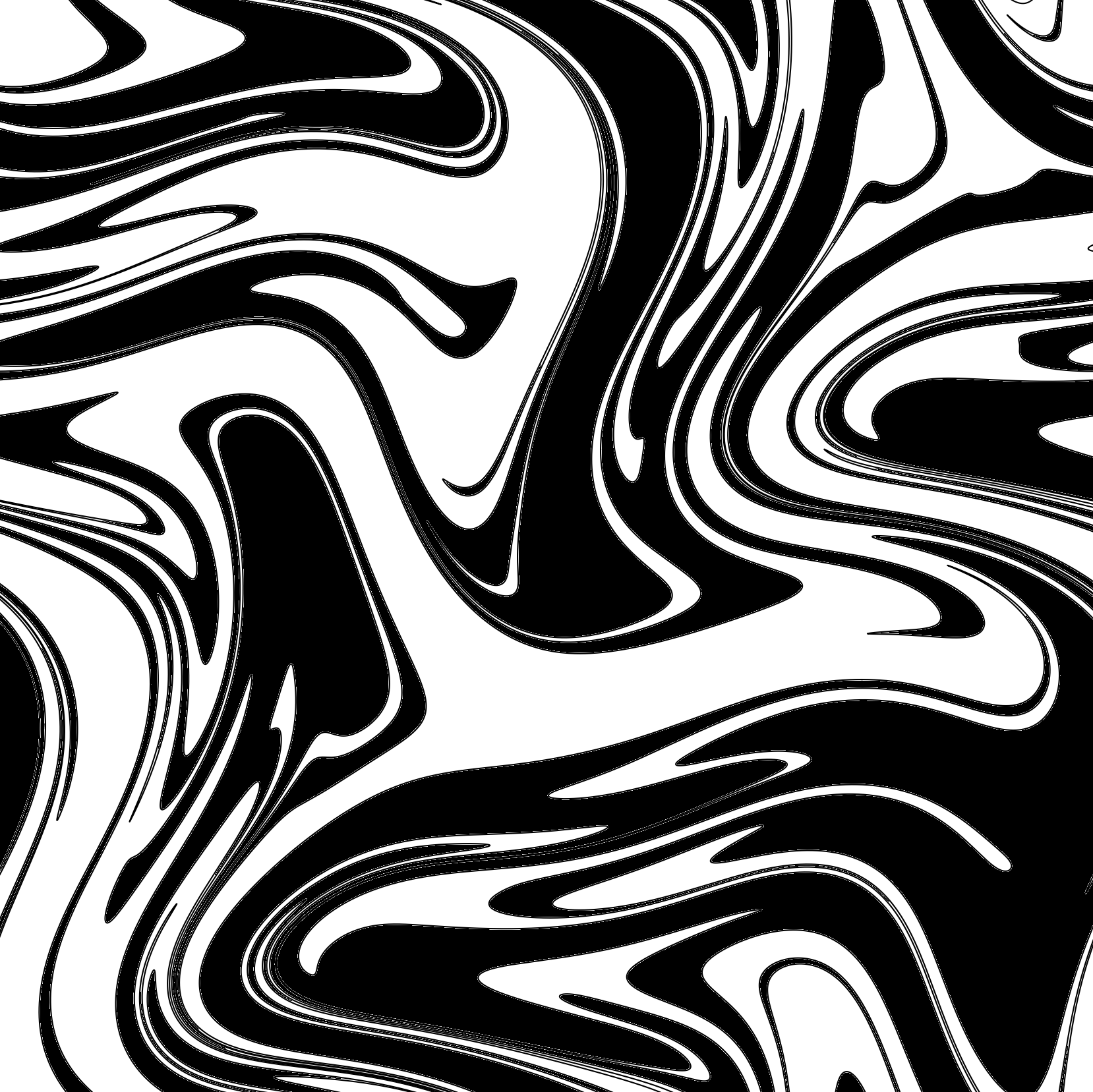}
    \caption{Snapshots of a simulation of the non-diffusive mixing process ($\kappa=0$) by (a) the steady cellular flow \eqref{32} and (b) the cellular flow with random phase shift \eqref{66}. Both numerical experiments start with the same circular and $\pm1$-valued  mean-zero initial configuration localized at the centre of the cell.}
    \label{fig2}
\end{figure}

In the present paper, we will perturb the steady cellular flow model by  randomly shifting the cell grid,
\begin{equation}\label{66}
v(t,x) = v_c(x-Y_{\nu}(t)),
\end{equation}
where $Y_{\nu}(t)$ is a stochastic process on the torus $\T^2$ driven by a standard Brownian motion at diffusivity $\nu$, see \eqref{heat} below.
This random cellular flow serves as a toy model for the interaction of vortex structures in a turbulent flow. See Figure \ref{fig2}(b) for a snapshot of the mixing process.  This model is reminiscent of the Pierrehumbert model \cite{Pierrehumbert1994}, where alternating shear flows with a random phase shift are considered. However, unlike the velocity in the Pierrehumbert model, ours varies continuously in time and is genuinely two-dimensional at any time. More specifically, instead of alternating shears in two orthogonal directions, our models superposes these. This point of view becomes more evident after a rotation of the coordinate system, see \eqref{31} below.

Because the cellular flow $v_c$ is a steady solution to the Euler equations, it follows from a formal application of the  chain rule that the random vector field \eqref{66} yields a special solution to the  Euler equations with stochastic transport noise
\[
\partial_t \omega + \left(v+ \ddt Y_{\nu}\right)\cdot \grad \omega = 0.
\]
Here, $\omega = \grad\times  u$ is the scalar vorticity in the fluid.

The purpose of our work is to establish that the passive scalar dynamics \eqref{eq:AD} induced by the simple random flow \eqref{66} shows typical features of turbulent dynamics, such as exponential mixing rates and diffusivity-independent enhanced dissipation rates. 

We start by addressing the non-diffusive setting.

 \begin{theorem}[Case $\kappa=0$]\label{Tk=0}
There exists a constant $\nu_0>1$ such that for any $\nu\ge \nu_0$ the following  holds true: There exists a deterministic constant $\lambda>0$ and an almost surely finite random constant $C>0$ such that for any mean-zero initial configuration $\theta_0\in H^1(\T^2)$, the solution $\theta$ of the passive scalar equation \eqref{eq:AD} is almost surely exponentially mixed at rate $\lambda$, that is, 
\[
\|\theta(t)\|_{\dot H^{-1}} \le C \e^{-\lambda t} \|\theta_0\|_{\dot H^1},
\]
for any $t\ge0$. The random constant satisfies $\EX_{\nu}[C^q]<\infty$ for any positive exponent $q$.
\end{theorem}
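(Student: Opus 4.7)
The plan is to pass to the frame co-moving with the Brownian centre. Setting $\tilde\theta(t,x) := \theta(t, x+Y_\nu(t))$, an application of It\^o's formula converts the pathwise transport equation for $\theta$ into the stochastic PDE
\begin{equation*}
d\tilde\theta = \bigl[-v_c(x)\cdot\nabla\tilde\theta + \nu\Delta\tilde\theta\bigr]\,dt + \nabla\tilde\theta\cdot dY_\nu,
\end{equation*}
now advected by the autonomous cellular flow $v_c$ at the price of an It\^o-corrected diffusion and a gradient noise. Since the $\dot H^{-1}$ norm is translation invariant, $\|\theta(t)\|_{\dot H^{-1}} = \|\tilde\theta(t)\|_{\dot H^{-1}}$ pathwise, so it suffices to control $\tilde\theta$.

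I would then close a deterministic evolution for the two-point correlation $U(t,x,y) := \mathbb{E}[\tilde\theta(t,x)\tilde\theta(t,y)]$. A direct It\^o computation gives
\begin{equation*}
\partial_t U + v_c(x)\cdot\nabla_x U + v_c(y)\cdot\nabla_y U = \nu\,|\nabla_x+\nabla_y|^2\, U
\end{equation*}
on $\T^2\times\T^2$: the common-noise structure forces the It\^o correction to cancel the $\nu\Delta_x$ and $\nu\Delta_y$ contributions along the relative direction, leaving dissipation only along the centre-of-mass direction $\eta = (x+y)/2$. Writing
\begin{equation*}
\mathbb{E}\|\theta(t)\|_{\dot H^{-1}}^2 = \iint G(x-y)\,U(t,x,y)\,dx\,dy,
\end{equation*}
with $G$ the Green's function of $-\Delta$ on $\T^2$, the mean-square decay problem reduces to the spectral-gap analysis of a purely deterministic linear evolution on the product torus.

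The analytic heart of the proof is a Villani-type hypocoercivity estimate for the generator $\mathcal{L} := v_c(x)\cdot\nabla_x + v_c(y)\cdot\nabla_y - \nu|\nabla_x+\nabla_y|^2$, written as $\mathcal{L} = B + A^*A$ with skew-symmetric transport $B$ and a degenerate dissipation $A^*A$ supported only in the centre-of-mass direction. The commutator $[A,B]$ carries $\nabla v_c$ evaluated at $x$ and $y$ and should therefore inject coercivity in the relative variable $\xi = x-y$ away from the diagonal. Following Villani's recipe I would introduce a modified quadratic functional of the schematic form
\begin{equation*}
\Phi[U] := \|U\|_{\dot H^{-1}_x\dot H^{-1}_y}^2 + \alpha\,\langle AU, [A,B]U\rangle + \beta\,\|[A,B]U\|^2 + \cdots
\end{equation*}
with carefully tuned small weights, and establish the differential inequality $\mathcal{L}\Phi \leq -2\lambda\Phi$ for some deterministic $\lambda > 0$ whenever $\nu \geq \nu_0$. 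This produces the mean-square bound
\begin{equation*}
\mathbb{E}\|\theta(t)\|_{\dot H^{-1}}^2 \leq C\,e^{-2\lambda t}\|\theta_0\|_{\dot H^1}^2.
\end{equation*}

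To upgrade from this mean-square bound to the almost sure statement with moments $\mathbb{E}_\nu[C^q] < \infty$ for all $q > 0$, I would run the same hypocoercivity scheme on the $2q$-point correlations $U^{(2q)}(t,x_1,\dots,x_{2q}) := \mathbb{E}[\prod_i \tilde\theta(t,x_i)]$. By the common-noise structure these again dissipate only along the single common-translation direction $\sum_i \nabla_{x_i}$, and the analysis yields $\mathbb{E}\|\theta(t)\|_{\dot H^{-1}}^{2q} \leq C_q\,e^{-2q\lambda t}\|\theta_0\|_{\dot H^1}^{2q}$ with the same rate $\lambda$. A standard Borel--Cantelli/Kolmogorov argument then produces the almost surely finite random constant $C$ with finite moments of all orders.

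The principal obstacle is the construction and coercivity of $\Phi$: the cellular flow $v_c$ has stagnation points and hyperbolic separatrices on which the leading commutator $[A,B]$ degenerates, so higher iterated Lie brackets must be enlisted to recover uniform coercivity in $\xi$ across the whole product torus. A related delicacy, responsible for the threshold $\nu_0$, is that the centre-of-mass diffusion must be sufficiently strong for the hypocoercive inequality to close with a rate that survives the passage to the multi-point analysis uniformly in $q$.
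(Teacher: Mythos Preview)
Your two-point setup is essentially the paper's, just in different coordinates. Rather than pass to the co-moving frame, the paper keeps the centre $y\in\T^2$ as an \emph{extra} variable: it studies
\[
f(t,x,z,y)=\EX_\nu\bigl[\theta(t,x)\,\theta(t,z)\,\rho_0(Y_t^{-1}(y))\bigr]
\]
on $\T^6$, where the diffusion is the full Laplacian $\nu\Delta_y$ and the transport is by $v_c(x-y)\oplus v_c(z-y)$. Your co-moving change of variables collapses this to $\T^4$ with degenerate diffusion $\nu|\nabla_x+\nabla_y|^2$; the two pictures are related by the substitution $(x,z)\mapsto(x-y,z-y)$ followed by projecting out $y$. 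The paper's bookkeeping is cleaner for Villani's scheme because the chain starts from $C_0=\nabla_y$ and the iterated commutators $C_1,C_2,C_3$ (together with new ``orthogonal complements'' $S_1,S_2$) have explicit trigonometric form. One point you underplay: the Lyapunov functional is not built on an $\dot H^{-1}$-type quantity but on a \emph{weighted $H^1$} norm for $f$ that degenerates precisely on the diagonal $\{x=z\}$, which is how the two-point degeneracy is absorbed. The $\dot H^{-1}$ control on $\theta$ appears only at the very end, by duality.

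The genuine gap is your almost-sure upgrade. Running hypocoercivity on the $2q$-point function lives on $\T^{4q}$ with a single $2$-dimensional direction of dissipation; the commutator hierarchy, the number of pairwise diagonals, and the weight structure all explode with $q$, and there is no mechanism in your outline that forces the rate $\lambda$ to be uniform in $q$. The paper avoids this entirely: it never leaves the two-point level. From the two-point decay one gets, for each pair of Fourier modes $k,k'$,
\[
\EX_\nu\bigl|\langle e^{ik'\cdot X_n(x)},e^{ik\cdot x}\rangle\bigr|^2 \lesssim |k|\,e^{-\lambda n},
\]
and Chebyshev plus Borel--Cantelli is run over the countable family $(k,k',n)$ rather than over moments $q$. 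The polynomial loss in $|k|$ is absorbed by measuring $\theta_0$ and the test function in $H^s$ for some $s>1$, then interpolated down to $s=1$. All moments of the random constant $C$ come for free from a tail estimate on an auxiliary random variable built out of the $N_{k,k'}$'s, not from higher-point correlations.
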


In the statement of the theorem, the expectation $\EX_{\nu}$ is understood with respect to any realization of the random process $Y_{\nu}$, see Section \ref{s:strategy} for details. 

Considering homogeneous negative Sobolev norms for measuring the average degree of filamentation   was proposed and promoted in \cites{MathewMezicPetzold05,Thiffeault12}. Particularly popular among these norms is the $\dot H^{-1}$ norm, because it has a conveniently simple representation and it scales precisely like  length. Hence, the estimate in Theorem \ref{Tk=0} can indeed be interpreted as a bound on the decay of the average length scale in the system, saying that the system is mixing (at least) with an exponential rate. That mixing cannot occur at a faster rater was established in \cites{CrippaDeLellis08,LunasinLinNovikovMazzucatoDoering12,Seis13a,IyerKiselevXu14,Leger18}, which implies that the result in Theorem \ref{Tk=0} is actually sharp.

Choosing a positive Sobolev norm on the right-hand side is necessary for the pointwise result because the inviscid problem is time reversible. However, an interesting consequence of our analysis is that the constraint on the initial datum can indeed be relaxed to $L^2(\T^2)$ provided that the decay estimate is averaged over all realisations of the noise,  see \eqref{100} in Remark \ref{remark:batchelor} or Lemma \ref{prop:mixing-L2rhs}. Analogous  estimates  were obtained in \cite{CR24+}.  Noise-averaged negative regularity mixing estimates, where both the solution and the initial datum are considered in some negative Sobolev norm $\dot H^{-s}$, were established in  \cite{BFPS24+}.

Proving almost-sure exponential mixing for stochastic flow models was pioneered in the works of Bedrossian, Blumenthal and Punshon-Smith \cites{BBPS21,BBPS22a,BBPS22}, where, among other models, the passive scalar equation for the stochastically forced two-dimensional Navier--Stokes equations was considered. The main idea in these works is to regard the flow map associated with the stochastic vector field as a random dynamical system. Using Harris’ theorem, the multiplicative ergodic theorem, and Furstenberg’s criterion, one then establishes the ergodicity of certain Markov processes defined by the flow, as well as the positivity of the Lyapunov exponent.  The authors first obtained optimal rates of enhanced dissipation \cites{BBPS21}, then proved that the Lagrangian flow generated by their stochastic vector fields is \emph{chaotic}, namely they proved positivity of the top Lyapunov exponent \cites{BBPS22a}, and finally they established almost sure exponential decay of the correlations, which implies the sought exponential mixing result \cite{BBPS22}. 

The method has been later expanded, establishing that different constructions of random vector fields are (almost sure) exponential mixers. An extension to the stochastically forced Navier--Stokes equations with \emph{degenerate} noise has been obtained in \cite{CR24+}. One of the most relevant new examples is the aforementioned Pierrehumbert model, which is a smooth alternating shear flow in $\T^2$.  It has been showed to be an exponential mixer if  phases are randomised \cites{BlumenthalCotiZelatiGvalani23} or if the switching time is randomised \cites{Cooperman23}. Recently uniform in diffusivity mixing has been obtained as well, leading to optimal enhanced dissipation rates \cites{CIS24+}. Following a similar strategy, a randomised version of the Arnold--Beltrami--Childress (ABC) flows in $\T^3$ has been latterly shown to mix every $H^1$ initial configuration exponentially fast \cite{CotiZelatiNavarroFernandez}. Another relevant example of a stochastic vector field that has been proved to mix exponentially fast, also uniformly in diffusivity, is given by the Kraichnan model, which consists of a white-in-time incompressible Gaussian random field, see \cites{CZDG24,GessYaroslavtsev21,LTZ24+}.

Our approach for deriving the exponential decay estimate on mixing is significantly different from those  mentioned in the previous paragraphs, in that we stay within the Eulerian framework \eqref{eq:AD} instead of studying  Lagrangian variables. More specifically, we consider the expectation of the two-point correlation function, which solves a deterministic hypoelliptic equation on $\T^6$. To obtain exponential decay  of that quantity, we follow Villani's hypocoercivity method \cite{Villani09}, which relies on the study of iterated commutators in the spirit of H\"ormander \cite{Hoermander67}. Villani proposed a scheme to build a Lyapunov function that is equivalent to a first order Sobolev norm. Establishing a Gronwall inequality for that functional then gives exponential decay in norm.
It turns out that we have to  extend Villani's Lyapunov functional. In fact, in his original approach, Villani includes commutators iteratively by commutating with the advection term.  For us, it is crucial to implement  further Hörmander commutators generated by the noise term. As we will see, these new commutators complement Villani's commutators in an orthogonal fashion.  We will introduce all these operators in Section \ref{s:hypocoercivity}, where we also present and discuss our modified hypocoercivity approach in detail.

Hypocoercivity in the context of shear flows was studied earlier in \cites{BedrossianCotiZelati17,CotiZelati20} to obtain (almost) optimal estimates on rates of enhanced dissipation. However, with regard to the strength of the diffusivity of the underlying model, their regime is different from ours. While in \cites{BedrossianCotiZelati17,CotiZelati20}, hypocoercivity was studied for a hypoelliptic version of the passive scalar equation \eqref{eq:AD}, in which the inverse P\'eclet number $\kappa$ is small, we study hypocoercivity for the expectation, in which the hypoelliptic operator has its origin in the stochastic noise. As will be see, in order to establish our decay estimates, the stochastic diffusivity coefficient $\nu$ is necessarily large.

While our  approach yields less detailed dynamical information than the framework developed by Bedrossian, Blumenthal, and Punshon-Smith---such as the uniform positivity of the top Lyapunov exponent---, it suffices for establishing global estimates like bounds on mixing rates, which is the main goal of the present paper. We  stress that our method relies on the fact that our flow is finite dimensional: it is completely described by a single velocity field that is shifted in time. It would be interesting to see whether the strategy also applies to other finite-dimensional flow models in which the velocity field is not explicit.

Working in an Eulerian framework, it is actually not difficult to   extend our mixing estimates from Theorem \ref{Tk=0} to the diffusive setting $\kappa>0$.  Moreover, since our mixing rate $\lambda$ remains independent of the inverse P\'eclet number $\kappa$, our mixing result easily implies enhanced dissipation.

\begin{theorem}[Case $\kappa\in(0,1)$]
\label{Tk>0}
There exists a constant $\nu_0>1$ such that for any $\nu\ge \nu_0$ the following is true: There exists a deterministic constant $\lambda>0$ and an almost surely finite random constant $C>0$, both independent of $\kappa$ and an almost surely finite random constant $\mu(\kappa)>0$ such that for any mean-zero initial configuration $\theta_0\in H^1(\T^2)$, the solution $\theta$ of the passive scalar equation \eqref{eq:AD} is almost surely exponentially mixed at rate $\lambda$, that is,
\[
\|\theta(t)\|_{\dot H^{-1}} \le C e^{-\lambda t}\|\theta_0\|_{\dot H^1},
\]
for any $t\ge0$. Moreover, the model is dissipation enhancing and it holds
\begin{equation}
    \label{67}
\|\theta(t)\|_{L^2} \le \min \left\{ \frac{C}{\sqrt{\kappa}} \e^{-\lambda t},\e^{-\mu(\kappa)t}\right\} \|\theta_0\|_{L^2},
\end{equation}
for any $t\ge0$. The random constants satisfy $\EX_{\nu}[C^q]<\infty$ and $\EX_{\nu}[\e^{q/\mu(\kappa)}]<\infty$ for any positive exponent $q$, and there is the asymptotic formula
\begin{equation}
    \label{68}
\lim_{\kappa\to0} \log\left(\frac1{\kappa}\right)\mu(\kappa) \in\R,
\end{equation}
almost surely.
\end{theorem}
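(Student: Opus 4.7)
I would first establish the mixing estimate in the diffusive setting by adapting the proof of Theorem~\ref{Tk=0}. The expectation of the two-point correlation function now satisfies a hypoelliptic equation on $\T^6$ augmented by the dissipative operator $\kappa(\laplace_x+\laplace_y)$. Since this contribution is strictly dissipative, inserting it into the modified Villani-type Lyapunov functional of Section~\ref{s:hypocoercivity} produces only non-positive terms, with commutator errors of the right sign. Consequently, the Gronwall inequality from the proof of Theorem~\ref{Tk=0} persists with the same deterministic rate $\lambda$ and the same random constant $C$, both independent of $\kappa$.

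For the first bound in~\eqref{67}, I would combine the mixing estimate with the $L^2$ energy identity via interpolation. Using $\|\theta\|_{L^2}^2\le\|\theta\|_{\dot H^{-1}}\|\theta\|_{\dot H^1}$, one has $\|\theta\|_{\dot H^1}^2\ge\|\theta\|_{L^2}^4/\|\theta\|_{\dot H^{-1}}^2$. For $L^2$ initial data, the $L^2$ identity gives $2\kappa\int_0^1\|\nabla\theta\|^2\,ds\le\|\theta_0\|_{L^2}^2$, so by the mean value theorem there exists $\tau\in[0,1]$ with $\|\theta(\tau)\|_{\dot H^1}\le\kappa^{-1/2}\|\theta_0\|_{L^2}$. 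Applying the mixing estimate from time $\tau$ yields $\|\theta(t)\|_{\dot H^{-1}}\le C'\kappa^{-1/2}e^{-\lambda t}\|\theta_0\|_{L^2}$ for $t\ge1$. Substituting into
\[
\frac{d}{dt}\|\theta\|_{L^2}^2 \le -\frac{2\kappa\|\theta\|_{L^2}^4}{\|\theta\|_{\dot H^{-1}}^2} \le -\frac{2\kappa^2 e^{2\lambda t}}{(C')^2\|\theta_0\|_{L^2}^2}\|\theta\|_{L^2}^4
\]
and integrating delivers $\|\theta(t)\|_{L^2}\le(C/\kappa)^{1/2}e^{-\lambda t/2}\|\theta_0\|_{L^2}$. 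The full family $\rho\in(0,1/2]$ then follows by H\"older interpolation with the trivial bound $\|\theta(t)\|_{L^2}\le\|\theta_0\|_{L^2}$.

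For the second bound in~\eqref{67} and the asymptotic~\eqref{68}, I would iterate the previous step using the Markov property of $Y_\nu$. Pick the random stopping time $T(\kappa)=\lambda^{-1}[\log(C/\kappa)+\log 4]$ so that the $\rho=1/2$ bound gives $\|\theta(T)\|_{L^2}\le\tfrac{1}{2}\|\theta_0\|_{L^2}$. By stationarity of Brownian increments, the same mixing estimate applies on each subsequent interval with a fresh, identically distributed random constant $C_k$. Iterating $n$ times, $\|\theta(\sum_{k=1}^{n}T_k)\|_{L^2}\le 2^{-n}\|\theta_0\|_{L^2}$ with $T_k=\lambda^{-1}[\log(C_k/\kappa)+\log 4]$. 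The strong law of large numbers then gives $n^{-1}\sum_{k=1}^{n}T_k\to T_*(\kappa):=\lambda^{-1}[\log(1/\kappa)+\log 4+\EX_{\nu}[\log C]]$ almost surely, so the effective rate $\mu(\kappa):=\log 2/T_*(\kappa)$ satisfies $\log(1/\kappa)\mu(\kappa)\to\lambda\log 2$, yielding~\eqref{68}. The moment bound $\EX_{\nu}[e^{q/\mu(\kappa)}]<\infty$ follows because $e^{q/\mu(\kappa)}$ is essentially proportional to $(C/\kappa)^{q/(\lambda\log 2)}$, and $C$ has all finite moments by Theorem~\ref{Tk=0}.

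The main obstacle lies in the first step: one must carefully verify that the modified hypocoercivity construction of Section~\ref{s:hypocoercivity} continues to deliver the required Gronwall inequality uniformly in $\kappa\in(0,1)$. This entails tracking the interaction of the added diffusion $\kappa\laplace$ with all iterated commutators and their orthogonal complements, and ensuring that no hidden $\kappa$-dependence creeps into the auxiliary constants that weight the cross terms of the Lyapunov functional.
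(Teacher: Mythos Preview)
Your overall strategy matches the paper's, but there are inaccuracies in each step that are worth flagging.

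\textbf{Step 1 (uniform-in-$\kappa$ hypocoercivity).} Your claim that the added dissipation $\kappa\laplace_x$ ``produces only non-positive terms, with commutator errors of the right sign'' is not correct. When $\kappa\laplace_x$ is commuted with the operators $C_i$ and $S_i$ in the Lyapunov functional, genuine error terms of the wrong sign appear (for instance, $\kappa\langle\nabla_xC_1\nabla_xf,C_1f\rangle$ involves $\|\laplace_xf\|$, which is not a priori controlled). The paper handles this by adding an extra term $\tfrac{\kappa\delta}{2}\|\nabla_xf\|^2$ to $\Phi$; its dissipation produces $\kappa^2\delta\|\laplace_xf\|^2$ in $\Psi$, which then absorbs all $\kappa$-error terms with $\kappa$-independent coefficient conditions. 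You correctly identify this as the main obstacle, but your description of why it works is wrong.

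\textbf{Step 2 (first bound in \eqref{67}).} Your ODE approach via $\tfrac{d}{dt}\|\theta\|^2\le -2\kappa\|\theta\|^4/\|\theta\|_{\dot H^{-1}}^2$ is a legitimate alternative to the paper's method, which instead interpolates $\|\theta\|_{L^2}\le\|\theta\|_{\dot H^{-s}}^{1/(1+s)}\|\nabla\theta\|^{s/(1+s)}$, integrates in time over an interval of length $\tau$, and uses H\"older on $\int\|\nabla\theta\|^{s/(1+s)}$. Note, however, that your integration actually yields $\|\theta(t)\|\lesssim (C/\kappa)\,e^{-\lambda t}\|\theta_0\|$ rather than $(C/\kappa)^{1/2}e^{-\lambda t/2}$; this is harmless (interpolation with the trivial bound still gives all $\rho\in(0,1/2]$), but the stated exponents are off.

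\textbf{Step 3 (second bound and asymptotic).} Here your argument is unnecessarily complicated and has a genuine gap. You restart the process at random times $T_k$ and invoke ``fresh, identically distributed'' constants $C_k$ via the strong law of large numbers. But the mixing constant $C$ is a \emph{global} functional of the Brownian path $(\tilde B_t)_{t\ge0}$, built through a Borel--Cantelli argument over all integer times; it is not measurable with respect to any finite time segment. Since your stopping time $T_1$ depends on $C_1$, which already depends on the whole path, the $C_k$ are not obviously independent, and the SLLN does not apply as stated. The paper avoids this entirely: because the mixing estimate holds for \emph{all} initial data with the \emph{same} random constant $C$, one simply picks the deterministic (given the realization) time $t_*=\lambda'^{-1}(\log(C/\sqrt\kappa)+1)$, obtains $\|\theta(t_*)\|\le e^{-1}\|\theta_0\|$, and iterates directly to get $\|\theta(nt_*)\|\le e^{-n}\|\theta_0\|$. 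This gives $\mu(\kappa)=\lambda'(\log(C/\sqrt\kappa)+1)^{-1}$ and the asymptotic \eqref{68} immediately, without any appeal to ergodic averaging.
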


In order to stress the significance of the   decay estimates on the intensity \eqref{67}, we notice that in the absence of advection, the rate $\mu(\kappa)\sim \kappa$ is optimal, which is much smaller than $\mu(\kappa)\sim \log^{-1}(1/\kappa)$ or $\lambda\sim 1$ in the regime of large P\'eclet numbers, $\kappa\ll1$. In this regime, our  estimate \eqref{67} includes two cases: First, for relatively small times, $t\lesssim \log\frac1{\kappa}$, we have a non-uniform rate of enhanced dissipation, which can be stated as an estimate without an extra  constant,
\[
\|\theta(t)\|_{L^2} \le \e^{-\mu(\kappa)t}\|\theta_0\|_{L^2}.
\]
The asymptotic rate for $\mu(\kappa)$ given in \eqref{68} was proved to be optimal in the  setting of Sobolev regular velocity fields \cites{Seis23,MeyerSeis24}. It implies that this exponent is vanishing in the limit $\kappa\to0$, which is expected if there is no further constant in this estimate, because the $L^2$ norm is preserved by the dynamics. For larger times, $t\gtrsim \log \frac1{\kappa}$, the exponential rate of enhanced dissipation becomes uniform in $\kappa$, 
\[
\|\theta(t)\|_{L^2} \le  \frac{C}{\sqrt{\kappa}}  \e^{-\lambda t}\|\theta_0\|_{L^2},
\]
with $\lambda\sim 1$.  We remark that by interpolating with the standard $L^2$ energy balance, the power on the prefactor can be made arbitrarily small at the expense of decreasing simultaneously the exponential rate of decay. We notice furthermore that the uniform exponential decay rate is consistent with the Batchelor scale being of the order $\sqrt{\kappa}$, cf.~\cite{Batchelor1959}.

Estimates on enhanced dissipation were inferred earlier from mixing rates on the non-diffusive model \cites{CotiZelatiDelgandioElgindi20,FengIyer19}. Enhanced dissipation for stochastic fluid models was established  also in \cites{BBPS21,BBPS22}. In the deterministic setting, there is quite a number of examples for shear flows, in which the rate is significantly smaller, $\mu(\kappa)\sim \kappa^p$ for some $p$ dependent on the degeneracies of the shears, see, e.g.~\cites{BedrossianCotiZelati17,CotiZelati20,FengMazzucatoNobili23,BrueCotiZelatiMarconi24}.
To the best of our knowledge, our model is the only one besides those studied in \cites{BBPS21,BBPS22,CIS24+}, for which mixing in the diffusive setting could be proved. Optimal lower bounds on mixing in the diffusive setting are still unknown, although there are available double exponential estimates derived first by Poon \cites{Poon96,MilesDoering18}, and a recent result about lower bounds for the specific case of stochastically forced two-dimensional Navier--Stokes equations \cites{HPSRY24+}.

\begin{remark}\label{remark:batchelor}
    A small modification of the proofs leading to the mixing estimates in Theorems \ref{Tk=0} and \ref{Tk>0} yields as well an averaged mixing result for $L^2$ initial data,
    \begin{equation}
\label{100}
    \EX_\nu \|\theta(t)\|_{\dot H^{-1}} \le C\|\theta_0\|_{L^2} \e^{-\lambda t}
    \end{equation}
for all $t>0$, see Lemma \ref{prop:mixing-L2rhs} in Section \ref{s:mixing} for a rigorous statement. This estimate has an interesting consequence, as established by Cooperman and Rowan in \cites{CR25+}: it provides bounds on the Batchelor spectrum, which describes the energy distribution of a passive scalar in a turbulent flow with small diffusivity $\kappa$. Batchelor predicts a $|k|^{-1}$ decay of the energy spectrum in the advective subrange $ |k|\lesssim \kappa^{-1/2}$. Theorem 1.5 in \cite{CR25+} is applicable to our random vector field $v$ and shows that, upon adding a stochastic source $g$ to the advection-diffusion equation \eqref{eq:AD}, the energy spectrum of the passive scalar $\theta_g$ summed over constant width annuli in Fourier space satisfies the bounds
\[
\frac{h}{Cr} \le \lim_{t\to\infty} \EX_\nu \sum_{r\leq|k|\leq r+h} |\widehat\theta_g(t,k)|^2 \le \frac{Ch}{r},
\]
for some $C>1$, where $1\lesssim r\lesssim \kappa^{-1/2}$, and where $\widehat\theta_g$ denotes the Fourier transform of the passive scalar. 
\end{remark}

We will prove Theorems \ref{Tk=0} and \ref{Tk>0} simultaneously. Apparently, if $\kappa$ is positive, the equation for the expectation loses its hypoelliptic nature and becomes elliptic. Nevertheless, to obtain mixing rates that remain uniform in $\kappa$, it is still crucial to apply Villani's hypocoercivity method. The Lyapunov function is then modified only by the inclusion of a single term, whose sole purpose is to balance unavoidable diffusive error terms.

As indicated above, for  the passive scalar equation \eqref{eq:AD},  considering the standard cellular flow \eqref{32} is equivalent to studying the tilted cellular flow 
\begin{equation}
    \label{31}
  \tilde v_c(x) =  \begin{pmatrix}
    \sin( x_2)\\\sin( x_1)
\end{pmatrix}.
\end{equation}
Indeed, both are related via the spiral  transformation $ v_c( x) = d^{-1} R^T  \tilde v_c(dR  x)$, where $d=\sqrt2$ defines a dilating factor and   
\[
R = \frac1{\sqrt2}\begin{pmatrix}
    1&-1\\1&1
\end{pmatrix}
\]
is a rotation matrix. Up to a rescaling of the diffusivity constant by a factor of $d^2=2$, this spiral transformation leaves the passive scalar equation \eqref{eq:AD} invariant.   Both vector fields are plotted in Figure \ref{fig1}. 
\begin{figure}
    \centering
  (a)  \includegraphics[width=0.45\textwidth]{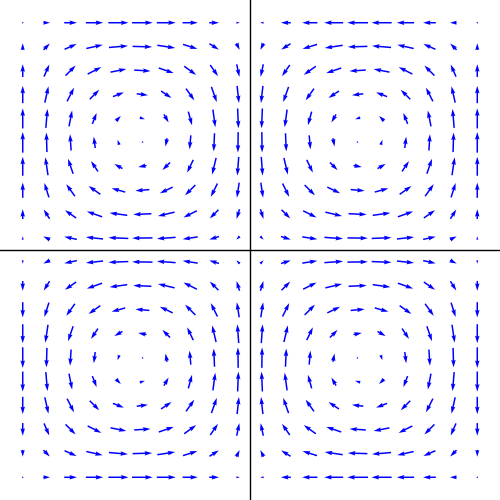}
     (b)   \includegraphics[width=0.45\textwidth]{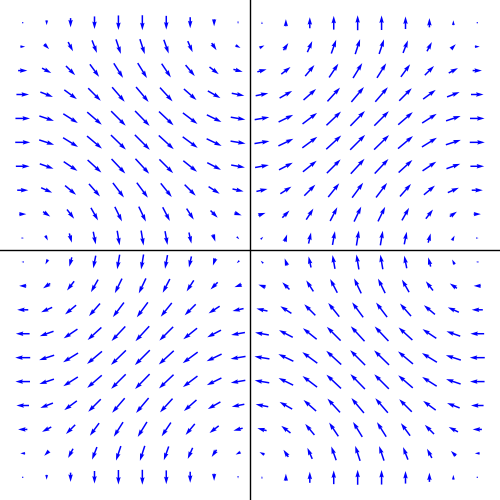}
    \caption{The two exemplary cellular vector fields $v_c$ given by \eqref{32} and $\tilde v_c$ given by \eqref{31} plotted on the flat torus $\T^2=[-\pi,\pi]^2$.}
    \label{fig1}
\end{figure}
For proving our Theorems \ref{Tk=0} and \ref{Tk>0}, it is thus enough to consider the slightly simpler vector field \eqref{31}, which we will do from here on. Moreover, we write by a slight abuse of notation,
\[
\tilde v_c(x,y) = \tilde v_c(x-y),
\]
so that $\tilde v(t,x) = \tilde v_c(x,Y_{\nu}(t))$. We will drop the tildes in the following.

Along this paper we will write $a\lesssim b$ if there exists a constant $C>0$, independent of all the relevant quantities involved on each estimate, such that $a\leq Cb$. Abusing slightly the notation, we will write $\|\cdot\|$ and $\langle\cdot,\cdot\rangle$ referring to the $L^2$ norms and inner products respectively, both on $\T^2$ and $\T^6$, although the domain of integration will always be clear from the context. When introducing any other norm $\|\cdot\|_X$ we will specify the corresponding Banach space $X$.

 \emph{This paper is organized as follows:} In Section \ref{s:strategy} we elaborate on the strategy that we follow to prove Theorems \ref{Tk=0} and \ref{Tk>0}. We explain how to obtain the decay of correlations and how to link it to the hypoelliptic PDE associated to the two-point process. In Section \ref{s:hypocoercivity} we prove that this PDE is indeed hypocoercive in a weighted $H^1$ norm that degenerates at the diagonal. In order to do so we adapt Villani's method to our setting by introducing a suitable energy functional, equivalent to the weighted $H^1$ norm, and demonstrate that it is exponentially decaying. Finally in Section \ref{s:mixing} we prove the decay of the correlations using the hypocoercivity of the two-point PDE obtained in the previous section. From the uniform-in-diffusivity decay of correlations result, we show how to obtain mixing and enhanced dissipation with an exponential rate independent of the diffusivity. In the Appendix \ref{s:feyman-kac} we elaborate on an application of the Feynman--Kac formula, that is a key element to obtain the mixing result.

\section{Strategy: Decay of correlations}\label{s:strategy}

The main goal of this paper is to obtain a mixing result with exponential rate for a class of cellular flows whose centre follows a random walk in $\T^2$. Even though our approach here is purely deterministic, we use some key ideas from previous works that put a focus on the study of the random dynamics generated by the flow associated to $v$, \cites{BBPS22,BBPS22a,BlumenthalCotiZelatiGvalani23}.

In our framework, the flow induced by the stochastic vector field is a volume-preserving mapping $X_t:\T^2\to\T^2$, $t\geq 0$, given by the SDE
\begin{equation}\label{flow}
\dd X_t = v_c(X_t,Y_t)\dd t + \sqrt{2\kappa}\dd B_t, \quad X_0 = \textrm{id}_{\T^2},
\end{equation}
where $B_t$ denotes a standard Brownian motion in $\T^2$, and $Y_t$ represents a stochastic process in $\T^2$  with diffusivity $\nu>0$ driven by another Brownian motion $\tilde B_t$,
\begin{equation}\label{heat}
\dd Y_t = \sqrt{2\nu}\dd \tilde B_t, \quad Y_0 = \textrm{id}_{\T^2}.
\end{equation}
In order to highlight the dependency of the flow on the stochastic process starting at some point $y\in\T^2$, we will occasionally write $X_t(x) = X_t(x;\{Y_s(y)\}_{s\in[0,t]})$. Notice that we can represent any solution to the PDE \eqref{eq:AD} using the flow map \eqref{flow} via the Feynman--Kac formula,
\[
\theta(t,x) = \EX_\kappa \theta_0(X_t^{-1}(x)),
\]
where $\EX_\kappa$ denotes the expectation operator with respect to the noise induced by the Brownian motion $B_t$. Of course, $\theta(t,x)$ is still a random quantity, in the sense that it still depends on the noise induced by the Brownian motion $\tilde B_t$. However, it does not depend on the particular choice of the Brownian motion: If $\hat B_t$ is another standard Brownian motion in $\T^2$ with diffusivity $\kappa$, and $Z_t$ is the solution to the associated SDE \eqref{flow} (where now $B_t$ is replaced by $\hat B_t$), then also $\theta(t,x) = \EX_{\kappa}\theta_0(Z_t^{-1}(x))$. In order to derive the PDE associated to the two-point process, it will be crucial to consider two independent (and thus, uncorrelated) Brownian motions.

Throughout the whole paper, and for the sake a better understanding, we will use the notation $\Prob_\kappa$ and $\EX_\kappa$ to denote the probability measure and expectation, respectively, associated to the noise induced by $B_t$ or $\hat B_t$, and $\Prob_\nu$ and $\EX_\nu$ to denote the probability measure and expectation associated to the noise induce by  $\tilde B_t$.

Based on the notion of \emph{strongly mixing mapping} from ergodic theory, we introduce the concept of the \emph{correlations} between any two functions $h$ and $g$ with zero mean,
\[
\text{Cor}_{X_t}(h,g) = \left| \int_{\T^2} g(x)\EX_\kappa h(X_t(x))\dd x \right|.
\]
Mixing, which shall be understood as a convergence to zero in the weak topology of $L^2$, is then related to the decay of this quantity: If $\textrm{Cor}_{X_t}(h,g)\to 0$ with probability $1$ and an exponential rate for any $h,g\in H^s(\T^2)$, then choosing $g=\theta_0$ and by a straightforward duality argument, we find that $\|\theta(t)\|_{H^{-s}}\to 0$ with probability $1$ and an exponential rate. Since $\textrm{Cor}_{X_t}(h,g)$ is a quantity that depends on the noise induced by $\tilde B_t$, at least formally we can compute the probability that at time $t=n\in \N$ this quantity is larger than $\e^{-\alpha n}$ for some $\alpha>0$ to be specified. Via Chebyshev inequality we find
\[
\begin{split}
\Prob_\nu [\textrm{Cor}_{X_n}(h,g)>\e^{-\alpha n}] & \leq \e^{2\alpha n}\EX_\nu\textrm{Cor}_{X_n}(h,g)^2 
\\
& = \e^{2\alpha n} \EX_\nu \iint_{\T^2\times\T^2} g(x)g(z)\EX_\kappa h(X_n(x))\EX_\kappa h(Z_n(z)) \dd x\dd z,
\end{split}
\]
because $\EX_{\kappa}h(X_n(z)) = \EX_{\kappa}h(Z_n(z))$. Here we introduce a new variable $z\in\T^2$ to rewrite the square of the integral using a double-variable approach. The information that we obtain from this little computation is that if we are able to prove that
\begin{equation}\label{Borel--Cantelli}
\EX_\nu \iint_{\T^2\times\T^2} g(x)g(z)\EX_\kappa h(X_n(x))\EX_\kappa h(Z_n(z)) \dd x\dd z \to 0
\end{equation}
as $n\to\infty$ with an exponential rate faster than $\e^{-2\alpha n}$, then using a Borel--Cantelli argument we would obtain that
\[
\Prob_\nu  \limsup_{n\to\infty} \left\lbrace\textrm{Cor}_{X_n}(h,g)>\e^{-\alpha n} \right\rbrace  = 0.
\]
This is a crucial step in the proof of exponential decay of correlations; for a detailed explanation we refer to  Section \ref{s:mixing}. The convergence of this quantity can be addressed using different techniques. In the original work by Bedrossian, Blumenthal and Punshon-Smith \cites{BBPS22,BBPS22a}, which was later extended to other different examples of vector fields \cites{BlumenthalCotiZelatiGvalani23,Cooperman23,CIS24+,CotiZelatiNavarroFernandez}, the authors use a random dynamical system framework to study the ergodic properties of the \emph{two-point Markov chain} that follows the trajectories of two particles starting from different points in the domain and driven by the same vector field. 

Our approach differs from this because we translate the study of the two-point Markov chain to the study of a two-point PDE, and then apply purely deterministic techniques to obtain the desired decay. Following the notation used in the previous computation, let $g=\theta_0$ and consider  a Lipschitz continuous probability density $\rho_0$ on $\T^2$, that is,
\begin{equation}\label{rho0}
\rho_0\in W^{1,\infty}(\T^2), \quad \rho_0\geq 0, \quad \int_{\T^2}\rho_0\dd y = 1,
\end{equation}
which models the initial distribution of the random shift process.
On a first note we observe that the function
\[
\rho(t,y) = \EX_\nu\rho_0(Y_t^{-1}(y))
\]
solves the heat equation
\[
\partial_t\rho = \nu\Delta_y\rho, \quad \rho(0,\cdot) = \rho_0 \quad \mbox{in }\T^2.
\]
Then, we define the function
\[
f(t,x,z,y) = \EX_\nu\left[ \EX_\kappa \theta_0(X_t^{-1}(x;\{Y_s(y)\}_{s\in[0,t]}))\EX_\kappa \theta_0(Z_t^{-1}(z;\{Y_s(y)\}_{s\in[0,t]}))\rho_0(Y_t^{-1}(y)) \right],
\]
which, via the  Feynman--Kac formula, is a solution to the so-called \emph{two-point PDE},
\[
\partial_tf + v_c(x,y)\cdot\nabla_xf + v_c(z,y)\cdot\nabla_zf = \kappa\left(\Delta_xf + \Delta_zf\right) + \nu\Delta_yf,
\]
with initial configuration $f_0(x,z,y) = \theta_0(x)\theta_0(z)\rho_0(y)$. This argument is elaborated with detail in Lemma \ref{lemma41} as well as in the Appendix \ref{s:feyman-kac}. It is clear that the initial datum $f_0$ is mean-zero with respect to $(x,z)$ because we choose $\theta_0$ mean-zero.

For the nondiffusive case, $\kappa=0$, the two-point PDE corresponds to a \emph{hypoelliptic} equation with transport in the directions $x$ and $z$ and with  diffusion only in the direction of the noise variable $y$. A suitable decay of the solutions to this PDE will imply the desired decay in \eqref{Borel--Cantelli}, and hence it will lead to the results stated in Theorems \ref{Tk=0} and \ref{Tk>0}.

Let us introduce some more convenient notation at this point, that will make the statements and proofs easier to read. We define $x=(x_1,x_2,x_3,x_4)\in\T^4$, $y=(y_1,y_2)\in\T^2$, and the expectation $f = f(t,x,y)$ for the two-point process. Then  two-point PDE becomes
\begin{equation}\label{hypo}
\partial_t f  + u\cdot \grad_x f = \kappa \laplace_x f +\nu \laplace_y f,   
\end{equation}
where now 
\begin{equation}\label{69}
u(x,y) = \begin{pmatrix}
    v_c(x_1-y_1,x_2-y_2)\\ v_c(x_3-y_1,x_4-y_2)
\end{pmatrix} =\begin{pmatrix}
    \sin(x_2-y_2)\\ \sin(x_1-y_1)\\ \sin(x_4-y_2)\\ \sin(x_3-y_1)
\end{pmatrix}.
\end{equation}

When studying the two-point problem, one of the most subtle aspects to consider is the behaviour of the dynamics near the diagonal $\mathbb{D} = \{x\in\T^4: x_1=x_3, x_2=x_4\}\subset \T^4$.  Indeed, if we examine the dynamics of two particles starting at the same position (i.e., on the diagonal), they will remain at the same point in space for all times due to the uniqueness of solutions for the transport equation \eqref{eq:AD}. As a result, two particles on the diagonal can never be separated, hence this implies that the diagonal can never be mixed. The diagonal itself is a null-set. However, a main difficulty to overcome is to ensure that particles are not getting trapped in a neighbourhood of it.

What we do to address the problem with the diagonal is to measure the decay of solutions to the two-point problem \eqref{hypo} in some weighted $H^1$ norm, that precisely degenerates in $\mathbb{D}$. 
More precisely, we consider the tailor-made norm
\begin{align*}
\|f\|_{\Ha^1 (\T^2\times \T^2\times \T^2)}  & = \|f\| +\|\grad_y f\|+  \|\partial_1 f+ \partial_3f \| +\|\partial_2f+\partial_4f\|\\
&\quad +\|\dd_{\mathbb{D}}(\partial_3f-\partial_1 f)\|+ \|\dd_{\mathbb{D}}(\partial_4f-\partial_2f)\|  + \sqrt{\kappa}\|\nabla_xf\|,
\end{align*}
where $\dd_{\mathbb{D}}$ is essentially the distance to the diagonal,
\[
\dd_{\mathbb{D}}(x) = \sqrt{\sin^2\left(\frac{x_3-x_1}2\right)+\sin^2\left(\frac{x_4-x_2}2\right)}.
\]
Here and in the following, the partial derivatives $\partial_i$ are always understood with respect to the $x$ variables, that is, $\partial_i = \partial_{x_i}$. 
The weights are vanishing precisely on the diagonal $\mathbb{D}$ at which the two-point process degenerates, and it is thus not surprising that we recover this degeneracy at the level of the PDE. Away from the diagonal, the terms that involve the $x$ derivatives yield a  full gradient control.

The desired decay of the $\Ha^1$ norm will be obtained via Villani's hypocoercivity method, see the monograph \cite{Villani09} for further information, where the definition of some suitable commutators will be the key to obtain decay in the $x$-variable.

The theorem reads a follows.

\begin{theorem}
    \label{T1} There exist $\nu_0>1$  such that given any $\nu_0<\nu$, there exist constants $\lambda=\lambda(\nu)>0$ and $C=C(\nu)\ge 1$ independent of $\kappa$, so that any solution $f$ to the two-point process problem \eqref{hypo}  with zero average in $x$ satisfies the exponential decay estimate
    \[
    \|f(t)\|_{\Ha^1(\T^2\times\T^2\times \T^2)}\le C \e^{-\lambda t}\|f_0\|_{\Ha^1(\T^2\times\T^2\times \T^2)},
    \]
    for any $t\ge 0$.
\end{theorem}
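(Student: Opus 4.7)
The plan is to implement Villani's hypocoercivity method, adapted to the cellular structure of $u$ and to the diagonal-degenerate weights built into the $\Ha^1$ norm. I write \eqref{hypo} as $\partial_t f + Bf = \kappa \laplace_x f + \nu \laplace_y f$ with $B = u\cdot\grad_x$ skew-adjoint in $L^2(\T^6)$. A plain $L^2$ estimate immediately yields the dissipation $\nu \|\grad_y f\|^2 + \kappa \|\grad_x f\|^2$, which handles the $\|\grad_y f\|$ and $\sqrt{\kappa}\|\grad_x f\|$ pieces of $\Ha^1$. The remaining $x$-derivatives have to be unlocked through iterated commutators with $B$.

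Because $u$ depends only on the differences $x_i - y_j$, a direct computation gives $[\partial_{y_1},B] = -\cos(x_1-y_1)\,\partial_2 -\cos(x_3-y_1)\,\partial_4$ and $[\partial_{y_2},B] = -\cos(x_2-y_2)\,\partial_1 - \cos(x_4-y_2)\,\partial_3$. At each point these first commutators span only a one-dimensional line inside each of $\mathrm{span}\{\partial_2,\partial_4\}$ and $\mathrm{span}\{\partial_1,\partial_3\}$, so I would introduce, as foreshadowed in the introduction, pointwise orthogonal complements such as $C_1^\perp = \cos(x_3-y_1)\,\partial_2 - \cos(x_1-y_1)\,\partial_4$. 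The coefficient differences $\cos(x_1-y_1) - \cos(x_3-y_1)$ vanish on $\mathbb{D}$ at precisely the rate $\dd_{\mathbb{D}}$, so $C_1^\perp$ is tailor-made to reproduce the weighted \emph{difference} quantities $\dd_{\mathbb{D}}(\partial_3 f - \partial_1 f)$ and $\dd_{\mathbb{D}}(\partial_4 f - \partial_2 f)$ of $\Ha^1$. Second- and third-order iterated commutators together with their orthogonal companions then recover the \emph{centre-of-mass} quantities $\partial_1+\partial_3$ and $\partial_2+\partial_4$.

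With three layers of operators $\{C_k, C_k^\perp\}_{k=1}^{3}$ in hand, I assemble a Lyapunov functional
\[
\Phi(f) = \alpha\|f\|^2 + \|\grad_y f\|^2 + \sum_{k=1}^{3}\bigl(a_k \|C_k f\|^2 + a_k^\perp \|C_k^\perp f\|^2\bigr) + 2\sum_{k=1}^{2} b_k \langle C_k f, C_{k+1} f\rangle + \kappa \|\grad_x f\|^2,
\]
with hierarchically small positive constants tuned so that $\Phi(f) \asymp \|f\|_{\Ha^1}^2$. Differentiating along \eqref{hypo}, skew-adjointness of $B$ kills the main transport contribution and leaves only commutator cross terms $\langle Cf,[C,B]f\rangle$; the $\nu\laplace_y$ and $\kappa\laplace_x$ operators produce coercive dissipation at every level plus further commutator errors. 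The constants are then chosen so that each cross term $b_k\,\ddt\langle C_k f, C_{k+1} f\rangle$ unlocks a coercive $\|C_k f\|^2$ contribution absorbing the next-level commutator error, and so that the $\nu$-dissipation dominates the remaining errors when $\nu\geq \nu_0$ is sufficiently large. This gives $\ddt\Phi \leq -2\lambda\Phi$ with $\lambda>0$ independent of $\kappa$, whence the claimed exponential decay follows by Gr\"onwall and the equivalence $\Phi \asymp \|f\|_{\Ha^1}^2$.

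I expect the principal obstacle to be reconciling two competing demands on the orthogonal complements $C_k^\perp$: they must vanish on $\mathbb{D}$ at precisely the rate $\dd_{\mathbb{D}}$ so that their $L^2$-norms match the weighted components of $\Ha^1$, yet their commutators $[C_k^\perp, B]$ together with the $\grad_y$-dissipation must still provide coercive control of all $x$-directions uniformly up to the diagonal. Making this quantitative match work---while simultaneously absorbing the $\kappa\laplace_x$-induced lower-order errors, including those that strike the degenerate weights, into the reservoir $\kappa\|\grad_x f\|^2$ uniformly in $\kappa$---is the technically delicate step, and it is what dictates how large $\nu_0$ must be taken.
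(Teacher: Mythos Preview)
Your strategy is essentially the paper's: Villani's hypocoercivity via iterated commutators, augmented by orthogonal complements, assembled into a Lyapunov functional equivalent to $\|f\|_{\Ha^1}^2$. Two concrete points need correction.

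First, the orthogonal complements. Your $C_1^\perp = \cos(x_3-y_1)\,\partial_2 - \cos(x_1-y_1)\,\partial_4$ is the pointwise perpendicular in the $(\partial_2,\partial_4)$-plane, but this is not the operator that actually arises in the energy estimate. When you differentiate $\|C_1 f\|^2$ along the flow, the $\nu\Delta_y$ contribution produces errors of the form $\langle (\partial_{y_i} C_1)\partial_{y_i} f, C_1 f\rangle$, and it is $S_1$ defined componentwise by $(S_1)_i = -\partial_{y_i}(C_1)_i$ that must be inserted into $\Phi$ so that its own dissipation $\nu\|S_1\nabla_y f\|^2$ absorbs these errors. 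One then controls $\|S_1 f\|$ by $\|C_1 f\| + \|C_1\nabla_y f\|$ via a Hardy--Poincar\'e argument in $y$ (exploiting $\partial_{y_i}(S_1)_i = (C_1)_i$), not by any purely algebraic identity. The paper carries all this out after the rotation $(\tilde x_1,\tilde x_2)=\tfrac12(x_1+x_3,x_2+x_4)$, $(\tilde x_3,\tilde x_4)=\tfrac12(x_3-x_1,x_4-x_2)$, which makes the diagonal degeneracy explicit as $\sin\tilde x_3,\sin\tilde x_4$; your $C_1^\perp$ in the original variables yields the weight $\cos^2(x_1-y_1)+\cos^2(x_3-y_1)$, which does not vanish on $\mathbb{D}$ and will not reproduce the $\Ha^1$ structure. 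Also, your mixed sum must start at $k=0$: without the term $\langle \nabla_y f, C_1 f\rangle$ you never generate $\|C_1 f\|^2$ in the dissipation, and the cascade never starts.

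Second, and more seriously, the Gr\"onwall closure is not as direct as you state. What the energy argument actually delivers is $\ddt\Phi + \tfrac12\Psi \le 0$ for a dissipation functional $\Psi$ built solely from gradient-type quantities ($\|\nabla_y f\|^2$, $\|C_k f\|^2$, $\|C_k\nabla_y f\|^2$, \ldots); it contains no $\|f\|^2$. To conclude $\ddt\Phi \le -2\lambda\Phi$ you must prove $\Phi \lesssim \Psi$, and in particular bound $\|f\|^2$ by weighted $x$-gradients that degenerate on $\mathbb{D}$. This is a genuine Hardy--Poincar\'e inequality in $x$, and it is precisely here that the zero-mean-in-$x$ hypothesis is used. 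It also relies on a hidden structural fact: in the rotated variables the two-point function inherits the identity $f(\tilde x_1+\pi,\tilde x_2,\tilde x_3+\pi,\tilde x_4)=f(\tilde x)$ (and the analogue in $\tilde x_2,\tilde x_4$), which lets one symmetrise to a $\pi$-periodic auxiliary function and apply the one-dimensional Hardy estimate $\int_{\T}(H-\bar H)^2\lesssim \int_{\T}\sin^2(x)\,(H')^2$. Without this ingredient the degenerate weight $\dd_{\mathbb{D}}$ cannot be removed and the loop does not close; this, rather than the $\kappa$-error absorption you flag, is the technically delicate step.
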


This estimate apparently implies that
\[
\|f(t)\|_{L^2(\T^2\times \T^2\times\T^2)} \le C \e^{-\lambda t} \|f_0\|_{H^1(\T^2\times \T^2\times\T^2)},
\]
for any $t\ge 0$, which will be enough for proving the announced mixing estimates for $\theta$.

Three remarks are in order.

\begin{remark}[Integrability of $\rho_0$]
As it will become apparent in Section \ref{s:mixing}, it is not really necessary that $\rho_0$ is a Lipschitz function in $\T^2$. Since $\rho(t,y) = \EX_\nu\rho_0(Y_t^{-1}(y))$ solves a heat equation, any $\rho_0$ Radon measure (including atomic measures) will become instantaneously smooth. Therefore, if we were to have initially a more singular probability measure $\rho_0$, we could start the process and consider a new initial datum $\rho(\tau,y)\in C^\infty\cap W^{1,\infty}$ in $\T^2$ for any $\tau>0$. We simply assume directly that $\rho_0$ is Lipschitz to avoid this technicality. Actually, due to the translation invariance of our periodic setting, it would be entirely enough to consider equally distributed starting points for the random shift process, that is, $\rho_0=(2\pi)^{-1}$.
\end{remark}

\begin{remark}[$\kappa$-dependence in Theorem \ref{T1}]\label{rmk:kappa-dep}
Theorem \ref{T1} is written in purpose with full generality for any $\kappa\geq 0$ to denote that even in the inviscid case $\kappa=0$, we get exponential decay of some $H^1$ norm, i.e., \eqref{hypo} is a hypocoercive PDE. If $\kappa>0$, then it is clear that due to coercivity, the $H^1$ norm will decay exponentially fast, but the exponential rate may depend on the value of $\kappa$. In Theorem \ref{T1} we can get rid of \emph{all} dependence of $\kappa\geq 0$: both the exponent $\lambda>0$ and also the constants $C$ are $\kappa$-independent.
\end{remark}

\begin{remark}[$\nu$-dependence in Theorem \ref{T1}]
Keeping track of the $\nu$-dependences in all estimates leading to Theorem \ref{T1}, it is possible to show that $\lambda \lesssim \nu^{-5}$ and $C\gtrsim \nu^6$. There are no reasons to believe that these bounds are optimal. 
\end{remark}

Theorem \ref{T1} is the technical ingredient that will imply the mixing and enhanced dissipation results that we are looking for. Without further ado, we introduce two propositions that show precisely how this implication works. On the one hand we have the mixing result.

\begin{proposition}\label{prop:mix}
If for any initial datum $f_0$ that is mean-zero in the $x$-variable the solution $f$ of equation \eqref{hypo} satisfies the estimate
\[
\|f(t)\|_{\Ha^1(\T^2\times\T^2\times \T^2)}\le C \e^{-\lambda t}\|f_0\|_{\Ha^1(\T^2\times\T^2\times \T^2)},
\]
with $\lambda>0$ and $C\ge 1$ independent of $\kappa$, then the solution $\theta$ of the passive scalar equation \eqref{eq:AD} is exponentially mixing uniformly in $\kappa$ and $\theta_0$ and with probability $1$.
\end{proposition}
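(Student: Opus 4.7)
The plan is to turn the hypothesised $\Ha^1$-decay of the two-point PDE into almost sure exponential decay of $\|\theta(t)\|_{\dot H^{-1}}$ via a second-moment estimate followed by a Chebyshev--Borel--Cantelli argument, along the lines outlined in Section \ref{s:strategy}.

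First I would take the uniform probability density $\rho_0 \equiv (2\pi)^{-2} \in W^{1,\infty}(\T^2)$ as the initial datum in the $y$-variable, and let $f$ solve \eqref{hypo} with initial datum $f_0(x,z,y) = \theta_0(x)\theta_0(z)\rho_0(y)$. Since $\theta_0$ is mean-zero, so is $f_0$ in the $x$-variable, and Theorem \ref{T1} applies. By the Feynman--Kac identification (Lemma \ref{lemma41} and Appendix \ref{s:feyman-kac}), together with the volume preservation of $Y_t$,
\[
F(t,x,z) := \EX_\nu\bigl[\theta(t,x)\theta(t,z)\bigr] = \int_{\T^2} f(t,x,z,y)\, dy.
\]
Expressing the homogeneous $\dot H^{-1}$-norm via the Green's function $G$ of $-\laplace$ on $\T^2$, which has only a logarithmic singularity and hence lies in $L^2(\T^2)$, and taking $\nu$-expectation,
\[
\EX_\nu\|\theta(t)\|_{\dot H^{-1}}^2 = \iint_{\T^2\times\T^2} G(x-z) F(t,x,z)\, dx\, dz \lesssim \|F(t)\|_{L^2(\T^4)} \lesssim \|f(t)\|_{L^2(\T^6)}
\]
by Cauchy--Schwarz and Jensen. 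Combining this with the hypothesis and with the direct bound $\|f_0\|_{\Ha^1} \lesssim \|\theta_0\|_{H^1}^2$ (which follows from the product structure of $f_0$ using $\dd_{\mathbb{D}}\le 1$ and $\sqrt\kappa\le 1$ for $\kappa\in[0,1]$) produces the a priori bound
\[
\EX_\nu\|\theta(t)\|_{\dot H^{-1}}^2 \lesssim e^{-\lambda t}\|\theta_0\|_{H^1}^2.
\]

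From here the argument proceeds in standard fashion. For any $\alpha \in (0, \lambda/2)$ and every $t \ge 0$, Chebyshev gives
\[
\Prob_\nu\bigl[\|\theta(t)\|_{\dot H^{-1}} > e^{-\alpha t}\|\theta_0\|_{H^1}\bigr] \lesssim e^{(2\alpha - \lambda)t}.
\]
Applying Borel--Cantelli on a sufficiently fine discrete grid yields an almost surely finite random constant $C(\omega)$ such that $\|\theta(t)\|_{\dot H^{-1}} \leq C(\omega) e^{-\alpha t}\|\theta_0\|_{H^1}$ for every grid time. The in-between values are handled by the PDE-based Lipschitz-in-time bound $|\partial_t\|\theta(t)\|^2_{\dot H^{-1}}| \lesssim \|v\|_{L^\infty}\|\theta_0\|_{L^2}\|\theta(t)\|_{\dot H^{-1}} + \kappa\|\theta_0\|_{L^2}^2$, which permits extending the grid decay to all $t\ge 0$ (at the mild cost of a slight reduction of the rate, e.g.~from $\lambda/2$ to $\lambda/3$ using a mesh of size $e^{-\alpha n}$ on the interval $[n,n+1]$). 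Since $\lambda$ is $\kappa$-independent by hypothesis, so is the resulting $\alpha$, giving mixing uniformly in $\kappa$.

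The main obstacle is the Feynman--Kac identification linking $F$ to $\int f\,dy$; this requires carefully interpreting the stochastic flow in \eqref{flow} when $\kappa\ge 0$, and is deferred to Appendix \ref{s:feyman-kac}. The remaining probabilistic manipulations are routine. Promoting the $\theta_0$-dependent random constant to the truly $\theta_0$-universal constant $C(\omega)$ claimed in Theorem \ref{Tk=0}, together with the moment bounds $\EX_\nu[C^q]<\infty$, would require a further Fourier decomposition of $\theta_0$ combined with higher-moment Chebyshev estimates and a weighted union bound; this step, however, requires no additional hypocoercive input beyond Theorem \ref{T1}.
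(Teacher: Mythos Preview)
Your approach is correct for what it actually proves, and it is genuinely different from the paper's. You bypass the Fourier decomposition entirely by testing the two-point function $F(t,x,z)$ directly against the Green's kernel $G(x-z)\in L^2(\T^2)$, which gives the second-moment bound $\EX_\nu\|\theta(t)\|_{\dot H^{-1}}^2\lesssim e^{-\lambda t}\|\theta_0\|_{H^1}^2$ in one stroke. This is more elementary than the paper's route, which first proves Lemma~\ref{lemma41} (a second-moment bound for a \emph{generic} test function $h$) and then, in Proposition~\ref{prop41}, runs Borel--Cantelli not on $\|\theta(n)\|_{\dot H^{-1}}$ but on every pair of Fourier modes $(e^{ik\cdot x},e^{ik'\cdot x})$, followed by a second Borel--Cantelli layer on a random cutoff $K$ in frequency.

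The price you pay is exactly the one you flag: your random constant $C(\omega)$ depends on $\theta_0$, because Borel--Cantelli is applied to the single stochastic process $t\mapsto\|\theta(t)\|_{\dot H^{-1}}$. The paper's more elaborate Fourier argument is there precisely to manufacture a single almost surely finite $\hat C(\omega)$ that works simultaneously for all $\theta_0,h\in H^s$, which is what Theorems~\ref{Tk=0} and~\ref{Tk>0} require, and to obtain the moment bounds $\EX_\nu[\hat C^q]<\infty$. So your last paragraph is not a cosmetic afterthought: it is the substantive remaining step, and carrying it out forces you back to something like the paper's mode-by-mode construction (tail bound on $N_{k,k'}$, then on the frequency cutoff $K$). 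Your continuous-time extension via a Lipschitz bound on $\partial_t\|\theta\|_{\dot H^{-1}}^2$ is workable, though the paper's device of restarting the discrete estimate from time $\tau\in[0,1)$ and using the deterministic Gronwall bound $\|\theta(\tau)\|_{\dot H^1}\lesssim\|\theta_0\|_{\dot H^1}$ is cleaner and avoids having to control the transport term in negative norms.
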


On the other hand we write a proposition to show that uniform-in-diffusivity mixing implies enhanced dissipation with optimal rate. Even though this is a well-known result, we include it here for completeness and because our estimates are finer than those we are aware of in the literature, see \cites{BBPS21,CIS24+}.

\begin{proposition}\label{prop:diss}
Let    $\kappa\in(0,1)$ be fixed and suppose  there exist two constants $\lambda>0$ and $C>1$ independent of $\kappa$, such that any mean-zero solution  $\theta$ to the passive scalar equation \eqref{eq:AD}  satisfies the exponential mixing estimate
\[
\|\theta(t)\|_{\dot H^{-1}} \le C\|\theta_0\|_{\dot H^1}\e^{-\lambda t},
\]
for any $t\ge0$. Then the flow is dissipation enhancing: There exists two  constants  $\lambda'>0$ and $C'>1$   dependent only on $\lambda$  and $C$ such that
\[
\|\theta(t)\|_{L^2} \le   \frac{C'}{\sqrt{\kappa}}  \|\theta_0\|_{L^2} \e^{-\lambda' t},
\]
for all $t\geq 0$.  Moreover, there exists a constant $\mu(\kappa)>0$ dependent on both $C$ and $\lambda$ satisfying the asymptotic 
\[
\lim_{\kappa\to 0} \log\left(\frac1{\kappa}  \right)\mu(\kappa) = 2\lambda. 
\]
such that
\[
\|\theta(t)\|_{L^2} \le \e^{-\mu(\kappa)  t} \|\theta_0\|_{L^2},
\]
for any $t\ge 0$. Furthermore, if $\EX_{\nu}[C^q]<\infty$ for any $q>0$, then there also holds $\EX_{\nu}[\e^{q/\mu(\kappa)}]<\infty$ for any $q>0$.
\end{proposition}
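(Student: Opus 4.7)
The plan rests on three ingredients: a Fourier interpolation inequality, the energy identity for \eqref{eq:AD}, and the hypothesised mixing estimate. Plancherel combined with Hölder's inequality with conjugate exponents $(1+s)/s$ and $1+s$ yields, for any mean-zero $\theta$ and $s\in(0,1]$,
\[
\|\theta\|_{L^2}^{2(1+s)}\le \|\theta\|_{\dot H^{-s}}^{2}\,\|\theta\|_{\dot H^1}^{2s}.
\]
Combining this with the energy identity $\frac{d}{dt}\|\theta\|_{L^2}^{2}=-2\kappa\|\theta\|_{\dot H^1}^{2}$ and the mixing hypothesis produces a Bernoulli-type differential inequality for $\phi(t):=\|\theta(t)\|_{L^2}^{2}$,
\[
\phi'(t)\le -\frac{2\kappa\,\e^{2\lambda t}}{C^{2}\|\theta_0\|_{\dot H^s}^{2/s}}\,\phi(t)^{(1+s)/s},
\]
which I would integrate via the substitution $\frac{d}{dt}\phi^{-1/s}$, discarding the non-negative initial-data contribution, to obtain
\[
\|\theta(t)\|_{L^2}^{2}\lesssim \Bigl(\frac{C^{2}}{\kappa}\Bigr)^{\!s}\|\theta_0\|_{\dot H^s}^{2}\,\e^{-2s\lambda t}
\]
for $t\gtrsim 1/\lambda$.

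To replace $\|\theta_0\|_{\dot H^s}$ on the right-hand side with $\|\theta_0\|_{L^2}$, I would restart the argument at a carefully chosen intermediate time. The energy identity integrated over $[0,1]$ gives $\int_0^1\kappa\|\theta(s)\|_{\dot H^1}^{2}\,ds\le \frac12\|\theta_0\|_{L^2}^{2}$, so by the mean value theorem there exists $t_1\in(0,1)$ with $\|\theta(t_1)\|_{\dot H^1}^{2}\le \|\theta_0\|_{L^2}^{2}/\kappa$. Interpolating $\dot H^s$ between $L^2$ and $\dot H^1$ then yields $\|\theta(t_1)\|_{\dot H^s}^{2}\lesssim \kappa^{-s}\|\theta_0\|_{L^2}^{2}$, and feeding this into the previous Bernoulli-type estimate (now applied from time $t_1$) produces, after absorbing the $O(1)$ shift into the constant, the first claimed bound
\[
\|\theta(t)\|_{L^2}\lesssim \Bigl(\frac{C}{\kappa}\Bigr)^{\!\rho}\|\theta_0\|_{L^2}\,\e^{-\rho\lambda' t}\qquad (\rho\in(0,1/2])
\]
with $\lambda'$ depending only on $\lambda$.

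The $\kappa$-dependent uniform rate is then obtained by optimising $\rho$ as a function of $\kappa$: the choice $\rho(\kappa)\sim 2/\log(C/\kappa)$ makes the prefactor $(C/\kappa)^{\rho}$ of order one while preserving an exponential decay $\e^{-\rho\lambda' t}$, producing an estimate $\|\theta(t)\|_{L^2}\le \e^{-\mu(\kappa)t}\|\theta_0\|_{L^2}$ with $\mu(\kappa)=\rho(\kappa)\lambda'\sim 2\lambda/\log(1/\kappa)$, which yields the announced asymptotic. The probabilistic conclusion is then immediate: from $1/\mu(\kappa)\lesssim \log(C/\kappa)/(2\lambda)$ one has $\e^{q/\mu(\kappa)}\lesssim (C/\kappa)^{q/(2\lambda)}$, so the assumption $\EX_\nu[C^{q}]<\infty$ transfers directly to $\EX_\nu[\e^{q/\mu(\kappa)}]<\infty$ for every $q>0$.

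The most delicate step is the tracking of constants through the Bernoulli integration and the restart, so that the precise asymptotic $\log(1/\kappa)\mu(\kappa)\to 2\lambda$ (and not some other multiple of $\lambda$) survives. In particular, the choice of the intermediate time $t_1$ and the amount of enhancement lost when converting $\|\theta_0\|_{\dot H^s}$ to $\|\theta_0\|_{L^2}$ must be balanced carefully so as not to inflate the effective rate.
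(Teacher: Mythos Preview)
Your Bernoulli-inequality approach to the first estimate is correct and takes a genuinely different route from the paper. The paper does not derive a differential inequality for $\|\theta\|_{L^2}^2$; instead it works with snapshots: it uses the mean-value theorem on $[0,\tau]$ to select $t'$ with $\|\nabla\theta(t')\|_{L^2}^2\le \|\theta_0\|_{L^2}^2/(2\kappa\tau)$, applies the interpolation $\|\theta(t)\|_{L^2}\le \|\theta(t)\|_{\dot H^{-s}}^{1/(1+s)}\|\nabla\theta(t)\|_{L^2}^{s/(1+s)}$ together with the mixing bound started from $t'$, and then \emph{integrates in $t$ over $(2\tau,3\tau)$}, using H\"older in time and the energy inequality a second time to control the surviving $\|\nabla\theta(t)\|_{L^2}$ factor. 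This produces $\rho=s/(1+s)\in(0,1/2]$ with $\lambda'=\lambda/3$. Your ODE route is cleaner and, as you carry it out, actually delivers $\rho=s\in(0,1]$ with $\lambda'$ essentially equal to $\lambda$; you are underselling your own argument by restricting to $\rho\in(0,1/2]$.

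The second estimate, however, has a real gap. Choosing $\rho(\kappa)\sim 2/\log(C/\kappa)$ makes the prefactor $(C/\kappa)^{\rho}$ bounded, say equal to $e^2$, but not $\le 1$; so what you obtain is only $\|\theta(t)\|_{L^2}\le e^2\,e^{-\mu(\kappa)t}\|\theta_0\|_{L^2}$, and that constant cannot be absorbed into the exponential uniformly for all $t\ge 0$ (at $t=0$ the two sides coincide). The paper closes this with an iteration that exploits the semigroup structure of the hypothesis: fix $\rho=1/2$, choose $t_*$ so that $(C/\kappa)^{1/2}e^{-\lambda' t_*/2}=e^{-1}$, conclude $\|\theta(t_*)\|_{L^2}\le e^{-1}\|\theta_0\|_{L^2}$, and then apply the same estimate repeatedly to $\theta(nt_*)$ together with the $L^2$-monotonicity to get $\|\theta(t)\|_{L^2}\le e^{-n}\|\theta_0\|_{L^2}$ for $t\in[nt_*,(n+1)t_*)$. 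This iteration is the missing ingredient; once you insert it after your first estimate, your sharper $\lambda'\approx\lambda$ in fact yields the stated asymptotic $\log(1/\kappa)\,\mu(\kappa)\to 2\lambda$ on the nose. Your probabilistic conclusion is fine and matches the paper's.
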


Even if there is no reference to the dependence on the noise realisation stated in the proposition, it will be clear from the proof in Section \ref{s:mixing} that if the mixing estimate holds with probability $1$, then also the enhanced dissipation estimate does.

\section{The hypocoercivity estimate}\label{s:hypocoercivity}

Our main goal in the present section is the derivation of the hypocoercivity estimate that yields the exponential decay of the expectation, see Theorem \ref{T1}.

When studying this problem, it is convenient to perform the following spiral rotation of variables
\[
\tilde x_1 = \frac{ x_1+  x_3}2,\quad \tilde x_2 = \frac{x_2+x_4}2,\quad \tilde x_3= \frac{x_3-x_1}2,\quad \tilde x_4=\frac{x_4-x_2}2,
\]
and we write $\tilde y_1=-y_2$ and $\tilde y_2=-y_1$ because it will lead to more natural expressions in this analysis that follows. We then set
 \[
 \tilde f(\tilde x,\tilde y) = f(x,y) = f(\tilde x_1-\tilde x_3,\tilde x_2-\tilde x_4,\tilde x_3+\tilde x_1,\tilde x_4+\tilde x_2,-\tilde y_2,-\tilde y_1)
 .
 \]
  This change of variables leaves the $2\pi$-periodicity invariant. The transformed equation reads
\begin{equation}
    \label{18}
\partial_t \tilde f + \tilde u\cdot \grad_{\tilde x} \tilde f = \nu \Delta_y \tilde f + \tilde \kappa  \laplace_{\tilde x}\tilde  f,
\end{equation}
where $\tilde \kappa=\kappa/4$ and 
\begin{equation}
    \label{33}
\tilde u(\tilde x,y) = \begin{pmatrix}
\cos(\tilde x_4)\sin(\tilde x_2+\tilde y_1)\\
\cos(\tilde x_3)\sin(\tilde x_1+\tilde y_2)\\
\sin(\tilde x_4)\cos(\tilde x_2+\tilde y_1)\\
\sin(\tilde x_3)\cos(\tilde x_1+\tilde y_2)
\end{pmatrix} = \begin{pmatrix}
\co_4\si_2 \\ \co_3 \si_1\\ \si_4\co_2\\ \si_3\co_1
\end{pmatrix},
\end{equation}
where we use the short-hand notation $\si_1 = \sin(\tilde x_1 + \tilde y_2)$, $\si_2 = \sin(\tilde x_2 +\tilde  y_1)$, $\co_1 = \cos(\tilde x_1+\tilde y_2)$, $\co_2 = \cos(\tilde x_2+\tilde y_1)$, $\si_{3} = \sin(\tilde x_3)$, $\si_{4} = \sin(  \tilde x_4 )$, $\co_3 = \cos(\tilde x_3)$, $\co_4 = \cos(\tilde x_4)$. By this change of variables, the diagonal $x_1=x_3\cong x_3-2\pi$ and $x_2 = x_4\cong x_4-2\pi$ is mapped onto $\tilde x_3=0,\pi$ and $\tilde x_4 = 0,\pi$. The degeneracy at $\tilde x_3,\tilde x_4 = \pm\frac{\pi}2$ corresponds to points of maximal distance $x_3-x_1=x_4-x_2=\pi$, where the vector field in \eqref{69} degenerates because, for instance, $\sin(x_3-y_1) = -\sin(x_1-y_1)$.

For further reference, we notice that the $2\pi$-periodicity of $f$ in each variable and our particular change of variables implies that
\[
\tilde f(\tilde x_1+\pi,\tilde x_2,\tilde x_3+\pi,\tilde x_4) = \tilde f(\tilde x_1,\tilde x_2,\tilde x_3,\tilde x_4),
\]
Therefore, 
\begin{equation}
    \label{50}
\tilde F_3(\tilde x_1,\tilde x_2,\tilde x_3,\tilde x_4) := \frac12 \left(f(\tilde x_1 ,\tilde x_2,\tilde x_3,\tilde x_4) + f(\tilde x_1+\pi,\tilde x_2,\tilde x_3,\tilde x_4)\right),
\end{equation}
defines a function that is $\pi$-periodic in $\tilde x_3$. Analogously, we may define $\tilde F_4$ a periodic function in $\tilde x_4$.

We finally remark that under the above change of variables, the $\Ha^1$ norm gets  a bit handier
\[
\|\tilde f\|_{\Ha^1} = \|\tilde f\| + \|\tilde \partial_1\tilde f\| + \|\tilde \partial_2\tilde f\| + \|\widetilde{\mathrm{d}}_{\mathbb{D}}\tilde \partial_3 \tilde f\| + \|\widetilde{\mathrm{d}}_{\mathbb{D}}\tilde\partial_4 \tilde f\|  + \sqrt{\tilde \kappa}\|\grad_{ \tilde x}\tilde f\|+ \|\grad_{\tilde y} \tilde f\|,
\]
where now 
\[
\widetilde{\mathrm{d}}_{\mathbb{D}} = \sqrt{\si_3^2+\si_4^2}
\] 
is essentially the distance to the planes of degeneracies.
To simplify the notation, we will drop the tildes in the following. 

\medskip

We will obtain the estimate in Theorem \ref{T1} from the decay properties of a suitably modified Lyapunov functional in the spirit of Villani's hypocoercivity approach. The functional is generated by successively incorporating commutator terms $C_i$, which are iteratively derived by commutation with the advection operator $u\cdot\grad_x$,  starting with $C_0=\grad_y$. It is given by
\begin{equation}\label{energy}
\begin{split}
    \Phi(f) & = \frac{1}{2}\|f\|^2 + \frac{\alpha_0}{2}\|\nabla_yf\|^2 + \frac{\kappa\delta}{2}\|\nabla_xf\|^2 + \beta_0\langle \nabla_yf,C_1f\rangle + \frac{\alpha_1}{2}\|C_1f\|^2 + \frac{\gamma_1}{2}\|S_1f\|^2 \\
    & \quad +\beta_1\langle C_1f,C_2f\rangle +  \frac{\alpha_2}{2}\|C_2f\|^2 + \frac{\gamma_2}2 \|S_2f\|^2 + \beta_2 \langle C_2f,C_3f\rangle + \frac{\alpha_3}2\|\grad_x'f\|^2 .
\end{split}
\end{equation}
Here, we have written
\[
\grad_x' = (\partial_1,\partial_2)^T
\]
in order to simplify the notation.
The precise definition of our commutator terms $C_i$ and their \emph{orthogonal complements} $S_i$ (which are actually further commutators in the Hörmander sense), which will play a key role in our analysis and which are not included in Villani's original work, will be postponed to Subsection \ref{SS3}. Here, $\grad_x'$ is a simplification of the third commutator term $C_3$, which we will also introduce only later. First, we will   discuss the functional inequalities we have to establish.

The choice of coefficients in $\Phi(f)$ is subtle. Most of them are related via certain conditions that are required in order to control error terms. The compatibility of those has be checked eventually, which is the content of the subsequent lemma. The reader who is familiar with the hypocoercivity method will notice that most of the conditions are identical to those in Villani's original work. Those for $\gamma_i$ are new because the orthogonal complements $S_i$ were not considered so far in the literature.

For a better readability, we write $\alpha\ll_{\eps} \beta$ for $\alpha \le \eps \beta$ in the sequel. 

\begin{lemma}
    \label{L11}
For any $\eps\in(0,1)$, there exist positive constants $\nu$, $\alpha_0$, $\beta_0$, $\alpha_1$, $\gamma_1$, $\beta_1$, $\alpha_2$, $\gamma_2$, $\beta_2$, and $\alpha_3$ solving the conditions 
\begin{equation}\label{22}
\nu^5\beta_2\ll_{\eps}\nu^6\alpha_3\ll_{\eps}\nu^4\gamma_2\ll_{\eps}\nu^4 \alpha_2 \ll_{\eps}\nu^3 \beta_1\ll_{\eps}\nu^2\gamma_1\ll_{\eps}\nu^2 \alpha_1\ll_{\eps}\nu\beta_0\ll_{\eps}\alpha_0\ll_{\eps}1\ll_{\eps}\nu,
\end{equation}
and 
\begin{gather*}
\alpha_0^2\ll_{\eps}\nu\beta_0,\quad \beta_0^2 \ll_{\eps} \nu \beta_1, \\
\alpha_1^2\ll_{\eps}\beta_0\beta_1,\quad \alpha_2^2 \ll_{\eps}\beta_1\beta_2,\\
\beta_0^2 \ll_{\eps} \alpha_0\alpha_1,\quad \beta_1^2\ll_{\eps}\alpha_1\alpha_2,\quad \beta_2^2\ll_{\eps}\alpha_2 \alpha_3,\quad 
\beta_1^2\ll_{\eps}\beta_0\beta_2, \\
\nu \alpha_1^2\ll_{\eps}\beta_0\gamma_1,\quad 
\gamma_1^2\ll_{\eps} \nu^2 \alpha_1\alpha_2 ,\quad \gamma_2^2 \ll_{\eps}\nu \alpha_2 \beta_2.
\end{gather*}
The coefficients can be all chosen so that in \eqref{22} the scaling in $\nu$ is saturated, that is,
\begin{equation}
    \label{23}
\alpha_i\sim \gamma_i \sim \nu^{-2i},\quad \beta_i\sim \nu^{-2i-1}.
\end{equation}
\end{lemma}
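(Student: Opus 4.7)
The plan is to verify the proposed scaling \eqref{23} by substitution and then translate the problem into an elementary system of linear inequalities. Writing $\alpha_i = A_i \nu^{-2i}$, $\gamma_i = G_i \nu^{-2i}$, $\beta_i = B_i \nu^{-2i-1}$ for $\nu$-independent positive capitals, every inequality appearing in \eqref{22} and in the secondary list sees all explicit powers of $\nu$ cancel. The chain \eqref{22} collapses to the $\nu$-free statement
\[
B_2 \ll_\eps A_3 \ll_\eps G_2 \ll_\eps A_2 \ll_\eps B_1 \ll_\eps G_1 \ll_\eps A_1 \ll_\eps B_0 \ll_\eps A_0 \ll_\eps 1,
\]
together with the lone condition $1 \ll_\eps \nu$ that eventually fixes the threshold $\nu_0$. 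Each secondary constraint becomes a $\nu$-free relation among the capitals, for instance $A_0^2 \ll_\eps B_0$, $B_0^2 \ll_\eps B_1$, $A_1^2 \ll_\eps B_0 B_1$, $B_1^2 \ll_\eps B_0 B_2$, $G_1^2 \ll_\eps A_1 A_2$, and so on. A first consistency check of the proof is that the $\nu$-exponents on the two sides of each of the eleven secondary inequalities do match exactly; this matching is precisely what \emph{forces} the saturated scaling \eqref{23}.

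The remaining $\nu$-free problem is combinatorial. I would parameterize each capital as a power of $\eps$, writing $A_i = \eps^{a_i}$, $B_i = \eps^{b_i}$, $G_i = \eps^{g_i}$ with positive real exponents, so that each inequality $\eps^p \ll_\eps \eps^q$ is just $p \ge q + 1$. Ordering the nine exponents along the chain as $(y_0, y_1, \ldots, y_8) := (a_0, b_0, a_1, g_1, b_1, a_2, g_2, a_3, b_2)$, the primary chain becomes the slope condition $y_{i+1} - y_i \ge 1$, while the secondary list turns into discrete concavity-with-slack relations of the type $2y_i \ge y_j + y_k + 1$ for particular triples. Working with the increments $D_i := y_{i+1} - y_i$, several of these constraints take the backward-recursive form $D_i \ge D_{i+1} + D_{i+2} + 1$ and can be solved from right to left: set $D_6, D_7 = 1$, propagate lower bounds onto $D_5$, $D_4$, $D_2$, $D_1$, $D_0$ in turn, and finally pick $y_0$ large enough to handle the boundary condition coming from $A_0^2 \ll_\eps B_0$.

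The main obstacle is not conceptual but book-keeping: several secondary inequalities couple positions that are far apart in the chain (the worst cases being $\beta_1^2 \ll_\eps \beta_0 \beta_2$, which jumps from $y_1$ to $y_8$ via $y_4$, and the relations involving the new coefficients $\gamma_i$), which prevents any simple closed-form ansatz such as a quadratic or a geometric sequence in $i$ from satisfying every constraint simultaneously. One therefore has to navigate a finite but non-trivial polytope of admissible exponents, verifying each of the eleven secondary items against a candidate tuple. Since the system is linear and of small size, feasibility is clear and an explicit admissible nine-tuple of positive integer exponents can be written down in a few lines. Once such a tuple is fixed, one picks $\eps$ small enough that all strict inequalities hold with some slack, and sets $\nu_0 = \eps^{-1}$ so that $1 \le \eps \nu$ for all $\nu \ge \nu_0$; the scaling \eqref{23} then delivers a complete set of coefficients meeting every hypothesis of the lemma.
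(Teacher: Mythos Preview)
Your approach is essentially identical to the paper's: both parametrize the coefficients by the saturated scaling $\alpha_i=\eps^{x_i}\nu^{-2i}$, $\beta_i=\eps^{y_i}\nu^{-2i-1}$, $\gamma_i=\eps^{z_i}\nu^{-2i}$, observe that all powers of $\nu$ cancel, and reduce the problem to a finite linear system of inequalities in the exponents. The only substantive difference is that the paper closes the argument by exhibiting an explicit integer solution (e.g.\ $x_0=256$, $y_0=384$, $x_1=448$, $z_1=472$, $y_1=480$, $x_2=488$, $z_2=491$, $x_3=492$, $y_2=493$), whereas you only assert feasibility and sketch a backward-recursion strategy without writing down a concrete tuple; producing one would complete your proof.
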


We provide the proof of this  lemma in Subsection \ref{SS1} below.

Our hypocoercivity estimate in Theorem \ref{T1} will be a consequence of the corresponding Gronwall inequality for $\Phi(f)$. In order to establish that inequality, a major effort will be necessary in order to ensure that the functional which contains all individual terms that are dissipated by $\Phi(f)$ does also control the error terms that occur when differentiating the Lyapunov functional $\Phi(f)$ in time. This dissipation functional is given by
\begin{equation}\label{diss}
\begin{split}
    \Psi(f)  & = \nu \|\grad_y f\|^2 + \nu\alpha_0 \|\laplace_y f\|^2 + \beta_0 \|C_1f\|^2 + \nu \alpha_1 \|C_1\grad_y f\|^2 + \nu \gamma_1 \|S_1 \grad_y f\|^2 \\
    & \quad + \beta_1 \|C_2f\|^2 +\nu \alpha_2 \|C_2 \grad_y f\|^2 + \nu \gamma_2 \|S_2\grad_y f\|^2 + \beta_2 \|C_3 f\|^2 + \nu \alpha_3 \|\nabla_x'\grad_y f\|^2 \\
    & \quad + \kappa \left[ \|\nabla_xf\|^2 + \alpha_0\|\nabla_{x}\grad_yf\|^2 + \kappa\delta\|\laplace_x f\|^2 + \alpha_1\|C_1\grad_x f\|^2 + \gamma_1\|S_1\grad_x f\|^2 \right. \\
    & \qquad \left. + \alpha_2\|C_2\grad_x f\|^2 + \gamma_2\|S_2\grad_x f\|^2 + \alpha_3\|\grad_x'\grad_x f\|^2 \right].
\end{split}
\end{equation}

In the following, we will outline how we derive the bound of Theorem \ref{T1} using the hypocoercivity method. 
Our first main objective is to ensure that $\Psi(f)$ does indeed control all error terms that occur when differentiating $\Phi(f)$. 

\begin{proposition}\label{P1}
    Let $\eps\in(0,1)$ be given and choose   $\alpha_0$, $\beta_0$, $\alpha_1$, $\gamma_1$, $\beta_1$, $\alpha_2$, $\gamma_2$, $\beta_2$, and $\alpha_3$ as in Lemma \ref{L11} and $\delta=\eps$. If $\eps$ is sufficiently small, then $\Phi(f)$ satisfies the differential inequality
    \[
\frac{\dd}{\dd t} \Phi(f) + \frac12\Psi(f) \le 0.
\]
\end{proposition}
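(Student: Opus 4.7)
The plan is to differentiate $\Phi(f)$ along solutions of \eqref{18}, identify the principal coercive contributions as the entries of $\Psi(f)$, and absorb all remaining error terms into $\tfrac12\Psi(f)$ by invoking the hierarchical smallness relations of Lemma \ref{L11}.

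For each squared norm $\tfrac12\|Pf\|^2$ in $\Phi(f)$, where $P$ ranges over $\{\mathrm{Id},\grad_y,\sqrt{\kappa}\grad_x,C_1,S_1,C_2,S_2,\grad_x'\}$, the evolution equation \eqref{18}, the constant-coefficient character of $P$, and the incompressibility of $u$ together yield
\[
\ddt\tfrac12\|Pf\|^2 = -\nu\|\grad_y Pf\|^2 - \kappa\|\grad_x Pf\|^2 - \langle Pf,[P,u\cdot\grad_x]f\rangle.
\]
The first two pieces furnish the diagonal coercive terms of $\Psi(f)$, while the commutator bracket contributes an indefinite-sign cross term of the shape $\langle C_i f, C_{i+1}f\rangle$ (plus remainders involving $S_{i+1}$ and lower-order operators) that is treated as an error. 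The cross terms $\beta_0\langle\grad_y f,C_1 f\rangle$, $\beta_1\langle C_1 f,C_2 f\rangle$, $\beta_2\langle C_2 f,C_3 f\rangle$ in $\Phi(f)$ are included precisely to handle this: differentiating in time, the advective part cancels by incompressibility against its own transpose and leaves the leading contribution $-\beta_i\|C_{i+1}f\|^2$. It is this gain that supplies the positive $\beta_i\|C_{i+1}f\|^2$ entries of $\Psi(f)$, controlling the next commutator level.

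All remaining contributions are errors, which come in three flavours: lower-order pieces of the commutators $[C_i,u\cdot\grad_x]$ and $[S_i,u\cdot\grad_x]$; $\nu$- and $\kappa$-proportional mixed terms of type $\nu\beta_i\langle\grad_y C_i f,\grad_y C_{i+1}f\rangle$ originating from the diffusive halves of the evolution acting on the cross terms; and $S_i$--$C_j$ couplings generated by the fact that $[C_i,u\cdot\grad_x]$ has a non-zero $S_{i+1}$-component. Each is bounded by Cauchy--Schwarz and Young's inequality, the free weight being chosen so that one factor is absorbed by an entry of $\Psi(f)$ --- namely $\nu\alpha_i\|C_i\grad_y f\|^2$, $\nu\gamma_i\|S_i\grad_y f\|^2$, $\beta_i\|C_{i+1}f\|^2$, or, at the bottom of the hierarchy, $\nu\|\grad_y f\|^2$ (invoking the Poincar\'e inequality in $y$ when the relevant factor is $y$-mean-zero). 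Matching prefactors produces exactly the list of algebraic conditions in Lemma \ref{L11}, and the scaling \eqref{23} ensures their consistency. The $\kappa$-tail of $\Psi(f)$ is handled in parallel by exploiting the auxiliary term $\tfrac{\kappa\delta}{2}\|\grad_x f\|^2$ in $\Phi(f)$, whose differentiation produces the extra gain $\kappa\delta\|\grad_y\grad_x f\|^2+\kappa^2\delta\|\laplace_x f\|^2$ needed to swallow the $\kappa$-proportional commutator errors; the prescription $\delta=\eps$ keeps all smallness relations homogeneous.

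The principal difficulty is the systematic propagation of the orthogonal-complement operators $S_i$, which do not appear in Villani's original construction. Since $[C_i,u\cdot\grad_x]$ is not purely in the span of $C_{i+1}$ but also contains an $S_{i+1}$-component, and since $[S_i,u\cdot\grad_x]$ itself generates non-trivial contributions that must be tracked up the chain, the $S_i$-norms have to be propagated simultaneously with the $C_i$-norms. The three extra conditions $\nu\alpha_1^2\ll_{\eps}\beta_0\gamma_1$, $\gamma_1^2\ll_{\eps}\nu^2\alpha_1\alpha_2$, and $\gamma_2^2\ll_{\eps}\nu\alpha_2\beta_2$ in Lemma \ref{L11} are tailored precisely to close the estimates on these mixed $S$--$C$ errors. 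Once Lemma \ref{L11} furnishes coefficients satisfying all relations simultaneously, summing the bounds for the individual terms of $\Phi(f)$ and choosing $\eps>0$ small enough yields $\ddt\Phi(f)+\tfrac12\Psi(f)\le0$.
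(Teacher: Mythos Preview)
Your high-level strategy is right, but there is a concrete error that undermines the argument. You assert that the operators $P\in\{C_1,S_1,C_2,S_2\}$ have ``constant-coefficient character'' and thence derive
\[
\ddt\tfrac12\|Pf\|^2 = -\nu\|\grad_y Pf\|^2 - \kappa\|\grad_x Pf\|^2 - \langle Pf,[P,u\cdot\grad_x]f\rangle.
\]
This is false: the coefficients of $C_i$ and $S_i$ depend on both $x$ and $y$ (they are products of sines and cosines in $x_j$ and $x_k+y_\ell$), so $[\grad_y,C_i]$ and $[\grad_x,C_i]$ are nonzero. In the paper's computation for the $\alpha_1$-term, for instance, the correct identity carries the additional error $-2\nu\langle(\grad_y C_1)\grad_y f,C_1 f\rangle$ (and analogous $\kappa$-terms), and since $\partial_{y_i}C_1$ is essentially $\pm S_1$, this term is bounded using $\nu\gamma_1\|S_1\grad_y f\|^2$ from $\Psi$. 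It is precisely this diffusive commutator $[\grad_y,C_i]\sim S_i$ that forces the inclusion of the orthogonal complements in both $\Phi$ and $\Psi$; the condition $\nu\alpha_1^2\ll_\eps\beta_0\gamma_1$ you cite arises exactly here, not from any $S_{i+1}$-component of $[C_i,u\cdot\grad_x]$. Indeed, by Lemma~\ref{L2}, $[C_1,u\cdot\grad_x]f=C_2f+R_2f$ with $R_2$ controlled by $C_1$ and $S_1$ alone, so no $S_2$ appears there.

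The same omission affects your treatment of the cross terms $\beta_i\langle C_i f,C_{i+1}f\rangle$: the diffusive contributions do not reduce to a clean $-\nu\beta_i\langle\grad_y C_i f,\grad_y C_{i+1}f\rangle$ but produce several further pieces (using $\laplace_y C_i=-C_i$ and similar identities), each of which must be estimated separately. Without these terms accounted for, the bookkeeping that matches error prefactors to the conditions of Lemma~\ref{L11} is incomplete, and the proof does not close.
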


The proof of this proposition is the content of Subsection \ref{SS2}.

In order to turn this differential inequality into a Gronwall inequality, we need the following bound, at the heart of which lies a  Poincar\'e--Hardy type inequality.

\begin{lemma}
    \label{L12}
    For any  $\alpha_0$, $\beta_0$, $\alpha_1$, $\gamma_1$, $\beta_1$, $\alpha_2$, $\gamma_2$, $\beta_2$, and $\alpha_3$ fixed as in Lemma \ref{L11} and satisfying \eqref{23}, and $\delta=\eps$, there exists a constant $C>0$ such that
    \[
    \Phi(f)\le C\nu^5 \Psi(f),
    \]
    for any function $f$ for which $\Psi(f)$ is finite.
\end{lemma}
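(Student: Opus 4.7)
The plan is to match each squared term appearing in $\Phi(f)$ against a dissipation term in $\Psi(f)$, using the scalings $\alpha_i\sim\gamma_i\sim\nu^{-2i}$ and $\beta_i\sim\nu^{-2i-1}$ provided by \eqref{23}. The factor $\nu^5$ in the conclusion is sharp and arises from controlling $\|f\|^2$ against the slowest-scaling dissipation $\beta_2\|C_3 f\|^2\sim \nu^{-5}\|C_3 f\|^2$; all the other terms contribute only $O(\nu^{-1})\Psi(f)$, or even just $O(1)\Psi(f)$.

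\textbf{Cross terms.} I would first dispose of the cross terms $\beta_i\langle C_if,C_{i+1}f\rangle$ ($i=0,1,2$, with the convention $C_0=\grad_y$) via Young's inequality: using the compatibilities $\beta_i^2\ll_{\eps}\alpha_i\alpha_{i+1}$ from Lemma \ref{L11}, each cross term is absorbed into the adjacent quadratic pieces $\alpha_i\|C_if\|^2/2$ and $\alpha_{i+1}\|C_{i+1}f\|^2/2$. The one exception is the remainder of the last cross term, which leaves a small multiple of $\|C_3f\|^2$ with coefficient $\beta_2^2/\alpha_2\sim\nu^{-6}$; this is directly dominated by $\nu^{-1}\beta_2\|C_3f\|^2\le \nu^{-1}\Psi(f)$. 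In this way, the estimate reduces to bounding
\[
\|f\|^2 + \alpha_0\|\grad_y f\|^2 + \kappa\delta\|\grad_x f\|^2 + \sum_{i=1}^{2}\bigl(\alpha_i\|C_if\|^2+\gamma_i\|S_if\|^2\bigr) + \alpha_3\|\grad_x' f\|^2
\]
by $C\nu^5\Psi(f)$.

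\textbf{Easy term-by-term bounds.} Using \eqref{23}, each term other than $\|f\|^2$ is bounded by writing it as (energy coefficient)/(dissipation coefficient)\,$\cdot$\,(dissipation piece in $\Psi$):
\[
\alpha_0\|\grad_y f\|^2\le \tfrac{\alpha_0}{\nu}\cdot\nu\|\grad_y f\|^2\lesssim \nu^{-1}\Psi(f),\qquad \alpha_i\|C_if\|^2\le \tfrac{\alpha_i}{\beta_{i-1}}\cdot\beta_{i-1}\|C_if\|^2\lesssim \nu^{-1}\Psi(f)
\]
for $i=1,2$, and analogously $\alpha_3\|\grad_x'f\|^2\lesssim \nu^{-1}\Psi(f)$ via $\beta_2\|C_3f\|^2\le \Psi(f)$; the diffusive contribution satisfies $\kappa\delta\|\grad_x f\|^2\le \delta\,\Psi(f)$. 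For the complement terms $\gamma_i\|S_if\|^2$, I apply Poincar\'e in $y$ on $\T^2$ and decompose $\grad_y S_if = S_i\grad_y f + [\grad_y, S_i]f$; the commutator $[\grad_y,S_i]f$ has the algebraic structure of $C_if$ (because $\grad_y$ acts on the trigonometric coefficients of $S_i$ as the rotation defining the pair $(C_i,S_i)$) and is therefore already under control, while the main part matches $\nu\gamma_i\|S_i\grad_y f\|^2\le \Psi(f)$; altogether $\gamma_i\|S_if\|^2\lesssim \nu^{-1}\Psi(f)$.

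\textbf{The main term and the main obstacle.} The remaining, and sharpest, bound is for $\|f\|^2$. Here I use the zero $x$-mean of $f$ (preserved by \eqref{hypo}) together with the H\"ormander-type spanning of the $x$-directions by the family $(\grad_y,C_1,S_1,C_2,S_2,C_3)$: the center-of-mass derivatives $\partial_1 f,\partial_2 f$ are directly controlled by $C_3f=\grad_x'f$, while the relative derivatives $\partial_3f,\partial_4f$ appear in the pairs $(C_1f,S_1f)$ and $(C_2f,S_2f)$ with trigonometric coefficients that degenerate only on the set $\widetilde d_{\mathbb{D}}=0$. A Poincar\'e--Hardy type inequality handles this degeneracy, trading losses of a power of $\widetilde d_{\mathbb{D}}$ against lower-order tails absorbed by the other terms, and yields
\[
\|f\|^2\lesssim \|\grad_y f\|^2+\sum_{i=1}^{2}\bigl(\|C_if\|^2+\|S_if\|^2\bigr)+\|C_3 f\|^2.
\]
The slowest of the matching dissipations is $\beta_2\|C_3 f\|^2\sim \nu^{-5}\|C_3 f\|^2$, hence $\|f\|^2\lesssim \nu^5\Psi(f)$ and the announced bound $\Phi(f)\le C\nu^5\Psi(f)$ follows by combining all contributions. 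The main obstacle is precisely this Poincar\'e--Hardy step: one must quantify how the joint non-degeneracy of $(C_1,S_1,C_2,S_2,C_3,\grad_y)$ after $y$-averaging recovers full control of $\|f\|^2$ despite the vanishing of the trigonometric coefficients on $\widetilde d_{\mathbb{D}}=0$, uniformly in $\kappa\ge0$.
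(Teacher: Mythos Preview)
Your overall architecture matches the paper's: drop the cross terms via $\beta_i^2\ll_\eps\alpha_i\alpha_{i+1}$, bound every quadratic piece except $\|f\|^2$ by $O(\nu^{-1})\Psi(f)$ using the scalings \eqref{23}, and then spend the full $\nu^5$ on a Poincar\'e--Hardy estimate for $\|f\|^2$. Your treatment of $\gamma_i\|S_if\|^2$ via Poincar\'e in $y$ plus $[\grad_y,S_i]\sim C_i$ is a legitimate variant of what the paper does (it instead invokes the identities $\|S_1f\|^2\lesssim\|C_1f\|^2+\|C_1\grad_yf\|^2$ and the analogue for $S_2$ from Lemmas~\ref{L9} and~\ref{lemma:S2-bound}).

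There are, however, two genuine inaccuracies. First, $C_3f\neq\grad_x'f$: the operator $C_3$ carries degenerating trigonometric coefficients (cf.\ Lemma~\ref{L3}), and passing from $\|C_3f\|$ to $\|\grad_x'f\|$ requires the $y$-averaged Hardy--Poincar\'e of Lemma~\ref{L5}, which brings in $\|\grad_x'\grad_yf\|^2$, not merely $\|\grad_yf\|^2$. So your displayed inequality for $\|f\|^2$ is not correct as written; the right form is $\|f\|\lesssim\|\grad_x'f\|+\|\si_3\partial_3 f\|+\|\si_4\partial_4 f\|$, followed by the separate control $\beta_2\|\grad_x'f\|^2\lesssim\Psi(f)$.

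Second, and more seriously, the Poincar\'e--Hardy step you flag as ``the main obstacle'' does not go through on general grounds. The $1$D inequality $\int_{\T}(H-\bar H)^2\,\dd x\lesssim\int_{\T}\sin^2(x)(H')^2\,\dd x$ is \emph{false} for $2\pi$-periodic $H$ (take $H\approx\pm1$ on $(0,\pi)$ and $(\pi,2\pi)$, smoothed near $0,\pi$); it holds only for $\pi$-periodic $H$. The paper exploits precisely this: under the spiral change of variables, the original $2\pi$-periodicity yields $\tilde f(\tilde x_1+\pi,\tilde x_2,\tilde x_3+\pi,\tilde x_4)=\tilde f(\tilde x)$, so the symmetrized functions $G_3,G_4$ of Lemma~\ref{L13} are $\pi$-periodic in $x_3,x_4$ respectively, and the $1$D Hardy applies after a telescoping reduction. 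Without identifying this hidden periodicity, the step cannot be closed.
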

We refer again to Subsection \ref{SS1} for a proof of this lemma.

Assuming \eqref{23} is actually not a requirement for establishing a lower bound of $\Psi(f)$ in terms of $\Phi(f)$. However, it allows us  identifying the scaling in $\nu$, which we make explicit here but which we will not discuss further.

Combining the bound from Lemma \ref{L12} with the differential inequality in Proposition \ref{P1} yields exponential decay of our Lyapunov functional,
\[
\Phi(f(t))\le \e^{-\lambda t}\Phi(f_0),
\]
for any $t\ge0$. The small constant $\lambda$ is of the order $\nu^{-5}\ll1 $. For the statement of Theorem \ref{T1}, it now remains to establish the equivalence of the Lyapunov functional with the $\Ha^1$ Sobolev norm.

\begin{lemma}
    \label{L14}
   Let  $\alpha_0$, $\beta_0$, $\alpha_1$, $\gamma_1$, $\beta_1$, $\alpha_2$, $\gamma_2$, $\beta_2$, and $\alpha_3$ be fixed as in Lemma \ref{L11} and satisfying \eqref{23}. There exists a constant $C>0$ such that
    \[
   \frac1{C \nu^6}\|f\|_{\Ha^1}^2\le \Phi(f)\le C\|f\|_{\Ha^1}^2,
    \]
    for any $\Ha^1$ function $f$.
\end{lemma}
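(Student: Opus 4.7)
The proof is a termwise verification of both inequalities. The key ingredients are the explicit structure of the commutators $C_i$ and orthogonal complements $S_i$ (to be defined in Subsection \ref{SS3}), together with the smallness conditions on the coefficients from Lemma \ref{L11}.

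For the upper bound $\Phi(f) \lesssim \|f\|_{\Ha^1}^2$, I would first observe that, starting from $C_1 = [u \cdot \nabla_x, \nabla_y]$ and iterating the commutation with $u \cdot \nabla_x$, each of the operators $C_1, S_1, C_2, S_2, C_3$ is a first-order $x$-differential operator with smooth trigonometric coefficients built from the factors $\si_j, \co_j$ in \eqref{33}. Direct inspection of these coefficients reveals that those multiplying $\partial_3, \partial_4$ always carry at least one factor of $\si_3$ or $\si_4$, whereas those multiplying $\partial_1, \partial_2$ are uniformly bounded. Since $|\si_3|, |\si_4| \le \widetilde{\mathrm{d}}_{\mathbb{D}}$, this yields the pointwise bound
\[
|C_i f|^2 + |S_i f|^2 + |\nabla_x' f|^2 \lesssim (\partial_1 f)^2 + (\partial_2 f)^2 + \widetilde{\mathrm{d}}_{\mathbb{D}}^2 \bigl[(\partial_3 f)^2 + (\partial_4 f)^2\bigr].
\]
Handling the mixed terms $\beta_i \langle C_i f, C_{i+1} f\rangle$ by Cauchy--Schwarz and integrating then gives the upper bound.

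For the lower bound $\|f\|_{\Ha^1}^2 \lesssim \Phi(f)$, I would first absorb the inner-product terms by Young's inequality: for example,
\[
\beta_0 \langle \nabla_y f, C_1 f\rangle \ge -\frac{\alpha_0}{4}\|\nabla_y f\|^2 - \frac{\beta_0^2}{\alpha_0}\|C_1 f\|^2,
\]
where $\beta_0^2 \ll_{\eps} \alpha_0 \alpha_1$ from Lemma \ref{L11} lets the $C_1$-piece be absorbed into $\alpha_1 \|C_1 f\|^2$; the mixed terms involving $C_2$ and $C_3$ are handled analogously via $\beta_i^2 \ll_\eps \alpha_i \alpha_{i+1}$. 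It then suffices to control $\|f\|_{\Ha^1}^2$ by the positive combination
\[
\|f\|^2 + \alpha_0 \|\nabla_y f\|^2 + \kappa\delta\|\nabla_x f\|^2 + \alpha_1 \|C_1 f\|^2 + \gamma_1 \|S_1 f\|^2 + \alpha_2 \|C_2 f\|^2 + \gamma_2 \|S_2 f\|^2 + \alpha_3 \|\nabla_x' f\|^2.
\]
The first three summands directly provide the zero-order, $\nabla_y$-, and $\sqrt{\kappa}\nabla_x$-components of $\|f\|_{\Ha^1}^2$, while $\alpha_3 \|\nabla_x' f\|^2$ takes care of $(\partial_1 f)^2 + (\partial_2 f)^2$. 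For the weighted derivatives $\|\widetilde{\mathrm{d}}_{\mathbb{D}} \partial_3 f\|^2$ and $\|\widetilde{\mathrm{d}}_{\mathbb{D}} \partial_4 f\|^2$, the decisive tool is the orthogonal-pair identity at first level: after expanding the squares, the $y$-dependent cross products in $(C_1 f)_k$ and $(S_1 f)_k$ cancel via $\sin^2 + \cos^2 = 1$, yielding
\[
\|C_1 f\|^2 + \|S_1 f\|^2 = \int_{\T^6} \bigl[ \co_4^2 (\partial_1 f)^2 + \si_4^2 (\partial_3 f)^2 + \co_3^2 (\partial_2 f)^2 + \si_3^2 (\partial_4 f)^2 \bigr],
\]
which furnishes the weights $\si_4^2 (\partial_3 f)^2$ and $\si_3^2 (\partial_4 f)^2$. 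The complementary weights $\si_3^2 (\partial_3 f)^2$ and $\si_4^2 (\partial_4 f)^2$, which together complete $\widetilde{\mathrm{d}}_{\mathbb{D}}^2 = \si_3^2 + \si_4^2$, are produced analogously by $\|C_2 f\|^2 + \|S_2 f\|^2$ at the second level, and on summing we recover the full weighted norm.

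The hardest step will be the second-level identity. Unlike $S_1$, which is essentially a $\sin/\cos$ rotation of $C_1$, the complement $S_2$ of $C_2 = [u\cdot\nabla_x, C_1]$ must be designed so that, after squaring and matching with $C_2$ component by component, the pure squared coefficients combine to yield the desired $\si_3^2$ and $\si_4^2$ weights. The pointwise cross products of the form $\partial_i f \partial_j f$ that arise from the expansion of $|C_2 f|^2 + |S_2 f|^2$ must either cancel through further $\sin^2 + \cos^2 = 1$ identities or be absorbed into the controls already established on $(\partial_1 f)^2 + (\partial_2 f)^2$ and $\|\nabla_y f\|^2$; bookkeeping this cancellation, using the specific construction of $S_i$ from Subsection \ref{SS3}, is the technical heart of the proof.
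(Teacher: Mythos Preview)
Your proposal is correct and follows essentially the same route as the paper: absorb the $\beta_i$ mixed terms via $\beta_i^2\ll_\eps\alpha_i\alpha_{i+1}$, use trivial pointwise bounds for $\Phi(f)\lesssim\|f\|_{\Ha^1}^2$, and for the reverse direction invoke the first- and second-level orthogonal-pair identities (these are Lemmas~\ref{lemma:bound-D1} and~\ref{L1} in the paper). One small correction to your anticipation of the second level: the surviving cross term in $|C_2f|^2+|S_2f|^2$ is of the form $\si_3\co_3\si_4\co_4\,\partial_3f\,\partial_4f$, and it is absorbed pointwise (via Young) into $\si_4^2(\partial_3f)^2+\si_3^2(\partial_4f)^2$, i.e.\ into the first-level $M_1$ control you already established, not into $\|\nabla_y f\|^2$.
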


Again, we refer to  Subsection \ref{SS1} for the proof. First, we will introduce and discuss the commutator terms.

\subsection{The commutators}\label{SS3}

Starting with $C_0=\grad_y$, the first Villani commutator $C_1=[C_0,u\cdot\grad_x]$ is readily computed.

\begin{lemma}
    \label{L15}
    It holds that
    \[
    C_1 f = [\grad_y,u\cdot\grad_x]f =   \begin{pmatrix}
    \co_4\co_2\partial_1f -\si_4\si_2\partial_3 f \\
    \co_3\co_1\partial_2 f - \si_3\si_1 \partial_4 f
\end{pmatrix}.
    \]
\end{lemma}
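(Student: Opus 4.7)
The plan is to observe first that because partial derivatives in $x$ and $y$ commute, the commutator reduces to the derivative of the \emph{coefficients} only, namely
\[
[\grad_y,u\cdot\grad_x]f = (\grad_y u)\cdot\grad_x f,
\]
where $(\grad_y u)$ denotes the $2\times 4$ matrix with entries $(\grad_y u)_{ki} = \partial_{y_k} u_i$. Indeed, expanding with the product rule, all terms in which $\grad_y$ falls on $f$ appear in both $\grad_y(u\cdot\grad_x f)$ and $u\cdot\grad_x(\grad_y f)$ and therefore cancel.

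It then remains to read off from the explicit expression \eqref{33} for $u$ which of its components depend on which of the $y$-variables. The key observation is that after the spiral rotation, the variables are coupled crosswise: $\si_1,\co_1$ are functions of $\tilde y_2$ only, $\si_2,\co_2$ are functions of $\tilde y_1$ only, and $\si_3,\co_3,\si_4,\co_4$ are independent of $y$. Consequently, out of the eight possible entries $\partial_{y_k} u_i$ only four are non-zero, namely
\[
\partial_{y_1}u_1 = \co_4\co_2,\qquad \partial_{y_1}u_3 = -\si_4\si_2,\qquad \partial_{y_2}u_2 = \co_3\co_1,\qquad \partial_{y_2}u_4 = -\si_3\si_1.
\]

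Substituting these four entries into $(C_1f)_k = \sum_{i=1}^{4}(\partial_{y_k}u_i)\partial_{i}f$ gives exactly the two components claimed in the lemma. The computation is entirely mechanical; the only point requiring a little care is the crosswise $y$-dependence (i.e.\ that $y_2$ appears paired with $x_1$ and $y_1$ with $x_2$), which comes from the sign flip $\tilde y_1=-y_2$, $\tilde y_2=-y_1$ built into the change of variables preceding \eqref{33}.
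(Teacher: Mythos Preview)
Your proof is correct and follows exactly the same approach as the paper: you reduce the commutator to $(\grad_y u)^T\grad_x f$ (the paper's notation) and then read off the nonzero entries of $\grad_y u$ from \eqref{33}. Your version is simply more explicit about which entries vanish and why, but there is no substantive difference.
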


\begin{proof}
    Because $\grad_y$ is independent of $x$, it is  clear that $C_1f=(\grad_yu)^T\grad_xf$, and the stated formula is immediately verified.
\end{proof}
We will see that  this term, and also the associated dissipation term $C_1\grad_y f$ are actually controlled during the evolution: It holds that $\beta_0\|C_1f\|^2+\nu\alpha_1 \|C_1\grad_y f\|^2\le \Psi(f)$.
It turns out that in addition to considering the first commutator, having its orthogonal complement will be beneficial for our analysis. It is given by
\begin{equation}
    \label{34}
S_1f = \begin{pmatrix}
   \co_4 \si_2\partial_1 f + \si_4\co_2\partial_3 f \\
   \co_3 \si_1\partial_2 f + \si_3\co_1\partial_4 f
\end{pmatrix}.
\end{equation}
The merit of including the orthogonal complement and a justification of this notion can be seen by the following pointwise identity.

\begin{lemma}\label{lemma:bound-D1}
We have
\[
|M_1 \grad_xf|^2 =  |C_1f|^2 + |S_1f|^2,
\]
where 
\begin{align*}
M_1 &= \mathrm{diag}\left(
    \co_4,\co_3,\si_4,\si_3\right).
    \end{align*}
\end{lemma}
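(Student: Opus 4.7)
The plan is a direct algebraic verification relying on the Pythagorean identity $\co_i^2 + \si_i^2 = 1$. I would expand $|C_1 f|^2 + |S_1 f|^2$ one vector component at a time, using the formula from Lemma \ref{L15} for $C_1 f$ and the definition \eqref{34} for $S_1 f$.

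The first component contributes
\begin{equation*}
(\co_4\co_2\partial_1 f - \si_4\si_2\partial_3 f)^2 + (\co_4\si_2\partial_1 f + \si_4\co_2\partial_3 f)^2.
\end{equation*}
The crucial observation is that upon expansion the two cross terms $\mp 2\co_4\si_4\co_2\si_2\,\partial_1 f\,\partial_3 f$ cancel exactly. The remaining diagonal pieces factor as $\co_4^2(\co_2^2 + \si_2^2)(\partial_1 f)^2 + \si_4^2(\si_2^2 + \co_2^2)(\partial_3 f)^2$, which collapses to $\co_4^2(\partial_1 f)^2 + \si_4^2(\partial_3 f)^2$. The second component is handled identically and yields $\co_3^2(\partial_2 f)^2 + \si_3^2(\partial_4 f)^2$. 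Adding these four pieces produces precisely $|M_1 \grad_x f|^2$, since by the definition of $M_1$ one has $M_1 \grad_x f = (\co_4\partial_1 f,\,\co_3\partial_2 f,\,\si_4\partial_3 f,\,\si_3\partial_4 f)^T$.

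Conceptually, the cancellation above reflects that, for each coupled pair of derivatives $(\partial_1 f, \partial_3 f)$, respectively $(\partial_2 f, \partial_4 f)$, the $2\times 2$ coefficient matrix whose rows list the $C_1$- and $S_1$-entries has orthogonal columns. Its Gram matrix is therefore diagonal with entries $\co_4^2, \si_4^2$ (respectively $\co_3^2, \si_3^2$), which is exactly what $|M_1 \grad_x f|^2$ records. This justifies the terminology \emph{orthogonal complement} for $S_1$ used just before the statement of the lemma, and confirms that there is no genuine obstacle here: the argument is a mechanical verification with a clean geometric interpretation.
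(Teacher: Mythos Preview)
Your proof is correct and follows essentially the same approach as the paper: both expand $(C_1f)_i^2+(S_1f)_i^2$ componentwise and use the Pythagorean identity to collapse to $\co_4^2(\partial_1 f)^2+\si_4^2(\partial_3 f)^2$ (and analogously for the second component). You supply a bit more detail on the cross-term cancellation and add a nice geometric remark about the Gram matrix, but the argument is the same.
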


\begin{proof}
Notice that we can write, using the Pythagorean identity,
\[
\begin{split}
(C_1f)_1^2 + (S_1f)_1^2 & =\co_4^2(\partial_1 f)^2 + \si_4^2 (\partial_3 f)^2,
\end{split}
\]
which immediately gives the desired result for the derivatives $\partial_1f$ and $\partial_3f$. The analogous statement for the remaining derivatives follows identically from the second components.
\end{proof}

The control of the first orthogonal complement by the dissipation term can be  established via a Hardy--Poincar\'e inequality.

\begin{lemma}
    \label{L9}
We have the estimate
\[
 \|S_1f\|^2 \lesssim \|C_1f\|^2 + \|C_1\nabla_yf\|^2.
\]
In particular, if the coefficients of $\Psi$ satisfy the relations of Lemma \ref{L11}, we control
\[
\nu \alpha_1 \|M_1 \grad_x  f\|^2 +\nu\alpha_1\|S_1f\|^2 \lesssim \Psi(f).
\]
\end{lemma}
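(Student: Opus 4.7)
The plan hinges on the pointwise identity
\[
(S_1 f)_1 = -\partial_{y_1}(C_1 f)_1 + (C_1 \partial_{y_1}f)_1,
\]
obtained by differentiating $(C_1 f)_1 = \co_4\co_2\partial_1 f - \si_4\si_2\partial_3 f$ in $y_1$ and using $\partial_{y_1}\co_2 = -\si_2$, $\partial_{y_1}\si_2 = \co_2$, together with its companion $\partial_{y_1}(S_1 f)_1 = (C_1 f)_1 + (S_1 \partial_{y_1}f)_1$. Both have direct analogues for the second component with $y_2$ in place of $y_1$. Testing the first identity against $(S_1 f)_1$ and integrating by parts in $y_1$ (periodicity kills boundary terms), then substituting the second identity, yields the exact equality
\[
\|(S_1 f)_1\|^2 = \|(C_1 f)_1\|^2 + \langle (S_1 \partial_{y_1}f)_1, (C_1 f)_1\rangle + \langle (S_1 f)_1, (C_1 \partial_{y_1}f)_1\rangle.
\]
A Young's inequality with a small parameter on the last cross term absorbs $\|(S_1 f)_1\|^2$ back to the left and leaves $\|(C_1 \partial_{y_1}f)_1\|^2$ on the right.

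The delicate step is the remaining cross term, which a priori carries $\|(S_1 \partial_{y_1}f)_1\|^2$. Here the Hardy--Poincar\'e ingredient alluded to in the text enters, via the rotational structure that is Lemma~\ref{lemma:bound-D1} in complex form:
\[
(C_1 f)_1 + i(S_1 f)_1 = e^{i(\tilde x_2 + y_1)}(\co_4 \partial_1 f + i\si_4 \partial_3 f).
\]
Expanding both sides in $y_1$-Fourier series, the coefficients of $(C_1 f)_1$, $(S_1 f)_1$, and $(C_1 \partial_{y_1} f)_1$ at mode $k$ are all linear combinations of the single scalar Fourier coefficient $A^{k\mp 1}$ of $A = \co_4\partial_1 f + i\si_4 \partial_3 f$, and the naive factor of $k$ incurred when $\partial_{y_1}$ is commuted through $C_1$ is cancelled by a matching factor in the combination $[(S_1 f)_1]^k = -ik[(C_1 f)_1]^k + [(C_1 \partial_{y_1} f)_1]^k$. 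Exploiting this cancellation closes the loop without paying a higher $y$-derivative cost. An identical argument for $(S_1 f)_2$ with $y_2$ and the other trigonometric pair gives the first estimate.

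The second inequality is then short: by Lemma~\ref{lemma:bound-D1}, $\|M_1 \nabla_x f\|^2 = \|C_1 f\|^2 + \|S_1 f\|^2$, so the first estimate yields
\[
\nu\alpha_1 \|M_1 \nabla_x f\|^2 + \nu\alpha_1 \|S_1 f\|^2 \lesssim \nu\alpha_1 \|C_1 f\|^2 + \nu\alpha_1 \|C_1 \nabla_y f\|^2.
\]
The second term on the right is one of the dissipation contributions in~\eqref{diss}. For the first, the scaling~\eqref{23} gives $\nu\alpha_1 \sim \nu \cdot \nu^{-2} = \nu^{-1} \sim \beta_0$, so $\nu\alpha_1 \|C_1 f\|^2 \lesssim \beta_0 \|C_1 f\|^2 \le \Psi(f)$. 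The main obstacle I expect is the Fourier cancellation itself: a naive Young's inequality without the rotational identity would leave $\|\partial_{y_1}(C_1 f)_1\|^2$ on the right, which is strictly stronger than $\|(C_1 f)_1\|^2$ and incompatible with the $\nu$-scaling of $\Psi$; iterating that naive bound to $\partial_{y_1}^n f$ does not terminate at a finite number of derivatives, so the cancellation is essential.
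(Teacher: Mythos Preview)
Your intermediate identity
\[
\|(S_1 f)_1\|^2 = \|(C_1 f)_1\|^2 + \langle (S_1 \partial_{y_1}f)_1, (C_1 f)_1\rangle + \langle (S_1 f)_1, (C_1 \partial_{y_1}f)_1\rangle
\]
is exactly what the paper obtains. The gap is in how you handle the ``delicate'' cross term $\langle (S_1 \partial_{y_1}f)_1, (C_1 f)_1\rangle$. The paper's observation, which you miss, is that the two cross terms are \emph{equal}: since both $(C_1)_i$ and $(S_1)_i$ are skew-adjoint first-order operators (their coefficients are independent of the differentiation variable) and since $[(C_1)_i,(S_1)_i]=0$ (this is Lemma~\ref{L21}; it is a two-line check because the coefficients $\co_4,\si_4,\co_2,\si_2$ are independent of $x_1,x_3$), one has
\[
\langle (S_1)_i g,(C_1)_i h\rangle = -\langle g,(S_1)_i(C_1)_i h\rangle = -\langle g,(C_1)_i(S_1)_i h\rangle = \langle (C_1)_i g,(S_1)_i h\rangle.
\]
So the identity collapses to $\|(S_1f)_i\|^2 = \|(C_1f)_i\|^2 + 2\langle (C_1\partial_{y_i}f)_i,(S_1f)_i\rangle$, and a single Young inequality finishes the job.

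Your proposed Fourier route does not close as written. In your notation, writing $P=e^{ix_2}A^{k-1}$ and $Q=e^{-ix_2}\overline{A^{-k-1}}$, one has $[(C_1f)_1]^k=\tfrac12(P+Q)$, $[(S_1f)_1]^k=\tfrac1{2i}(P-Q)$, and $[(C_1\partial_{y_1}f)_1]^k=\tfrac{i}{2}((k-1)P+(k+1)Q)$. Inverting the $2\times 2$ system gives $P-Q = k(P+Q) - ((k-1)P+(k+1)Q)$, so a mode-by-mode bound of $|[(S_1f)_1]^k|$ by $|[(C_1f)_1]^k|$ and $|[(C_1\partial_{y_1}f)_1]^k|$ necessarily carries a factor $|k|$ --- precisely the loss you said you wanted to avoid. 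The cancellation only appears \emph{after} summing in $k$: one finds that the difference of the two cross terms equals $\sum_j j\|A^j\|^2$, and showing this vanishes requires integrating by parts in $x_1,x_3$ and using $[\partial_1,\partial_3]=0$ --- which is exactly the content of $[(C_1)_1,(S_1)_1]=0$. So the Fourier argument is not wrong, but it is a roundabout way of rediscovering the commutator identity; invoking that identity directly is both shorter and what the paper does.

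Your treatment of the addendum is correct and matches the paper: Lemma~\ref{lemma:bound-D1} plus the first estimate plus $\nu\alpha_1\lesssim\beta_0$ from the ordering in Lemma~\ref{L11}.
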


\begin{proof}
    Notice that the first   commutator $C_1$ and its orthogonal complement $S_1$ are related by differentiation,
    \[
    (S_1f)_i = -(\partial_{y_i}C_1f)_i,\quad (\partial_{y_i}S_1f)_i=(C_1f)_i,
    \]
 and they are component-wise commuting, $[S_1,C_1]=0$, see also Lemma \ref{L21}. Therefore, we obtain via some integrations by parts 
\[
\begin{split}
    \|(S_1f)_i\|^2 
    & =  \langle (C_1f)_i,(\partial_{y_i}S_1f)_i\rangle + \langle (C_1\partial_{y_i}f)_i,(S_1f)_i\rangle + \langle (C_1f)_i,(S_1\partial_{y_i}f)_i\rangle \\
    & = \|(C_1f)_i\|^2 + 2\langle (C_1\partial_{y_i}f)_i,(S_1f)_i\rangle.
\end{split}
\]
We deduce the first statement after using Hölder's and Young's inequality  appropriately. The addendum then follows by inspection of the dissipation term via Lemma \ref{lemma:bound-D1}, provided that $\nu\alpha_1 \lesssim \beta_0$.
\end{proof}

Villani's hypocoercivity method always produces next-order commutators. The major task is to analyse which part of these is already controlled by previous commutators, hoping that at some point a higher-order commutator is completely controlled. We thus turn to the second-order commutators.

\begin{lemma}
    \label{L2}
    It holds that 
    \[
  [C_1,u\cdot\nabla_x]f =  C_2f +R_2f ,
    \]
    where 
    \[
C_2f = \ \begin{pmatrix}
 \si_3\co_1(\si_4\co_2\partial_1 f +\co_4\si_2 \partial_3 f)  \\ 
   \si_3\si_1(\si_4\si_2 \partial_1 f -\co_4\co_2\partial_3 f
\end{pmatrix}
+ \begin{pmatrix}
    \si_4\si_2(\si_3\si_1\partial_2 f -\co_3\co_1\partial_4 f)  \\
    \si_4\co_2 (\si_3\co_1\partial_2 f + \co_3 \si_1\partial_4 f)
\end{pmatrix},
\]
and, if the coefficients of $\Psi$ satisfy the relations in Lemma \ref{L11}, the remainder $R_2$ satisfies the estimate
\[
\nu \alpha_1 \|R_2 f\|^2  \lesssim  \Psi(f).
\]
\end{lemma}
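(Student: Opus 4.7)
The proof is a direct calculation. Writing $C_1^{(1)} f = a\, \partial_1 f + b\, \partial_3 f$ with $a = \co_4\co_2$ and $b = -\si_4\si_2$, and analogously for $C_1^{(2)}$, I would expand the commutator by the standard vector field identity
\[
[C_1^{(1)}, u\cdot\nabla_x] f = \sum_{l=1}^4 (a\, \partial_1 u_l + b\, \partial_3 u_l)\, \partial_l f - (u\cdot\nabla_x a)\, \partial_1 f - (u\cdot\nabla_x b)\, \partial_3 f,
\]
and correspondingly for the second component. Substituting the explicit form of $u$ from \eqref{33}, every resulting contribution is a concrete trigonometric coefficient times a first-order $x$-derivative of $f$.

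The decisive step is the splitting. I would collect into $C_2 f$ all summands whose coefficient carries a factor $\si_3$ or $\si_4$, i.e., precisely those that inherit a degeneracy on the diagonal through the drift components $u_3 = \si_4\co_2$ and $u_4 = \si_3\co_1$; direct bookkeeping shows that what remains on this side matches the formula stated in the lemma. The remainder $R_2 f$ then consists of the summands that do not carry such vanishing factors, and a short inspection shows that every such summand can nonetheless be written as a bounded trigonometric prefactor times one of $\co_4\partial_1 f$, $\co_3\partial_2 f$, $\si_4\partial_3 f$, $\si_3\partial_4 f$, i.e., times a component of $M_1 \nabla_x f$, with $M_1 = \diag(\co_4,\co_3,\si_4,\si_3)$ as in Lemma \ref{lemma:bound-D1}.

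This yields the pointwise bound $|R_2 f|^2 \lesssim |M_1 \nabla_x f|^2$. Integrating and invoking Lemma \ref{lemma:bound-D1} gives $\|R_2 f\|^2 \lesssim \|C_1 f\|^2 + \|S_1 f\|^2$, and the Hardy--Poincar\'e estimate of Lemma \ref{L9} replaces $\|S_1 f\|^2$ by $\|C_1 f\|^2 + \|C_1\nabla_y f\|^2$. Multiplying through by $\nu\alpha_1$ and invoking the scaling $\nu\alpha_1 \ll_{\eps} \beta_0$ from Lemma \ref{L11}, the term $\nu\alpha_1\|C_1 f\|^2$ is absorbed by $\beta_0\|C_1 f\|^2 \leq \Psi(f)$, while $\nu\alpha_1\|C_1\nabla_y f\|^2$ is already a summand of $\Psi(f)$, and the claim follows. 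The only real obstacle in this argument is purely organisational, namely verifying by hand that, after subtracting $C_2 f$, every leftover summand really does factor through one of the four diagonal entries of $M_1$; there is no analytic subtlety, since the genuinely new input that makes the estimate close---the orthogonal complement $S_1$ together with its Hardy--Poincar\'e control in Lemma \ref{L9}---is already in place before this lemma.
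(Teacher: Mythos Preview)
Your proposal is correct and follows essentially the same route as the paper: compute the commutator explicitly, peel off the stated $C_2f$, and show the remainder is controlled via Lemma~\ref{L9}. The only cosmetic difference is that the paper recognises $R_2f$ directly as bounded trigonometric multiples of the components of $C_1f$ and $S_1f$, whereas you pass through $|M_1\nabla_x f|$ and then invoke Lemma~\ref{lemma:bound-D1}; these are equivalent by that very identity.

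One small inaccuracy worth flagging: your stated splitting criterion (``collect into $C_2f$ all summands whose coefficient carries a factor $\si_3$ or $\si_4$'') is not literally how the split works---for instance the term $-\co_4\co_2\si_3\si_1\partial_4 f$ in the first component of $R_2f$ does carry a $\si_3$ but belongs to the remainder. This does not affect your argument, since $C_2f$ is prescribed in the statement and your subsequent claim that every leftover term factors through an entry of $M_1$ is correct; just be aware that the heuristic you offer for the split is not the actual rule.
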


\begin{proof}
A direct computation shows that
\[
\begin{split}
    [C_1,u\cdot\nabla_x]f & = \begin{pmatrix}
       \co_4\co_2(\co_3\co_1\partial_2f - \si_3\si_1\partial_4f) \\
       \co_3\co_1(\co_4\co_2 \partial_1 f  -\si_4\si_2\partial_3f)
    \end{pmatrix} 
    + \begin{pmatrix}
        \co_3\si_1(\co_4\si_2\partial_1 f + \si_4\co_2\partial_3f) \\ \co_4\si_2 (\co_3\si_1\partial_2 f + \si_3\co_1 \partial_4f)
    \end{pmatrix} \\
    & \quad + \begin{pmatrix}
   \si_4\si_2(\si_3\si_1\partial_2 f -\co_3\co_1\partial_4 f) \\ 
   \si_3\si_1(\si_4\si_2 \partial_1 f -\co_4\co_2\partial_3 f
\end{pmatrix}
+ \begin{pmatrix}
    \si_3\co_1(\si_4\co_2\partial_1 f +\co_4\si_2 \partial_3 f) \\
    \si_4\co_2 (\si_3\co_1\partial_2 f + \co_3 \si_1\partial_4 f)
\end{pmatrix}\\
    & = \begin{pmatrix}
   \co_4 \co_2(C_1f)_2 \\
    \co_3 \co_1(C_1f)_1
\end{pmatrix} + \begin{pmatrix}
   \co_3 \si_1(S_1f)_1 \\
  \co_4 \si_2(S_1f)_2
\end{pmatrix} + C_2f,
\end{split}
\]
so that the claim of the lemma follows from the definition of the dissipation term $\Psi(f)$, Lemma \ref{L9}, and the ordering of the coefficients in Lemma \ref{L11}.
\end{proof}

It is immediately clear that the new commutator term $C_2f$ cannot be controlled by  $M_1\grad_x f$, because the degeneracies are shifted by $\pi/2$.   However, similarly to the first order commutators, we obtain sharp estimates near the diagonal by including the second orthogonal complement, given by
 \[
S_2f = \begin{pmatrix}
\si_3\co_1(\si_4\si_2\partial_1 f - \co_4\co_2\partial_3 f)\\ \si_3\si_1(\si_4\co_2\partial_1 f + \co_4\si_2\partial_3f)
\end{pmatrix}
-  \begin{pmatrix}
    \si_4\co_2(\si_3\si_1 \partial_2 f - \co_3\co_1\partial_4 f)\\ \si_4\si_2(\si_3\co_1\partial_2 f +\co_3\si_1\partial_4 f)
\end{pmatrix}.
\] 
This operator is actually not perfectly orthogonal as it was the case for the first-order terms.

Let us first provide  point-wise   lower bounds.

\begin{lemma}\label{L1}
    We have \[
    |M_2\grad_x f|^2 \le |C_2 f|^2 + |S_2 f|^2 + 3|M_1\grad_x f|^2,
    \]
    where 
    \begin{align*}
M_2 &= \mathrm{diag}\left(
    \si_3 , \si_4,\si_3 ,\si_4 \right) .
    \end{align*}
\end{lemma}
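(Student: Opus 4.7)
I would prove this by direct pointwise-in-$(\tilde x,\tilde y)$ algebra at the level of the partial derivatives. Writing $a=\partial_1 f$, $b=\partial_2 f$, $c=\partial_3 f$, $d=\partial_4 f$, the inequality becomes a purely algebraic statement about polynomials in $(a,b,c,d)$ with coefficients in the trigonometric functions, since
\[
|M_2\grad_x f|^2=\si_3^2(a^2+c^2)+\si_4^2(b^2+d^2),\qquad |M_1\grad_x f|^2=\co_4^2 a^2+\co_3^2 b^2+\si_4^2 c^2+\si_3^2 d^2.
\]

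\textbf{The central identity.} The main step, and the part I expect to be the hardest, is establishing the collapse
\begin{equation*}
|C_2 f|^2+|S_2 f|^2 \;=\; \si_3^2\si_4^2\,(a^2+b^2)+(\si_3\co_4 c-\si_4\co_3 d)^2.
\end{equation*}
The plan is to notice that the bulky four components of $C_2 f,S_2 f$ pair up as orthogonal rotations in two successive angles. A direct regrouping gives $(C_2 f)_1=\co_2\alpha_1+\si_2\beta_1$, $(S_2 f)_1=\si_2\alpha_1-\co_2\beta_1$ with $\alpha_1=\si_3\si_4\co_1\,a$ and $\beta_1=\si_3\co_1\co_4\,c+\si_4(\si_3\si_1 b-\co_3\co_1 d)$; and analogously $(C_2 f)_2=\si_2\alpha_2-\co_2\beta_2$, $(S_2 f)_2=\co_2\alpha_2+\si_2\beta_2$ with $\alpha_2=\si_3\si_4\si_1\,a$ and $\beta_2=\si_3\si_1\co_4\,c-\si_4(\si_3\co_1 b+\co_3\si_1 d)$. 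A first application of $\co_2^2+\si_2^2=1$ eliminates the angle $\tilde x_2+\tilde y_1$ and reduces the sum of squares to $\alpha_1^2+\alpha_2^2+\beta_1^2+\beta_2^2$. Rewriting $\beta_1=\co_1 P+\si_1 R$ and $\beta_2=\si_1 P-\co_1 R$ with $P=\si_3\co_4 c-\si_4\co_3 d$ and $R=\si_3\si_4 b$, a second use of $\co_1^2+\si_1^2=1$ removes the angle $\tilde x_1+\tilde y_2$ and yields the stated identity.

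\textbf{Finishing the inequality.} Given the identity, the rest is routine. For the $a$ and $b$ contributions to $|M_2\grad_x f|^2$, write $\si_3^2=\si_3^2\si_4^2+\si_3^2\co_4^2$ and use $\si_3^2\le 1\le 3$ to get $\si_3^2 a^2\le \si_3^2\si_4^2 a^2+3\co_4^2 a^2$, and symmetrically $\si_4^2 b^2\le \si_3^2\si_4^2 b^2+3\co_3^2 b^2$. What remains is the $(c,d)$ inequality
\begin{equation*}
\si_3^2 c^2+\si_4^2 d^2 \;\le\; (\si_3\co_4 c-\si_4\co_3 d)^2+3\si_4^2 c^2+3\si_3^2 d^2,
\end{equation*}
which I would verify by observing that the difference is the quadratic form in $(c,d)$ with diagonal entries $\si_4^2(3-\si_3^2)$ and $\si_3^2(3-\si_4^2)$, both non-negative, and discriminant $\si_3^2\si_4^2[\co_3^2\co_4^2-(3-\si_3^2)(3-\si_4^2)]=\si_3^2\si_4^2[-8+2\si_3^2+2\si_4^2]\le 0$. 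Hence the form is positive semidefinite, and summing the three bounds produces the lemma with the announced constant $3$. The factor $3$ itself is incidental: any factor strictly greater than $1$ would make the discriminant of the $(c,d)$ form non-positive, so the value $3$ is chosen purely for a clean margin.
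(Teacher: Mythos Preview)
Your proof is correct and follows essentially the same pointwise-algebraic route as the paper: both arguments reduce to the same identity
\[
|C_2 f|^2+|S_2 f|^2=\si_3^2\si_4^2(a^2+b^2)+\si_3^2\co_4^2 c^2+\co_3^2\si_4^2 d^2-2\si_3\co_3\si_4\co_4\,cd,
\]
obtained by successive Pythagorean collapses in the angles $\tilde x_2+\tilde y_1$ and $\tilde x_1+\tilde y_2$, and then compare termwise with $|M_1\grad_x f|^2$. The one cosmetic difference is that you keep the last three terms as the perfect square $(\si_3\co_4 c-\si_4\co_3 d)^2$ and close the $(c,d)$ inequality by a clean positive-semidefiniteness check, whereas the paper expands the square, bounds the cross term $2|\si_3\co_3\si_4\co_4\,cd|\le \si_4^2c^2+\si_3^2d^2$ via Young, and then upgrades the diagonal weights one by one; this is a stylistic rather than structural distinction.
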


\begin{proof}
    By using  the Pythagorean identity a couple of times, we write
    \begin{align*}
     |C_2 f|^2 +|S_2 f|^2
& = \si_3^2(\si_4\co_2\partial_1 f +\co_4\si_2 \partial_3f)^2 + \si_3^2 (\si_4\si_2 \partial_1 f -\co_4\co_2 \partial_3f)^2\\
&\quad + \si_4^2 (\si_3\si_1\partial_2 f - \co_3\co_1 \partial_4f)^2 + \si_4^2 (\si_3 \co_1 \partial_2 f +\co_3\si_1 \partial_4 f)^2\\
&\quad  - 2\si_3\co_3\si_4\co_4 \partial_3 f \partial_4 f\\
& = \si_3^2\si_4^2 (\partial_1f)^2 + \si_3^2\si_4^2 (\partial_2f)^2 + \si_3^2\co_4^2 (\partial_3 f)^2 + \co_3^2 \si_4^2 (\partial_4 f)^2\\
&\quad  - 2\si_3\co_3\si_4\co_4 \partial_3 f \partial_4 f.
\end{align*}
 Thanks to   Young's inequality, the mixed term can be   estimated as follows,
\begin{align*}
2\left|\si_3\co_3\si_4\co_4 \partial_3 f \partial_4 f \right|\le \si_4^2 (\partial_3f)^2 +\si_3^2 (\partial_4f)^2 \le |M_1\grad_x f|^2,
\end{align*}
and thus,
\[
\si_3^2\si_4^2 (\partial_1f)^2 + \si_3^2\si_4^2 (\partial_2f)^2 + \si_3^2\co_4^2 (\partial_3 f)^2 + \co_3^2 \si_4^2 (\partial_4 f)^2 \le  |C_2 f|^2 +|S_2 f|^2 + |M_1\grad_x f|^2.
\]
To improve the factors on the left side, it remains to notice using the Pythagorean identity that, for example
\[
\si_3^2(\partial_1 f)^2  =\si_3^2(\si_4^2+\co_4^2)(\partial_1 f)^2 \le \si_3^2\si_4^2 (\partial_1 f)^2 + \si_3^2 |M_1\grad_x f|^2.
\]
Arguing for the remaining terms analogously and invoking the Pythagorean identity once more, we derive the statement of the lemma.
\end{proof}

Now, we explain how the second orthogonal complement can be controlled by the dissipation term.

\begin{lemma}\label{lemma:S2-bound}
We have that
\[
\|S_2f\| \lesssim   \|C_2f\| + \|C_2\nabla_yf\|  + \|\grad_y f\| + \|M_1\grad_x f\|.
\]
In particular, if the coefficients of $\Psi$ satisfy the ordering of Lemma \ref{L11}, then 
\[
\nu \alpha_2 \|M_2\grad_x f\|^2+ \nu \alpha_2 \|S_2 f\|^2  \lesssim \Psi(f).
\]
\end{lemma}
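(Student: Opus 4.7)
The plan is to mimic the strategy of Lemma~\ref{L9} component by component, with the caveat that the cross terms produced by integration by parts no longer cancel exactly and instead leave a manageable remainder. The starting point is to verify by direct inspection of the explicit coefficient matrices $A_{C_2}$ and $A_{S_2}$ (writing $C_2 f=A_{C_2}\nabla_x f$, $S_2 f=A_{S_2}\nabla_x f$) that the coefficient-only derivatives satisfy, component-wise,
\[
(\partial_{y_1}A_{C_2})_1\cdot\nabla_x f=-(S_2 f)_1,\qquad (\partial_{y_1}A_{S_2})_1\cdot\nabla_x f=(C_2 f)_1,
\]
together with the analogous identities for the second component (using $\partial_{y_2}$ in place of $\partial_{y_1}$, up to a sign). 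By the Leibniz rule these translate into
\[
\partial_{y_1}(C_2 f)_1=-(S_2 f)_1+(C_2\partial_{y_1}f)_1,\quad \partial_{y_1}(S_2 f)_1=(C_2 f)_1+(S_2\partial_{y_1}f)_1,
\]
exactly mirroring the first-commutator case.

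Substituting one copy of $(S_2 f)_1$ into $\|(S_2 f)_1\|^2$ and integrating by parts in $y_1$ on the torus, as in Lemma~\ref{L9}, yields
\[
\|(S_2 f)_1\|^2=\|(C_2 f)_1\|^2+\langle (S_2\nabla_y f)_1,(C_2 f)_1\rangle+\langle (S_2 f)_1,(C_2\nabla_y f)_1\rangle.
\]
In the first-order case the two inner products could be merged thanks to the identity $\langle(C_1 g)_i,(S_1 h)_i\rangle=\langle(C_1 h)_i,(S_1 g)_i\rangle$, which holds because the antisymmetric part of the coefficient product integrates to zero in $x$ after one integration by parts. For $C_2$ and $S_2$ that symmetry fails; however, the defect can be rewritten, after one integration by parts in $x$, as $\int \partial_{y_1}f\,\mathcal{R}f\,dx\,dy$, where $\mathcal{R}$ is a first-order multiplication operator $\mathcal{R}h=B(x,y)\cdot\nabla_x h$. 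Because every nonzero coefficient of $A_{C_2}$ and $A_{S_2}$ carries exactly one factor from $\{\si_3,\co_3,\si_4,\co_4\}$, the matrix $B$ retains an $M_1$-type factor on each column, so that $|\mathcal{R}f|\lesssim |M_1\nabla_x f|$ pointwise. Thus $|R|\lesssim \|\nabla_y f\|\bigl(\|C_2 f\|+\|M_1\nabla_x f\|\bigr)$, and Young's inequality applied to the remaining cross term $\langle (S_2 f)_1,(C_2\nabla_y f)_1\rangle$ absorbs $\tfrac12\|(S_2 f)_1\|^2$ to the left; this gives the first inequality for the first component, and the second component is handled identically.

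For the addendum I apply Lemma~\ref{L1} to bound $\nu\alpha_2\|M_2\nabla_x f\|^2\le\nu\alpha_2\bigl(\|C_2 f\|^2+\|S_2 f\|^2+3\|M_1\nabla_x f\|^2\bigr)$ and substitute the just-proved estimate for $\|S_2 f\|$. Each resulting term is controlled by $\Psi(f)$ under the ordering of Lemma~\ref{L11}: $\nu\alpha_2\|C_2 f\|^2\le\beta_1\|C_2 f\|^2\subset\Psi(f)$ since $\nu\alpha_2\ll\beta_1$, the contribution $\nu\alpha_2\|C_2\nabla_y f\|^2$ already sits in $\Psi(f)$, $\nu\alpha_2\|\nabla_y f\|^2\le\nu\|\nabla_y f\|^2\subset\Psi(f)$, and $\nu\alpha_2\|M_1\nabla_x f\|^2\le\nu\alpha_1\|M_1\nabla_x f\|^2\lesssim\Psi(f)$ by Lemma~\ref{L9}. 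The main obstacle I expect is the bookkeeping of the defect $\mathcal{R}$: one must carefully track which sine/cosine factors survive the $x$-integration by parts and confirm that the residual multiplier can genuinely be dominated by the $M_1$ weight, since this is precisely what produces the $\|M_1\nabla_x f\|$ summand on the right-hand side of the stated estimate.
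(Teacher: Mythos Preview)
Your overall architecture is right and in fact coincides with the paper's: after the integration by parts in $y_1$ you arrive at
\[
\|(S_2f)_1\|^2=\|(C_2f)_1\|^2+\langle (S_2\partial_{y_1} f)_1,(C_2 f)_1\rangle+\langle (S_2 f)_1,(C_2\partial_{y_1} f)_1\rangle,
\]
and your ``defect'' $\mathcal R$, obtained by one further integration by parts in $x$ on the antisymmetric part, is \emph{exactly} the operator $[(C_2)_1,(S_2)_1]$ (both $(C_2)_1$ and $(S_2)_1$ are divergence-free in $x$, hence skew-adjoint, so the two manipulations lead to the same object). This is also how the paper proceeds.

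The genuine gap is in your claim $|\mathcal R f|\lesssim |M_1\nabla_x f|$. Your justification --- that each entry of $A_{C_2}$, $A_{S_2}$ carries exactly one factor from $\{\si_3,\co_3,\si_4,\co_4\}$ --- is false: every entry carries \emph{two} such factors (e.g.\ the $\partial_1$-coefficient of $(C_2)_1$ is $\si_3\si_4\co_1\co_2$). If one computes the commutator explicitly, the $\partial_2$-coefficient of $[(C_2)_1,(S_2)_1]$ is $\si_3^2\si_4^2(\co_1^2-\si_1^2)$, which is controlled by the $M_2$-weight $\si_4$ but \emph{not} by the $M_1$-weight $\co_3$ (it does not vanish at $x_3=\pi/2$). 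The correct bound, as stated in Lemma~\ref{L21} and used in the paper's proof, is $|\mathcal R f|\lesssim |M_2\nabla_x f|$.

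This matters because the lemma's right-hand side contains $\|M_1\nabla_x f\|$, not $\|M_2\nabla_x f\|$. The missing step is to invoke the pointwise inequality of Lemma~\ref{L1},
\[
|M_2\nabla_x f|^2\le |C_2f|^2+|S_2f|^2+3|M_1\nabla_x f|^2,
\]
so that the cross term $\|\nabla_y f\|\,\|M_2\nabla_x f\|$ becomes, after Young, a small multiple of $\|S_2f\|^2$ (absorbed on the left) plus $\|C_2f\|^2+\|\nabla_y f\|^2+\|M_1\nabla_x f\|^2$. Once this correction is made, your argument coincides with the paper's and the addendum follows exactly as you wrote.
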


\begin{proof}We consider the first entry of $S_2f$.
Using the identities $(S_2)_1 = -(\partial_{y_1}C_2)_1$ and $(\partial_{y_1} S_2)_1 = (C_2)_1$,  and some integrations by parts, we find
\[
\begin{split}
\|(S_2f)_1\|^2 &  = -\langle (S_2f)_1, (\partial_{y_1}C_2f)_1\rangle \\
    & = \langle (\partial_{y_1}S_2f)_1, (C_2f)_1\rangle + \langle (S_2\partial_{y_1}f)_1, (C_2f)_1\rangle + \langle (S_2f)_1, (C_2\partial_{y_1}f)_1\rangle\\
    & = \|(C_2f)_1\|^2 + 2 \langle (S_2f)_1,(C_2\partial_{y_1}f)_1\rangle +   \langle [(C_2)_1,(S_2)_1]f,\partial_{y_1}f\rangle .
\end{split}
\]
When calculating the commutator appearing on the right-hand side, we obtain multiples of $\si_3\partial_1 f$, $\si_4 \partial_2 f$, $\si_3 \partial_3 f$ and $\si_4 \partial_4 f$. All other terms cancel out. The commutator is thus controlled by $|M_2\grad_x f|$, and thus, using the Cauchy--Schwarz and Young inequalities, we find
\[
\|(S_2 f)_1 \|^2  \lesssim \|C_2 f\|^2 + \|C_2 \grad_y f\|^2 + \|\grad_y f\| \|M_2\grad_x f\|.
\]
Via a shift by $\pi/2$ in both $x_3$ and $x_4$, we  get the analogous estimate  also for the second component. Invoking then the pointwise estimate of Lemma \ref{L1} and applying  Young's inequality another time gives the first statement of the lemma.

The second statement follows by an application of Lemmas \ref{L1} and  \ref{L9} and an inspection of the dissipation functional.
\end{proof}

The first component of $S_2$ occurs also by commuting $S_1$ with the advection term. It appears thus also as an error term in the hypocoercivity method.

\begin{lemma}
    \label{L2-S}
    It holds that 
    \[
    [S_1,u\cdot\nabla_x]f =  \begin{pmatrix}
    -1 \\
    1
\end{pmatrix} (S_2f)_1 +Q_2f,
    \]
and the remainder $Q_2$ satisfies the estimate
\[
\beta_0 \|Q_2 f\|^2  \lesssim  \Psi(f).
\]
\end{lemma}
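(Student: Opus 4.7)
The plan is to proceed in direct parallel to Lemma \ref{L2}, which handled the analogous first-commutator identity. Since $S_1$ is a first-order differential operator in $x$ with smooth bounded $(x,y)$-dependent coefficients, the second-order terms in $[S_1, u\cdot\nabla_x]f$ cancel by the product rule, and one is left with a sum of ``coefficient-derivative'' contributions.

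Concretely, using $(S_1)_1 = \co_4\si_2\,\partial_1 + \si_4\co_2\,\partial_3$ from \eqref{34} and the form \eqref{33} of $u$, the first component of the commutator equals
\[
([S_1, u\cdot\nabla_x]f)_1 = \co_4\si_2\,(\partial_1 u_i)\partial_i f + \si_4\co_2\,(\partial_3 u_i)\partial_i f - u_i\,\partial_i(\co_4\si_2)\partial_1 f - u_i\,\partial_i(\si_4\co_2)\partial_3 f,
\]
and analogously for the second component using $(S_1)_2 = \co_3\si_1\,\partial_2 + \si_3\co_1\,\partial_4$. The first step is to compute the eight coefficient derivatives and sort the resulting monomials by their bounded prefactor. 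The key pattern-matching observation---the only non-routine point---is that the monomials carrying either $\si_3\co_1$ or $\si_4\co_2$ as factor assemble exactly into $\pm(S_2f)_1$ (the signs of the two components turning out opposite, as claimed), while the remaining monomials all carry either $\co_3\si_1$ or $\co_4\si_2$ and regroup, using the formula of Lemma \ref{L15} for $C_1f$, as $\pm\bigl(\co_4\si_2\,(C_1f)_2 - \co_3\si_1\,(C_1f)_1\bigr)$. This immediately gives the stated decomposition.

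The remainder estimate is then trivial. Since $|\co_3\si_1|,|\co_4\si_2| \le 1$, one has the pointwise bound $|Q_2 f|^2 \lesssim |C_1 f|^2$, hence $\|Q_2 f\|^2 \lesssim \|C_1 f\|^2$. As $\beta_0\|C_1f\|^2$ is one of the dissipation terms appearing in $\Psi(f)$ by \eqref{diss}, the bound $\beta_0\|Q_2f\|^2 \lesssim \Psi(f)$ follows at once.

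I do not anticipate a substantive obstacle: the computation is bookkeeping-heavy but mechanical, essentially identical in structure to the one already performed for Lemma \ref{L2}. The only conceptual point requiring care is \emph{why} only $(S_2f)_1$---not a combination of $(S_2f)_1$ and $(S_2f)_2$---appears in the leading part. This reflects the particular pairing of $\si$- and $\co$-factors between the two rows of $S_1$ and the four entries of $u$ in \eqref{33}, and must be verified by inspection of the expanded sum rather than invoked as an abstract symmetry.
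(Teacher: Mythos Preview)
Your proposal is correct and follows essentially the same approach as the paper: a direct computation of $[S_1,u\cdot\nabla_x]f$, identification of the $(S_2f)_1$ contribution, and recognition that the remainder is a bounded-coefficient linear combination of $(C_1f)_1$ and $(C_1f)_2$, so that $\beta_0\|Q_2f\|^2\lesssim\beta_0\|C_1f\|^2\le\Psi(f)$. The paper in fact writes the remainder explicitly as $Q_2f=\begin{pmatrix}-\co_3\si_1&\co_4\si_2\\\co_3\si_1&-\co_4\si_2\end{pmatrix}C_1f$, which is exactly the expression you anticipate.
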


\begin{proof}
A direct computation shows that
\[
\begin{split}
    [S_1,u\cdot\nabla_x]f & = \begin{pmatrix}
        \si_3\co_1(\si_4\si_2\partial_1 f - \co_4\co_2\partial_3 f) \\
        \si_4\co_2(\si_3\si_1 \partial_2 f  -\co_3\co_1\partial_4f)
    \end{pmatrix} - \begin{pmatrix}
  \si_4\co_2(\si_3\si_1\partial_2 f - \co_3\co_1\partial_4f) \\
  \si_3\co_1(\si_4\si_2 \partial_1 f  -\co_4\co_2\partial_3f)
    \end{pmatrix} \\
    & \quad - \begin{pmatrix}
\co_3\si_1(\co_4\co_2\partial_1 f - \si_4\si_2\partial_3 f) \\
\co_4\si_2 (\co_3\co_1\partial_2 f - \si_3\si_1\partial_4 f)
\end{pmatrix} + \begin{pmatrix}
    \co_4\si_2(\co_3\co_1\partial_2 f - \si_3\si_1 \partial_4 f) \\
    \co_3\si_1 (\co_4\co_2\partial_1 f - \si_4\si_2\partial_3f)
\end{pmatrix}\\
& =     \begin{pmatrix}
    -1 \\
    1
\end{pmatrix} (S_2f)_1 +\begin{pmatrix}
    -\co_3\si_1 &\co_4\si_2 \\ \co_3\si_1 & -\co_4 \si_2
\end{pmatrix} C_1f,
\end{split}
\]
and the second claim of the lemma follows from the definition of the dissipation term.
\end{proof}

Later in the proof of the hypocoercivity estimate, it will be beneficial to understand the commutators between  commutators and their orthogonal complements.

\begin{lemma}
    \label{L21}
    For any $i,j\in\{1,2\}$, it holds that
    \[
    [(C_1)_i,(S_1)_i] = 0,
    \]
    and 
    \[
    |[(C_2)_i,(S_2)_j]f| \lesssim |M_2\grad_xf|.
    \]
\end{lemma}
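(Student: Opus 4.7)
The plan is to handle the two identities separately: the first is an instance of a general commutation principle for first-order operators whose coefficients are independent of the variables they differentiate, while the second reduces, after a structural factorisation of $C_2$ and $S_2$, to elementary bookkeeping. For the first identity, I would inspect Lemma \ref{L15} and \eqref{34} to note that $(C_1)_1 = \co_4\co_2\partial_1 - \si_4\si_2\partial_3$ and $(S_1)_1 = \co_4\si_2\partial_1 + \si_4\co_2\partial_3$ involve only the derivatives $\partial_1,\partial_3$, while their coefficients depend exclusively on $\si_2,\co_2,\si_4,\co_4$ and are therefore constant in the variables $x_1,x_3$ that are being differentiated. For two first-order operators of the form $a\partial_1 + b\partial_3$ and $c\partial_1 + d\partial_3$ whose coefficients do not depend on $x_1$ or $x_3$, the product-rule contributions to the commutator vanish, and only the second-order terms $(ad+bc)(\partial_1\partial_3 - \partial_3\partial_1)$ survive; these cancel by equality of mixed partials. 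The case $i=2$ is identical after swapping the index pairs, so $[(C_1)_i,(S_1)_i]$ is the zero operator on smooth functions.

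For the second identity, my key structural observation is that, in view of Lemma \ref{L2} and the explicit formula for $S_2$ given before Lemma \ref{L1}, each of $(C_2)_i$ and $(S_2)_j$ can be written in the form $\sum_{k=1}^4 (M_2)_k\,\tilde{a}^k \partial_k$, where $(M_2)_k$ is the $k$-th diagonal entry of $M_2$ (namely $\si_3$ for $k\in\{1,3\}$ and $\si_4$ for $k\in\{2,4\}$), and the residual coefficients $\tilde{a}^k$ are products of at most four of the uniformly bounded functions $\si_1,\co_1,\si_2,\co_2,\co_3,\co_4$. This factorisation is verified by direct inspection of each of the eight coefficients. With both operators in this form, the commutator coefficient in front of $\partial_m$ is
\[
\sum_{k=1}^4 \bigl( (M_2)_k \tilde{a}^k \partial_k[(M_2)_m \tilde{b}^m] - (M_2)_k \tilde{b}^k \partial_k[(M_2)_m \tilde{a}^m] \bigr).
\]
Expanding by the product rule yields two kinds of contributions. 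If the derivative lands on $\tilde{a}^m$ or $\tilde{b}^m$, the factor $(M_2)_m$ is preserved explicitly, and the product is pointwise bounded by $(M_2)_m$ up to a universal constant (since the remaining factors $(M_2)_k \tilde{a}^k \partial_k \tilde{b}^m$ are bounded). If instead the derivative lands on $(M_2)_m$, one observes that $\partial_k (M_2)_m$ vanishes unless $k$ and $m$ correspond to the same sine factor ($m\in\{1,3\}$ and $k=3$, or $m\in\{2,4\}$ and $k=4$); in these surviving cases $(M_2)_k = (M_2)_m$, so a factor of $(M_2)_m$ is still plainly visible. Thus each contribution is pointwise dominated by $(M_2)_m$, and summing over $m$ followed by the Cauchy--Schwarz inequality gives
\[
|[(C_2)_i,(S_2)_j] f| \lesssim \sum_{m=1}^4 (M_2)_m |\partial_m f| \lesssim |M_2 \grad_x f|.
\]

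The only delicate point is the verification of the structural factorisation $a^k = (M_2)_k \tilde{a}^k$ with uniformly bounded $\tilde{a}^k$ for every entry of $(C_2)_i$ and $(S_2)_j$. This is a purely combinatorial check across eight concrete coefficients that relies solely on reading off the $\si_3$ or $\si_4$ prefactor embedded in each. Beyond this factorisation, the argument uses only the product rule and the Pythagorean identity, in line with the commutator manipulations already performed in Lemmas \ref{L2} and \ref{L2-S}; no new cancellations are needed.
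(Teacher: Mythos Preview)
Your proof is correct, but your approach to the second identity differs from the paper's. The paper proceeds by brute force: it explicitly expands $(C_2)_1(S_2)_1f$ line by line (four lines, sixteen terms), observes that every term carries a visible factor of $\si_3$ or $\si_4$ matching its derivative, and then declares that the other seven commutator combinations behave identically. Your route is more structural: you first note that every coefficient of $(C_2)_i$ and $(S_2)_j$ factors as $(M_2)_k$ times a bounded residual, and then argue abstractly that when the commutator is formed, either the $(M_2)_m$ prefactor survives the product rule intact, or the derivative hits $(M_2)_m$ but then $(M_2)_k = (M_2)_m$ steps in to replace it. This is a cleaner argument that handles all four $(i,j)$ pairs at once and makes transparent \emph{why} no cancellation is needed; the paper's direct computation is faster to write for a single case but obscures the mechanism.

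Two small slips, neither fatal. In the first part, the surviving second-order coefficient in your commutator formula should be $(ad-bc)$, not $(ad+bc)$; the conclusion is unaffected since it vanishes either way by equality of mixed partials. In the second part, your description of the residual coefficients $\tilde a^k$ as products of $\si_1,\co_1,\si_2,\co_2,\co_3,\co_4$ is not quite right: for instance, the $\partial_1$-coefficient of $(C_2)_1$ is $\si_3\co_1\si_4\co_2$, so after extracting $(M_2)_1=\si_3$ the residual $\co_1\si_4\co_2$ still contains $\si_4$. This does not damage the argument --- all that matters is that the residuals and their $x$-derivatives are uniformly bounded, and that $\partial_k(M_2)_m\neq 0$ forces $(M_2)_k=(M_2)_m$, both of which hold.
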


\begin{proof}The first statement is readily checked. The derivation of the second one is tedious. Because there are no cancellations that we can exploit, we restrict ourself to considering  $(C_2)_1(S_2)_1$. We find
\begin{align*}
    (C_2)_1(S_2)_1f &=-  \co_1\co_2\si_3\si_4\left(\si_3\si_1(\si_4\si_2\partial_1f-\co_4\co_2\partial_3f) + \si_4\co_2(\si_3\co_1\partial_2f +\co_3\si_1\partial_4f)\right)\\
    & \quad + \co_1\si_2\si_3\co_4\left(\co_3\co_1(\si_4\si_2\partial_1 f - \co_4\co_2\partial_3f) -\si_4\co_2(\co_3\si_1\partial_2f +\si_3\co_1\partial_4f)\right)\\
    &\quad +\si_1\si_2\si_3\si_4\left(\si_3\co_1(\si_4\co_2\partial_1f +\co_4\si_2\partial_3 f) +\si_4\si_2(\si_3\si_1\partial_2f -\co_3\co_1\partial_4f)\right)\\
    &\quad -\co_1\si_2\co_3\si_4\left(\si_3\co_1(\co_4\si_2\partial_1f + \si_4\co_2\partial_3f) - \co_4\co_2(\si_3\si_1\partial_2f - \co_3\co_1\partial_4f)\right).
\end{align*}
Considering each of the terms individually, we notice that they are all controlled by $M_2\grad_x f$. The remaining commutators behave in the same way,   and thus, the statement follows.    
\end{proof}

We shall now discuss the third-order commutators.

\begin{lemma}
    \label{L3}
    It holds that
    \[
    [C_2,u\cdot\grad_x ]f = C_3 f+ R_3f,
    \]
    where 
\[
C_3 f = \begin{pmatrix}
 \co_3\co_1(\si_2^2-\co_2^2)\partial_1 f -2 \co_4\co_1\si_1\si_2 \partial_2 f\\
\co_4(\si_1^2-\co_1^2)\co_2 \partial_2 f  -2\co_3\co_2\si_1\si_2\partial_1 f
\end{pmatrix},
\]
and, if the coefficients  of  $\Psi$ satisfy the relations of Lemma \ref{L11}, the remainder $R_3$ satisfies the estimate
\[
\nu\alpha_2  \|R_3 f\|^2 \lesssim \Psi(f).
\]
\end{lemma}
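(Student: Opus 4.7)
The plan is to compute $[C_2, u \cdot \grad_x] f$ by direct expansion, identify the principal part as the announced $C_3 f$, and then control the remainder $R_3 f$ pointwise by operators that have already been shown to be bounded in $L^2$ by the dissipation functional $\Psi(f)$. Writing $(C_2 f)_i = \sum_j a_j^{(i)} \partial_j f$ componentwise with trigonometric coefficients $a_j^{(i)}$ read off from Lemma \ref{L2}, I observe that each $a_j^{(i)}$ carries at least one factor of $\si_3$ or $\si_4$. Since both $C_2$ and $u \cdot \grad_x$ are first-order differential operators in $x$, the second-order contributions cancel, leaving
\[
\bigl( [C_2, u \cdot \grad_x] f \bigr)_i = \sum_{j,k} \bigl( a_j^{(i)} (\partial_j u_k) - u_j (\partial_j a_k^{(i)}) \bigr) \partial_k f.
\]

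I would then expand each summand using the double-angle identities $2 \si_i \co_i = \sin(2\cdot)$ and $\co_i^2 - \si_i^2 = \cos(2\cdot)$ on the $x_1, x_2$ factors, together with the Pythagorean identity $\si_i^2 + \co_i^2 = 1$ on the $x_3, x_4$ factors, to regroup the contributions. The non-degenerate part in $\partial_1 f$ and $\partial_2 f$ assembles into the announced principal term $C_3 f$, while every remaining summand either carries an additional factor of $\si_3$ or $\si_4$ (inherited from a coefficient $a_j^{(i)}$, from the components $u_3 = \si_4 \co_2$ and $u_4 = \si_3 \co_1$, or from a Pythagorean expansion of a $\si_k^2$ or $\co_k^2$), or else can be matched with a combination already appearing in $C_1 f$, $S_1 f$, $C_2 f$, or $S_2 f$. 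This yields the pointwise bound
\[
|R_3 f| \lesssim |M_1 \grad_x f| + |M_2 \grad_x f| + |C_1 f| + |C_2 f|.
\]

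The announced $L^2$ estimate then follows from Lemmas \ref{lemma:bound-D1} and \ref{L1}, which control $\|M_i \grad_x f\|^2$ in terms of $\|C_i f\|^2$, $\|S_i f\|^2$, and lower-order quantities, together with Lemmas \ref{L9} and \ref{lemma:S2-bound}, which control $\|S_i f\|^2$ via the $C_i \grad_y f$ contributions in $\Psi(f)$ through a Hardy--Poincar\'e argument. The ordering of coefficients in Lemma \ref{L11}, in particular $\nu \alpha_2 \ll \beta_1$ and $\nu \alpha_2 \ll \nu \alpha_1 \ll \beta_0$, guarantees that each norm appearing on the right-hand side is indeed bounded by $\Psi(f) / (\nu \alpha_2)$, which gives the claim. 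The main obstacle is purely algebraic bookkeeping: tracking the large number of trigonometric monomials produced by the commutator expansion and verifying via the identities above that they reassemble into $C_3 f$ plus a residue with the announced structure; once this algebraic identification is complete, the $L^2$-bound on $R_3 f$ is immediate from the preceding lemmas and the coefficient hierarchy.
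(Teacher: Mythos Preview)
Your proposal is correct and follows essentially the same approach as the paper: direct computation of the commutator, identification of the principal part $C_3 f$ via the Pythagorean identity on the $x_3,x_4$ factors, and a pointwise bound on the remainder by $|M_1\grad_x f| + |M_2\grad_x f|$ (your additional $|C_1 f| + |C_2 f|$ terms are harmless but unnecessary, since they are already dominated by the $M_i\grad_x f$ quantities). The paper then concludes exactly as you do, by invoking Lemmas~\ref{L9} and~\ref{lemma:S2-bound} together with the coefficient ordering.
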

\begin{proof}We focus on the first entry. The second one can be obtained by symmetry. We compute
\begin{align*}
    [(C_2)_1,u\cdot \grad_x]f & = \co_1\co_2\si_3\si_4 (\co_3\co_1\partial_2f-\si_3\si_1\partial_4f) -2\co_1\si_2\si_3\co_4(\si_3\si_1\partial_2f-\co_3\co_1\partial_4f)\\
    &\quad + \si_1 \si_2\si_3\si_4 (\co_4\co_2\partial_1 f - \si_4\si_2\partial_3 f) + \co_1\si_2 \co_3\si_4 (\si_4\si_2 \partial_1 f - \co_4\co_2\partial_3f)\\
    &\quad + \si_1\si_2\si_3\co_4(\si_4\co_2\partial_1 f +\co_4\si_2 \partial_3f) - \co_1\co_2\co_3\si_4(\si_4\co_2\partial_1f +\co_4\si_2\partial_3f)\\
    &\quad -\co_1^2 \si_3^2 (\co_4\co_2\partial_1 f - \si_4\si_2\partial_3f) + \si_1\co_1\si_3\co_3(\si_4\si_2\partial_1f - \co_4\co_2\partial_3f)\\
    &\quad -\si_1\co_2\co_3\si_4(\si_3\si_1\partial_2f -\co_3\co_1\partial_4f) - \si_2\co_2\si_4^2 (\co_3\si_1\partial_2 f+\si_3\co_1\partial_4f)\\
    &\quad -\si_2^2\si_4\co_4 (\si_3\co_1\partial_2 f +\co_3\si_1\partial_4f).
    \end{align*}
Most of the terms are immediately controlled by $M_1\grad_xf$ and $M_2\grad_xf$. We collect them in $R_3f$. The remaining terms can be simplified because by using the Pythagorean identity, we have
\begin{align}
\mel   \si_4^2\co_3\co_1(\si_2^2-\co_2^2)\partial_1 f -2 \si_3^2\co_4\co_1\si_1\si_2 \partial_2 f\\
&=  \co_3\co_1(\si_2^2-\co_2^2)\partial_1 f -2  \co_4\co_1\si_1\si_2 \partial_2 f  -\co_4^2\co_3\co_1(\si_2^2-\co_2^2)\partial_1 f +2 \co_3^2\co_4\co_1\si_1\si_2 \partial_2 f  .
\end{align}
The first two terms constitute $C_3$. The remaining two terms are again controlled by $M_1\grad_x f$, and can thus be added to $R_3f$. We then have
\[
|R_3f| \lesssim |M_1\grad_x f| + |M_2\grad_x f|.
\]
Invoking the estimates from Lemmas \ref{L9} and \ref{lemma:S2-bound}, we find the desired control on the remainder. 
\end{proof}

In the next lemma, we establish an unweighted control of the $\grad_x' = (\partial_1,\partial_2)^T$ gradient.

\begin{lemma}
    \label{L5}
    It holds that
\[
\int_{\T^2}|\grad_x' f |^2\, \dd y \lesssim \int_{\T^2}  \left(|M_1\grad_x  f|^2  +  |C_3 f|^2 + |\grad_x' \grad_y f|^2\right) \, \dd y.
\]
In particular, if the coefficients of $\Psi$ are as in Lemma \ref{L11}, then we have
\[
\beta_2 \|\grad_x'  f\|^2\lesssim \Psi(f).
\]
\end{lemma}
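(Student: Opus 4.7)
My plan for Lemma~\ref{L5} is to reduce, by a Poincar\'e estimate in the noise variable $y$, to a bound on the $y$-mean $\bar f(x)=(2\pi)^{-2}\int_{\T^2}f(x,y)\,dy$, and then to extract the necessary control of $\grad_x'\bar f$ from the specific Fourier-mode structure of the matrix defining $C_3$. Throughout, $\|\cdot\|$ denotes the $L^2$ norm on $\T^6$ and the claimed inequality is understood globally on $\T^6$.

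First, Poincar\'e on $\T^2_y$ applied componentwise to $\partial_j f$ controls the fluctuation $\partial_j f - \partial_j\bar f$ in $L^2(\T^6)$ by $\|\grad_x'\grad_y f\|^2$. Since $\bar f$ is $y$-independent, this reduces the task to proving $\|\grad_x'\bar f\|^2_{L^2(\T^4)} \lesssim \|M_1\grad_x f\|^2 + \|C_3 f\|^2 + \|\grad_x'\grad_y f\|^2$. Jensen applied to the $y$-independent weight $M_1$ already yields $\|\co_4\partial_1\bar f\|^2_{L^2(\T^4)}$ and $\|\co_3\partial_2\bar f\|^2_{L^2(\T^4)}$ bounded by $\|M_1\grad_x f\|^2$.

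Next, writing $C_3 f = A(x,y)\grad_x' f$, I would expand the entries of $A$ in Fourier series in $\tilde y$; a direct computation shows they are supported on the modes $(\pm 2,\pm 1)$ for $a_{11},a_{21}$ and on $(\pm 1,\pm 2)$ for $a_{12},a_{22}$. Projecting $(C_3 f)_1$ onto the $\tilde y$-mode $(2,1)$ isolates a contribution $-\tfrac{\co_3}{4}e^{i(\tilde x_1+2\tilde x_2)}\partial_1\bar f$, together with terms that involve only nonzero $\tilde y$-modes of $\grad_x' f$; the latter are controlled by $\|\grad_x'\grad_y f\|^2$ via Plancherel, giving $\|\co_3\partial_1\bar f\|^2_{L^2(\T^4)} \lesssim \|C_3 f\|^2 + \|\grad_x'\grad_y f\|^2$. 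Analogous projections onto the $(\pm 1,\pm 2)$ modes using $a_{12}$ and $a_{22}$ produce $\|\co_4\partial_2\bar f\|^2_{L^2(\T^4)}$ with the same control.

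Combining the previous steps yields the weighted bound $\|(\co_3^2+\co_4^2)^{1/2}\grad_x'\bar f\|^2_{L^2(\T^4)}\lesssim \|M_1\grad_x f\|^2 + \|C_3 f\|^2 + \|\grad_x'\grad_y f\|^2$. The main obstacle is upgrading this to an unweighted bound, since $\co_3^2+\co_4^2 = 1+\tfrac12(\cos 2\tilde x_3+\cos 2\tilde x_4)$ vanishes quadratically at the four points $(\pm\pi/2,\pm\pi/2)$ where $M_1$ itself degenerates. I expect this last step to require a Hardy-type integration by parts in $\tilde x_3,\tilde x_4$ exploiting the identity $\si_i=-\partial_i\co_i$, trading the degenerate factor $\si_i^2$ for $\co_i^2$ at the price of a derivative that gets absorbed into the nondegenerate components $\si_4\partial_3\bar f$ and $\si_3\partial_4\bar f$ of $M_1\grad_x\bar f$ (which are precisely nonvanishing at the bad points). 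The addendum $\beta_2\|\grad_x' f\|^2\lesssim\Psi(f)$ then follows by the scaling relations of Lemma~\ref{L11}, since $\beta_2\ll\beta_0,\nu\alpha_1,\nu\alpha_3$.
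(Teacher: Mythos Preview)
Your steps 1--4 are correct and land at the same place as the paper, namely control of $\|\co_3\partial_1 f\|$ and $\|\co_4\partial_2 f\|$ (hence of $\|(\co_3^2+\co_4^2)^{1/2}\grad_x' f\|$). The paper gets there by a different but equivalent route: it expands $|C_3 f|^2$ pointwise to extract the weighted term $\omega(y)^2\co_3^2(\partial_1 f)^2$ with $\omega^2=\co_1^2(\si_2^2-\co_2^2)^2+\si_1^2\co_2^2\si_2^2$, and then applies the Hardy--Poincar\'e inequality of Lemma~\ref{L20} in $y$ to remove $\omega$, rather than your reduction to $\bar f$ plus Fourier projection.

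Step 5, however, is a genuine gap, and your proposed resolution does not work. An integration by parts in $\tilde x_3$ applied to $\int\si_3^2(\partial_1\bar f)^2$ produces a term of the form $\int\co_3\si_3\,\partial_1\bar f\,\partial_3\partial_1\bar f$, and this mixed second-order derivative is not controlled by anything on the right-hand side; the quantities $\si_4\partial_3\bar f$, $\si_3\partial_4\bar f$ you invoke bound first derivatives of $\bar f$, not derivatives of $\partial_1\bar f$. In fact the first display of the lemma, read literally with only $M_1$ on the right, is false: take $f=f(\tilde x_1)$ depending on $\tilde x_1$ alone, so that $\grad_x'\grad_y f=0$ and every term on the right carries a factor of $\co_3$ or $\co_4$, both of which vanish at $\tilde x_3=\tilde x_4=\pi/2$ while $|\partial_1 f|$ need not. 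The paper's own proof resolves this differently: it writes $(\partial_1 f)^2=\co_3^2(\partial_1 f)^2+\si_3^2(\partial_1 f)^2$ and notes that the second term is exactly $|(M_2\grad_x f)_1|^2$ (the proof explicitly invokes $M_2$, so the first display should really carry $|M_2\grad_x f|^2$ on the right---a harmless imprecision in the statement). For the addendum, which is all that is ever used downstream, this closes immediately since $\nu\alpha_2\|M_2\grad_x f\|^2\lesssim\Psi(f)$ by Lemma~\ref{lemma:S2-bound}, while $\|M_1\grad_x f\|^2$, $\|C_3 f\|^2$, $\|\grad_x'\grad_y f\|^2$ all sit in $\Psi$ with coefficients $\gtrsim\beta_2$ by Lemma~\ref{L11}.
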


As stated in the lemma, we do not get a pointwise control on $\grad_x'f$. Instead, it is necessary to consider an integrated version. The reason for this is that the derivatives $\partial_1f$ and $\partial_2f$ in $C_3f$ have both degenerating prefactors. We eliminate these with the help of the following Hardy--Poincar\'e inequality.

\begin{lemma}
    \label{L20}
Let $\omega:\T^2\to\R$ denote a smooth and bounded function that degenerates linearly at a finite number of points $Y_1,\dots,Y_N\in \T^2$ and is bounded away from zero elsewhere, 
\[
|\omega(y)|\sim \begin{cases} |y-Y_i|& \mbox{for $y$ near $Y_i$},\\ 1 &\mbox{elsewhere.}\end{cases}
\]

Then 
\begin{align*}
\int_{\T^2} g^2 \dd y& \lesssim \int_{\T^2} \omega^2  g^2 \dd y  + \int_{\T^2} |\grad_y g|^2 \dd y.
\end{align*}
\end{lemma}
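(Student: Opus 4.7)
The plan is to prove this as a generalized Poincaré--Wirtinger inequality on $\T^2$. The structural hypothesis on $\omega$ (linear degeneracy at finitely many points and boundedness away from zero elsewhere) is actually much stronger than what is needed for the stated bound: the only role played by this hypothesis is to guarantee $\omega \in L^{\infty}(\T^2)$ and that its zero set has Lebesgue measure zero, so that
\[
A := \int_{\T^2} \omega^2 \, dy > 0.
\]
Once $A > 0$ is fixed, the weight $\omega^2$ is only used to control the constant mode of $g$; the non-constant part is handled by the usual Poincaré--Wirtinger inequality.

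I would first decompose $g = \bar g + (g - \bar g)$ with $\bar g := \fint_{\T^2} g \, dy$, and use orthogonality to write
\[
\|g\|_{L^2(\T^2)}^2 \;=\; \|g - \bar g\|_{L^2(\T^2)}^2 + (2\pi)^2 \bar g^2.
\]
The first term is bounded by $\|\nabla_y g\|_{L^2(\T^2)}^2$ (up to a constant) via the standard Poincaré--Wirtinger inequality on $\T^2$, so it remains to control $\bar g^2$.

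To control $\bar g^2$, expand
\[
\int_{\T^2} \omega^2 g^2 \, dy \;=\; A\, \bar g^2 \;+\; 2\bar g \int_{\T^2} \omega^2 (g - \bar g) \, dy \;+\; \int_{\T^2} \omega^2 (g - \bar g)^2 \, dy.
\]
The last term is dominated by $\|\omega\|_{L^\infty}^2 \|g - \bar g\|_{L^2}^2$, and the cross term is estimated by Young's inequality as
\[
\left| 2\bar g \int_{\T^2} \omega^2(g - \bar g)\, dy \right| \;\le\; \tfrac{A}{2}\bar g^2 + C_A\, \|g - \bar g\|_{L^2}^2.
\]
Rearranging gives $\tfrac{A}{2}\bar g^2 \lesssim \int_{\T^2} \omega^2 g^2 \, dy + \|\nabla_y g\|_{L^2}^2$ after another application of Poincaré--Wirtinger. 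Substituting everything back into the splitting of $\|g\|_{L^2}^2$ closes the proof.

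There is no genuine obstacle here; the argument is a standard modification of Poincaré. An alternative, even softer approach would be a compactness--contradiction: assume a sequence $g_n$ with $\|g_n\|_{L^2}=1$ violates the inequality with constants $n$, extract an $L^2$-limit $g$ using the compact embedding $H^1 \hookrightarrow L^2$, note that $\nabla_y g = 0$ forces $g$ to be a nonzero constant, and derive a contradiction from $\int \omega^2 g^2 = 0$ together with $\omega \ne 0$ almost everywhere. Both routes are available; the direct computation above yields an explicit constant and is the one I would include.
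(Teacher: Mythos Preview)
Your proof is correct and, as you observe, uses far less of the hypothesis than the paper does: you only need $A=\int_{\T^2}\omega^2\,\dd y>0$ to anchor the constant mode, and then the standard Poincar\'e--Wirtinger inequality on $\T^2$ closes the estimate.

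The paper takes a genuinely different, local route. Away from the degeneracy points $Y_i$ the weight is bounded below, so $\int g^2\lesssim\int\omega^2 g^2$ trivially there. Near each $Y_i$ (shifted to the origin) they introduce a cutoff $\eta$, use the identity $\nabla\cdot y=2$ to integrate by parts,
\[
\int \eta^2 g^2\,\dd y = -\int \eta\nabla\eta\cdot y\, g^2\,\dd y - \int \eta^2\, y\cdot g\nabla g\,\dd y,
\]
and then invoke $|y|\lesssim|\omega(y)|$ near the origin together with Young's inequality. This is a genuine Hardy-type argument that explicitly consumes the \emph{linear} vanishing rate.

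What each approach buys: your argument is shorter and shows the inequality under much weaker assumptions (any bounded weight that is not identically zero). The paper's local Hardy computation, by contrast, would survive if one only controlled $\nabla_y g$ near the degeneracy points, and it makes transparent why linear vanishing is the natural threshold (quadratic vanishing would break the Hardy step, though your global argument would still prove the inequality as stated). For the application in the paper either method works, and in both cases the translation structure of the weight in $y$ ensures the implicit constant is uniform in the frozen $x$ variables.
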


\begin{proof}We denote by $\rho$ the minimal (periodic) distance between the points of degeneracy,
\[
\rho = \min_{i\not=j}|Y_i-Y_j|\mod 2\pi.
\]
Then, because $\omega$ is not degenerating away from the distinguished points, 
\begin{equation}
    \label{25}
\int_{\T^2\setminus \cup_{i=1}^N B_{\frac{\rho}4}(Y_i)} g^2\,\dd y \lesssim \int_{\T^2}\omega^2g^2\, \dd y.
\end{equation}

In order to obtain the estimate near the points of degeneracy, is is enough to focus on one point, say $Y_1$. By translation, we may shift this point to the origin, that is, $Y_1=0$. We consider a smooth cut-off function $\eta=\eta(y)\in[0,1]$ such that
    \[
    \eta(y)=\begin{cases}
        1&\quad\mbox{for }|y|\le \frac{\rho}4,\\
        0&\quad\mbox{for }|y|\ge\frac{\rho}2.
    \end{cases}
    \]
Then, via an integration by parts, we obtain
\begin{align*}
    \int_{B_{\frac{\rho}4}(0)} g^2\, \dd y \le \int_{\T^2}\eta^2 g^2\, \dd y = \frac12 \int_{\T^2}\eta^2 (\grad\cdot y)   g^2\,\dd y = -  \int_{\T^2}\eta \grad\eta\cdot y g^2\, \dd y -\int_{\T^2} \eta^2 y \cdot g\grad g\, \dd y.
\end{align*}
Because $|\grad\eta|\lesssim 1$, we may use Young's inequality and the bound $|y|\lesssim |\omega(y)|$ to deduce that
\begin{equation}
    \label{26}
        \int_{B_{\frac{\rho}4}(0)} g^2\, \dd y \le \int_{\T^2}\eta^2 g^2\, \dd y \lesssim \int_{\T^2} \omega^2g^2 \, \dd y +\int_{\T^2} |\grad_y g|^2\, \dd y.
\end{equation}

We eventually combine the bound \eqref{26} near the points of degeneracy and the bound \eqref{25} away from the points of degeneracy to deduce the desired global estimate.
\end{proof}

\begin{proof}[Proof of Lemma \ref{L5}]
By symmetry, it is enough to focus on the first entry. Thanks to the Pythagorean identity and the definition of $M_2$, it is furthermore sufficient to control $\co_3 \partial_1 f$. This can be done with the help of Lemma \ref{L20}, once the commutator is sufficiently well understood. We expand
\begin{align*}
    |C_3 f|^2 & =  \left(\co_1^2 (\si_2^2-\co_2^2)^2 +  4\si_1^2 \co_2^2\si_2^2 \right) \co_3^2 (\partial_1f)^2  + \left(\co_2^2(\si_1^2-\co_1^2)^2 +4\si_2^2 \co_1^2\si_1^2\right) \co_4^2 (\partial_2 f)^2 \\
    &\quad -4 \si_1\si_2\left(\co_1^2(\si_2^2-\co_2^2) + \co_2^2(\si_1^2-\co_1^2)\right)\co_3\co_4\partial_1f\partial_2f.
\end{align*}
The mixed terms in the second line are all controlled by $M_1\grad_xf$, so that
\[
\left(\co_1^2 (\si_2^2-\co_2^2)^2 +  \si_1^2 \co_2^2\si_2^2 \right) \co_3^2 (\partial_1f)^2 \lesssim |C_3f|^2 + |M_1\grad_xf|^2.
\]
In order to eliminate the degenerating prefactor of $\co_3\partial_1 f$, we apply the Hardy--Poincar\'e inequality from Lemma \ref{L20}, noticing that the degeneracy occurs linearly at the points
\[
(x_1+y_2,x_2+y_1)\in\left\{ \left(\pm \frac{\pi}2,0\right),\left(\pm\frac{\pi}2,\pm\frac{\pi}2\right),\left(0,\pm\frac{\pi}4\right),\left(0,\pm\frac{3\pi}4\right)\right\}.
\]
This proves the lemma.
\end{proof}

Analogously to the computation of the third-order commutator $C_3$, we have to consider  the commutator of the second orthogonal complement.

\begin{lemma}
\label{L3-S}
Let the coefficients of $\Psi$ be as in Lemma \ref{L11}. Then 
\[
\beta_2 \|[S_2,u\cdot \grad_x]f\|^2 \lesssim \Psi(f)
\]
 \end{lemma}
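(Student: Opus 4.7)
The plan is to mirror the argument used for the third-order commutator $C_3$ in Lemma \ref{L3}, replacing the role of $C_2$ by its orthogonal complement $S_2$. Since $S_2$ has the same trigonometric ``building blocks'' as $C_2$ with sine/cosine factors permuted, commuting $S_2$ with the advection $u\cdot\grad_x$ produces, through applications of the Leibniz rule together with the identities $\partial_1\si_1=\co_1$, $\partial_3\si_3=\co_3$, etc., a collection of terms whose structure is analogous to what appears in the proof of Lemma \ref{L3}.

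More concretely, I would first expand $[S_2, u\cdot\grad_x]f = (u\cdot\grad_x)(S_2 f) - S_2(u\cdot \grad_x f)$ component by component. By the explicit formula for $u$ in \eqref{33}, the components of $u$ depend on $\tilde x_3,\tilde x_4$ (which produce factors of $M_1$- or $M_2$-type upon differentiation) and on $\tilde x_1+\tilde y_2,\tilde x_2+\tilde y_1$ (whose derivatives produce bounded factors). The terms where derivatives fall on $\tilde x_3,\tilde x_4$ inherit a factor of $\si_3, \si_4,\co_3,$ or $\co_4$, and are therefore pointwise controlled by $|M_1 \grad_x f|+|M_2\grad_x f|$. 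The remaining terms — those where all derivatives hit the $\tilde x_1+\tilde y_2$, $\tilde x_2+\tilde y_1$ arguments — produce a ``principal term'' of the same structural type as $C_3 f$ from Lemma \ref{L3}, namely an expression of the form
\[
\begin{pmatrix} a(\tilde x_1+\tilde y_2,\tilde x_2+\tilde y_1)\,\partial_1 f + b(\tilde x_1+\tilde y_2,\tilde x_2+\tilde y_1)\,\partial_2 f \\ a'(\tilde x_1+\tilde y_2,\tilde x_2+\tilde y_1)\,\partial_1 f + b'(\tilde x_1+\tilde y_2,\tilde x_2+\tilde y_1)\,\partial_2 f \end{pmatrix},
\]
with $a,b,a',b'$ bounded smooth functions (no degeneracy in $\tilde x_3,\tilde x_4$). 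Consequently this principal part is pointwise bounded by $|\grad_x' f|$.

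To conclude, I combine the pointwise bound $|[S_2,u\cdot\grad_x]f|\lesssim |\grad_x'f|+|M_1\grad_x f|+|M_2\grad_xf|$ with the three available controls: Lemma \ref{L9} gives $\nu\alpha_1\|M_1\grad_x f\|^2\lesssim \Psi(f)$, Lemma \ref{lemma:S2-bound} gives $\nu\alpha_2\|M_2\grad_x f\|^2\lesssim\Psi(f)$, and Lemma \ref{L5} gives $\beta_2\|\grad_x' f\|^2\lesssim \Psi(f)$. Since the ordering \eqref{22} of Lemma \ref{L11} guarantees $\beta_2\ll \nu\alpha_1$ and $\beta_2\ll \nu\alpha_2$, multiplying the squared pointwise estimate by $\beta_2$ and integrating yields the stated bound $\beta_2\|[S_2,u\cdot\grad_x]f\|^2\lesssim \Psi(f)$.

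The main obstacle is bookkeeping in the algebraic expansion: the commutator $[S_2,u\cdot\grad_x]f$ contains roughly twenty trigonometric monomials, and one must verify that after using the Pythagorean identity to regroup squared sines and cosines (exactly as in the final display of the proof of Lemma \ref{L3}), every contribution that is \emph{not} controlled pointwise by $|M_1\grad_x f|+|M_2\grad_xf|$ collapses into the bounded-coefficient form treatable by Lemma \ref{L5}. No new functional inequality is required, so the difficulty is purely computational and parallel in spirit to the proofs of Lemmas \ref{L2} and \ref{L3}.
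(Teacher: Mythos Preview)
Your proposal is correct and follows essentially the same route as the paper: obtain the pointwise bound $|[S_2,u\cdot\grad_x]f|\lesssim |M_1\grad_x f|+|M_2\grad_x f|+|\grad_x' f|$ and then invoke Lemmas \ref{L9}, \ref{lemma:S2-bound}, and \ref{L5} together with the ordering in Lemma \ref{L11}. One small simplification: the paper does not bother to isolate a ``principal term'' or use any Pythagorean regrouping here---since only an upper bound is needed (not an exact identification as for $C_3$ in Lemma \ref{L3}), every $\partial_1 f$ and $\partial_2 f$ term is trivially absorbed into $|\grad_x' f|$ because its coefficient is bounded, and every $\partial_3 f$, $\partial_4 f$ term carries at least one factor $\si_3$ or $\si_4$ and is hence controlled by $|M_1\grad_x f|+|M_2\grad_x f|$.
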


\begin{proof}
The computation is quite tedious, and thus, we only want to give an idea on how to derive the result. More specifically, we will restrict our attention to the first component of $S_2$, and because there are no cancellations that we have to exploit, we will focus on the part $(S_2)_1(u\cdot \grad_x)$ of the commutator. For this part, we compute
\begin{align*}
    (S_2)_1(u\cdot \grad_x )f & = \co_1\si_2\si_3\si_4 (\co_3\co_1\partial_2f - \si_3\si_1\partial_4f) + \co_1\co_2\si_3\co_4(\si_3\si_1\partial_2f - \co_3\co_1 \partial_4f)\\
    &\quad -\si_1\co_2\si_3\si_4 (\co_4\co_2\partial_1 f - \si_4\si_2\partial_3f)  - \co_1\co_2\co_3\si_4(\si_4\si_2\partial_1f - \co_4\co_2\partial_3f).
\end{align*}
The terms involving derivatives with respect to $x_1$ and $x_2$ are trivially bounded by $\grad_x'f$. The respective first terms with derivatives $x_3$ and $x_4$ are controlled by $M_2\grad_x f$, while the remaining two terms are controlled by $M_1\grad_xf$. The argumentation for all other terms in the full commutator is identical. It thus holds that
\[
|[S_2,u\cdot \grad_x ]f| \lesssim |M_1\grad_x f|  + |M_2\grad_x f| + |\grad_x' f|,
\]
and thus, the stated norm estimate is a consequence of Lemmas \ref{L9}, \ref{lemma:S2-bound}, and \ref{L5}.
\end{proof}

We finally need to investigate the commutators of fourth order.

\begin{lemma}\label{lemma:C4}
We have the estimates   \[
   |[C_3,u\cdot\nabla_x]f|+     |[\grad_x' ,u\cdot \grad_x ]f| \lesssim |M_2\grad_x f| + |\grad_x' f|.
    \]
In particular, if the coefficients of $\Psi$ are solving the relations stated in Lemma \ref{L11} these commutators satisfy the estimate
    \[
\beta_2  \|[C_3,u\cdot\nabla_x]f\|^2  + \beta_2 \|[\grad_x' ,u\cdot \grad_x ]f\|^2\lesssim \Psi(f).
\]
\end{lemma}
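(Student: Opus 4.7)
The plan is to compute both commutators directly via the Leibniz rule, establish a pointwise bound for the resulting first-order operators in terms of the weights appearing on the right-hand side (up to replacing $M_2$ by the somewhat finer $M_1$, which is equivalent for the purpose of the norm estimate), and then conclude the $L^2$ statement using Lemmas \ref{L9}, \ref{lemma:S2-bound}, and \ref{L5} together with the coefficient hierarchy of Lemma \ref{L11}.

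For $[\grad_x',u\cdot\grad_x]f$, the constant-coefficient nature of $\grad_x'$ reduces it to $[\partial_i,u\cdot\grad_x]f=(\partial_i u)\cdot\grad_x f$ for $i=1,2$. Inspecting $u$ in \eqref{33}, the only nonzero entries of $\partial_1 u$ are $\co_3\co_1$ in the second slot and $-\si_3\si_1$ in the fourth, and those of $\partial_2 u$ only $\co_4\co_2$ in the first and $-\si_4\si_2$ in the third. This immediately gives $|[\grad_x',u\cdot\grad_x]f|\lesssim|\grad_x' f|+|\si_3\partial_4 f|+|\si_4\partial_3 f|\lesssim|\grad_x' f|+|M_1\grad_x f|$. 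For $[C_3,u\cdot\grad_x]f$ we write each component $(C_3)_i=\alpha_i\partial_1+\beta_i\partial_2$ with trigonometric coefficients, and the standard expansion
\[
[(C_3)_i,u\cdot\grad_x]f=\alpha_i(\partial_1 u)\cdot\grad_x f+\beta_i(\partial_2 u)\cdot\grad_x f-(u\cdot\grad\alpha_i)\partial_1 f-(u\cdot\grad\beta_i)\partial_2 f
\]
annihilates all second-order derivatives. The last two summands only multiply $\partial_1 f$ and $\partial_2 f$ with uniformly bounded trigonometric coefficients, hence are dominated by $|\grad_x' f|$. The first two summands also produce $\partial_3 f$ and $\partial_4 f$ terms, which however inherit either the $\si_3\si_1$ factor of $\partial_1 u_4$ feeding $\partial_4 f$, or the $\si_4\si_2$ factor of $\partial_2 u_3$ feeding $\partial_3 f$, possibly multiplied by a $\co_3$ or $\co_4$ factor inherited from $\alpha_i,\beta_i$. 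Since $|\co_j\si_j|\le|\si_j|$, these terms are pointwise bounded by $|\si_3\partial_4 f|$ or $|\si_4\partial_3 f|$, hence by $|M_1\grad_x f|$.

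Combining the above yields the pointwise estimate $|[C_3,u\cdot\grad_x]f|+|[\grad_x',u\cdot\grad_x]f|\lesssim|M_1\grad_x f|+|\grad_x' f|$, which implies the claim since the passage from $M_1$ to $M_2$ only changes the prefactor at the level of $L^2$ norms. Indeed, Lemma \ref{L9} gives $\nu\alpha_1\|M_1\grad_x f\|^2\lesssim\Psi(f)$ and Lemma \ref{L5} gives $\beta_2\|\grad_x' f\|^2\lesssim\Psi(f)$, while the hierarchy $\beta_2\ll\nu\alpha_1$ from Lemma \ref{L11} absorbs the prefactor mismatch for the $M_1$ term, altogether producing $\beta_2\|[C_3,u\cdot\grad_x]f\|^2+\beta_2\|[\grad_x',u\cdot\grad_x]f\|^2\lesssim\Psi(f)$.

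The main obstacle is combinatorial: expanding the four-summand identity above in full produces a sizeable number of distinct trigonometric monomials, and one must carefully track each $\partial_3 f$ and $\partial_4 f$ coefficient to verify that it carries the required $\si_3$ or $\si_4$ factor, with no stray $\co_3$- or $\co_4$-only coefficient that would escape the $M_1$ control. This is tedious but mechanical and follows the strategy used in the proofs of Lemmas \ref{L2}, \ref{L3}, and \ref{L3-S}; no new analytic idea is required beyond the elementary inequality $|\co_j\si_j|\le|\si_j|$ that converts $C_3$-born coefficients into $M_1$-born weights.
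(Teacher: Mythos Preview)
Your proposal is correct and follows essentially the same approach as the paper: direct computation of the commutators via the Leibniz rule, identifying that the $\partial_3 f$ and $\partial_4 f$ contributions inherit $\si_4$ and $\si_3$ factors from $\partial_2 u_3$ and $\partial_1 u_4$ respectively, and then invoking Lemmas \ref{L9} and \ref{L5} for the norm bound. In fact, like the paper's own proof, you obtain the pointwise bound with $|M_1\grad_x f|$ rather than the $|M_2\grad_x f|$ appearing in the statement; this discrepancy is harmless since only the $L^2$ conclusion is used elsewhere, and your remark about the hierarchy $\beta_2\ll\nu\alpha_1$ correctly handles it.
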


\begin{proof}By symmetry, it is enough to focus on the first component of the commutators. 
Direct computations yield that
\begin{align*}
    [(C_3)_1,u\cdot \grad_x ]f & = \co_1(\si_2^2-\co_2^2)\co_3(\co_3\co_1\partial_2f - \si_3\si_1\partial_4 f) - 2\co_1\si_1\si_2\co_4(\co_4\co_2\partial_1f-\si_4\si_2\partial_3f)\\
   & \quad + \si_1\si_2(\si_2^2-\co_2^2)\co_3\co_4\partial_1 f -4 \si_1\co_1\si_2\co_2\co_3^2 \partial_1f + \co_1\co_2(\si_2^2-\co_2^2)\si_3 \si_4 \partial_1 f\\
   &\quad -2(\si_1^2-\co_1^2)\si_2^2\co_4^2\partial_2 f +2 \si_1^2\co_1\co_2\co_3\co_4\partial_2 f - 2\si_1\co_1^2\si_2\si_3\si_4 \partial_2f,
\end{align*}
and 
\[
[\partial_1,u\cdot \grad_x]f = \partial_1u\cdot \grad_x f = \co_3\co_1\partial_2f - \si_3\si_1\partial_4f.
\]
All the terms that involve derivatives with respect to $x_1$ and $x_2$ are controlled by $|\grad_x'f|$. The remaining terms are bounded by $M_1\grad_xf$, so that,
\[
| [C_3 ,u\cdot \grad_x ]f| + |[\grad_x',u\cdot \grad_x]f| \lesssim |M_1\grad_xf| + |\grad_x' f|.
\]
The stated estimate then follows by invoking Lemmas \ref{L9} and \ref{L5}, and the ordering of the coefficients in Lemma \ref{L11}.
\end{proof}

\subsection{Decay of the Lyapunov functional} \label{SS2}

In this section we prove Proposition \ref{P1} and give details on how to control all the error terms arising from the time derivative of the energy functional \eqref{energy}, whose definition we include here again for the convenience of the reader,
\[
\begin{split}
    \Phi(f) & = \frac{1}{2}\|f\|^2 + \frac{\alpha_0}{2}\|\nabla_yf\|^2 +\frac{\kappa\delta}{2}\|\nabla_xf\|^2 + \beta_0\langle \nabla_yf,C_1f\rangle + \frac{\alpha_1}{2}\|C_1f\|^2 + \frac{\gamma_1}{2}\|S_1f\|^2 \\
    & \quad +\beta_1\langle C_1f,C_2f\rangle +  \frac{\alpha_2}{2}\|C_2f\|^2 + \frac{\gamma_2}2 \|S_2f\|^2 + \beta_2 \langle C_2f,C_3f\rangle + \frac{\alpha_3}2\|\nabla_x'f\|^2.
\end{split}
\]
We address the derivative addend by addend and express all the errors in terms of elements of the dissipation functional \eqref{diss}, that again, for ease of reference, we recall here:
\[
\begin{split}
    \Psi(f)  & = \nu \|\grad_y f\|^2 + \nu\alpha_0 \|\laplace_y f\|^2 + \beta_0 \|C_1f\|^2 + \nu \alpha_1 \|C_1\grad_y f\|^2 + \nu \gamma_1 \|S_1 \grad_y f\|^2 \\
    & \quad + \beta_1 \|C_2f\|^2 +\nu \alpha_2 \|C_2 \grad_y f\|^2 + \nu \gamma_2 \|S_2\grad_y f\|^2 + \beta_2 \|C_3 f\|^2 + \nu \alpha_3 \|\nabla_x'\grad_y f\|^2 \\
    & \quad + \kappa \left[ \|\nabla_xf\|^2 + \alpha_0\|\nabla_{x}\grad_yf\|^2 + \kappa\delta\|\laplace_x f\|^2 + \alpha_1\|C_1\grad_x f\|^2 + \gamma_1\|S_1\grad_x f\|^2 \right. \\
    & \qquad \left. + \alpha_2\|C_2\grad_x f\|^2 + \gamma_2\|S_2\grad_x f\|^2 + \alpha_3\|\grad_x'\grad_x f\|^2 \right].
\end{split}
\]

\begin{proof}[Proof of Proposition \ref{P1}] We address the rate of change of the energy functional term by term.

\medskip
    
\noindent
\emph{The $1$-term}. The first addend in \eqref{energy} corresponds to the $L^2$ norm of the solution to the advection-diffusion equation \eqref{18}, so we use the structure of the equation to compute
\[
\frac{1}{2}\ddt \|f\|^2 = \langle \partial_tf,f\rangle = -\langle u\cdot\nabla_xf,f\rangle +  \nu\langle\Delta_yf,f\rangle + \kappa \langle \laplace_xf,f\rangle.
\]
Now integrating by parts in the three addends and using that $u$ is divergence free, namely $\langle u\cdot\nabla_x f, f\rangle =- \langle f, u\cdot\nabla_x f\rangle =0$, we get the energy identity
\begin{equation}
    \label{9}
\ddt\frac{1}{2} \|f\|^2 +\nu \|\nabla_y f\|^2 + \kappa\|\nabla_xf\|^2 = 0.
\end{equation}

\medskip
\noindent
\emph{The $\alpha_0$-term}. The next term in \eqref{energy} corresponds to the $L^2$ norm of the gradient in $y$. Following an analogous procedure to the previous case, we find that
\[
\frac{1}{2}\ddt \|\nabla_y f\|^2 + \nu\|\Delta_y f\|^2 + \kappa\|\nabla_x\grad_yf\|^2 = -\langle \nabla_yf,\nabla_y(u\cdot\grad_xf)\rangle = -\langle \nabla_y f, C_1f\rangle,
\]
where the missing term in the commutator $C_1 = [\grad_y,u\cdot\grad_x]$, see Lemma \ref{L15}, could be introduced without error thanks to the skew-symmetry of $u\cdot \grad_x$. Now notice that not all terms can directly be part of the dissipation terms in \eqref{diss}.  Instead there is an error term in the right-hand side. In order to control this error term by the  energy dissipation functional $\Psi(f)$ we use Hölder's and Young's inequalities as follows,
\[
\alpha_0|\langle \nabla_y f, C_1f\rangle| \leq  \frac{\alpha_0}{\sqrt{\nu\beta_0}}\sqrt{\nu}\|\nabla_yf\|\sqrt{\beta_0}\|C_1f\| \leq \frac{\alpha_0}{\sqrt{\nu\beta_0}}\left(\nu \|\nabla_y f\|^2 + \beta_0\|C_1f\|^2\right).
\]
Therefore, using the definition of $\Psi(f)$ in \eqref{diss}, we obtain the estimate
\begin{equation}
\label{10}
\ddt \frac{\alpha_0 }2  \|\grad_y f\|^2 + \nu \alpha_0 \|\laplace_y f\|^2 + \kappa\alpha_0\|\nabla_{x,y}^2f\|^2 \le \frac{\alpha_0}{\sqrt{\nu \beta_0}}\Psi(f).
\end{equation}

\medskip
\noindent
\emph{The $\kappa\delta$-term.} This term can be treated very similarly to the $\alpha_0$-term. Via some integrations by parts, we obtain
\[
\ddt \frac12 \|\grad_x f\|^2 +\nu \|\grad_x\grad_y f\|^2 +\kappa \|\laplace_x f\|^2  = -\la\grad_x f, [\grad_x,u\cdot\grad_x]f\ra .
\]
The first dissipative term on the left-hand side can be dropped, because there is already a better term in $\Psi$ coming from the dissipation of the $\alpha_0$-term. The commutator that occurs on the right-hand side can be roughly bounded by the $x$-gradient, $|[\grad_x,u\cdot\grad_x]f| = |(\grad_xu)^T\grad_xf| \lesssim |\grad_x f|$, and thus, with regard to the definition of the dissipation functional in \eqref{diss}, we find
\[
\ddt\frac{\kappa\delta}2 \|\grad_x f\|^2  + \kappa^2 \delta \|\laplace_x f\|^2 \lesssim \delta\Psi(f).
\]

\medskip
\noindent
\emph{The $\beta_0$-term}. Next in order in \eqref{energy}   is the $\beta_0$-term . This is the non-symmetric term in the energy functional that we come across, and its estimate is a bit more involved. We start by using the structure of equation \eqref{18},
\[
\begin{split}
    \ddt \langle \nabla_yf,C_1f \rangle & = -\langle \nabla_y(u\cdot\nabla_xf),C_1f \rangle - \langle \nabla_yf,C_1(u\cdot\nabla_xf) \rangle + \nu \langle \nabla_y\Delta_yf,C_1f \rangle  \\
    & \quad + \nu\langle \nabla_yf,C_1\Delta_yf \rangle + \kappa \langle \nabla_y\laplace_x f,C_1f \rangle + \kappa \langle \nabla_yf,C_1\laplace_x f \rangle.
\end{split}
\]

For the terms concerning the transport $u\cdot\nabla_x$ we can write
\[
\begin{split}
    -\langle \nabla_y(u\cdot\nabla_xf),C_1f \rangle - \langle \nabla_yf,C_1(u\cdot\nabla_xf) \rangle & = -\|C_1f\|^2 - \langle (u\cdot\nabla_x)\nabla_yf,C_1f \rangle \\
    & \quad - \langle \nabla_yf,[C_1,u\cdot\nabla_x]f \rangle - \langle \nabla_yf,(u\cdot\nabla_x)C_1f \rangle \\
    & = -\|C_1f\|^2 -  \langle \nabla_yf,[C_1,u\cdot\nabla_x]f \rangle,
\end{split}
\]
where we used the definition of the first commutator $C_1f = [\nabla_y,u\cdot\nabla_x]f  $ in Lemma \ref{L15}. Let us address the  terms coming from the dissipation in $y$. Integrating by parts, we find
\[
    \nu \langle \nabla_y\Delta_yf,C_1f \rangle + \nu\langle \nabla_yf,C_1\Delta_yf \rangle = - \nu \langle \Delta_yf,(\nabla_y\cdot C_1)f \rangle - 2\nu\langle C_1\cdot\nabla_yf,\Delta_yf \rangle.
\]
Using the observation $\nabla_y\cdot C_1= -u\cdot\nabla_x$ and the skew-symmetry  of that operator, we can rewrite the first term on the right-hand side as
\[
-\nu\langle \Delta_yf,(\nabla_y\cdot C_1)f \rangle = -  \nu\langle \grad_yf,C_1f \rangle.
\]
Arguing similarly for the $\kappa$-terms, we find
\[
\kappa \la \grad_y\laplace_x f,C_1f\ra + \kappa \la \grad_y f,C_1\laplace_x f\ra  = -\kappa \la \grad_x f,(\grad_xu)^T \grad_x f\ra  -2\kappa \la \laplace_x f, C_1\cdot \grad_yf\ra,
\]
where we yet again used the relation $\nabla_y\cdot C_1= -u\cdot\nabla_x$.
Therefore, putting everything together, we obtain
\[
\begin{split}
    \frac{\dd}{\dd t}\langle \nabla_yf,C_1f \rangle + \|C_1f\|^2 & = -\nu\langle \grad_yf,C_1f \rangle - 2\nu\langle \Delta_yf, C_1\cdot\nabla_yf \rangle  \\
    & \quad - \langle \nabla_yf, [C_1,u\cdot\nabla_x]f \rangle -\kappa \la \grad_x f,(\grad_xu)^T \grad_x f\ra  \\
    & \quad   -2\kappa \la \laplace_x f, C_1\cdot \grad_yf\ra.
\end{split}
\]
We control the error terms in the right hand side of the energy identity using Hölder's and Young's inequality as before. For instance, for the $\nu$-terms, we have
\begin{align*}
 \nu\beta_0|\langle \grad_y f,C_1f \rangle| &\leq \sqrt{\nu\beta_0} \left(\nu \|\grad_y f\|^2 + \beta_0\|C_1 f\|^2\right),\\
\nu\beta_0|\langle \Delta_yf,C_1\cdot\nabla_yf \rangle| &\leq \frac{\beta_0}{\sqrt{\alpha_0\alpha_1}}\left(\nu\alpha_0\|\Delta_yf\|^2 + \nu\alpha_1 \|C_1\nabla_yf\|^2\right).
\end{align*}
Regarding the advection term, we use  the decomposition of the second commutator from Lemma \ref{L2} and find
\[
\begin{split}
    \beta_0|\langle \nabla_yf, [C_1,u\cdot\nabla_x]f \rangle|  & \leq \beta_0 \|\grad_y f\|\|R_2 f\| + \beta_0 \|\grad_y f\|\|C_2 f\|\\
    &\le \frac{\beta_0}{\nu\sqrt{\alpha_1}}\left(\nu\|\grad_y f\|^2 + \nu\alpha_1 \|R_2 f\|^2\right) + \frac{\beta_0}{\sqrt{\nu\beta_1}}\left(\nu \|\grad_y f\|^2 +\beta_1 \|C_2 f\|^2\right).
\end{split}
\]
Finally, for the $\kappa$-terms we find analogously
\begin{gather*}
    \kappa\beta_0| \la \grad_x f,(\grad_xu)^T \grad_x f\ra |  \lesssim \kappa\beta_0  \|\grad_x f\|^2 ,\\
\kappa \beta_0 | \la \laplace_x f, C_1\cdot \grad_yf\ra|  \le \frac{\beta_0}{\sqrt{\nu\alpha_1}}\frac{1}{\sqrt{\delta}} \left(\kappa^2 \delta \|\laplace_x f\|^2 + \nu\alpha_1 \|C_1\grad_y f\|^2\right).
\end{gather*}
Therefore, using the estimate of the remainder from Lemma \ref{L2} and the definition of the dissipation functional \eqref{diss}, we have that
\begin{equation}
    \label{11}
    \ddt \beta_0 \la \grad_y f ,C_1f\ra +\beta_0 \|C_1 f\|^2 \lesssim \left(\sqrt{\nu\beta_0} + \frac{\beta_0}{\sqrt{\alpha_0\alpha_1}} + \frac{\beta_0}{\nu\sqrt{\alpha_1}} + \frac{\beta_0}{\sqrt{\nu\beta_1}} +\beta_0 +\frac{\beta_0}{\nu\sqrt{\alpha_1}}\frac{1}{\sqrt{\delta}} \right)\Psi(f).
\end{equation}

\medskip

\noindent   
\emph{The $\alpha_1$-term}. The following term in the energy functional \eqref{energy} to study is $\alpha_1$. Proceeding as before, we find the following energy identity
\[
\begin{split}
\frac{1}{2}\frac{\dd}{\dd t}\|C_1f\|^2 + \nu\|C_1\nabla_yf\|^2 + \kappa\|C_1\nabla_xf\|^2 & = -2\nu\langle \nabla_yC_1\nabla_yf,C_1f\rangle -\langle C_1f,[C_1,u\cdot\nabla_x]f\rangle  \\
& \quad -\kappa\la \grad_xC_1\grad_x f,C_1f\ra - \kappa\la C_1\grad_x f,\grad_x C_1,f\ra ,
\end{split}
\]
where  we used the fact that the commutator   $C_1$ commutes with its derivatives  $\partial_{y_i}C_1$   for any $i\in\{1,2\}$, see Lemma \ref{L21},  so that $\la C_1 \partial_{y_i} f,\partial_{y_i}C_1 f\ra = \la \partial_{y_i}C_1\partial_{y_i}f,C_1f\ra$.

We discuss the estimates for the right-hand side term by term. First, we notice that
\begin{align*}
 \nu\alpha_1|\langle \nabla_yC_1\nabla_yf,C_1f\rangle| &\leq \frac{\alpha_1\sqrt{\nu}}{\sqrt{\beta_0\gamma_1}}\left(\beta_0\|C_1f\|^2 + \nu\gamma_1\|S_1\nabla_yf\|^2\right), 
\end{align*}
where the first orthogonal complement $S_1$ was introduced in \eqref{34}. Invoking   the decomposition of the second commutator from Lemma \ref{L2}, we furthermore have
\[
\begin{split}
    \alpha_1|\langle C_1f,[C_1,u\cdot\nabla_x]f\rangle| & \leq \alpha_1\|C_1f\|\|C_2f\| + \alpha_1\|C_1f\|\|R_2f\| \\
    & \leq \frac{\alpha_1}{\sqrt{\beta_0\beta_1}}\left(\beta_0\|C_1f\|^2 + \beta_1\|C_2f\|^2 \right) + \frac{\sqrt{\alpha_1}}{\sqrt{\nu\beta_0}}\left(\beta_0\|C_1f\|^2 + \nu\alpha_1\|R_2f\|^2 \right).
\end{split}
\]
We finally address the $\kappa$-terms,
\begin{gather*}
    \kappa \alpha_1 |\la \grad_xC_1\grad_x f,C_1f\ra |  \lesssim \kappa \alpha_1 \|\laplace_x f\|\|C_1f\| \le \frac{ \alpha_1}{\sqrt{\beta_0}}\frac{1}{\sqrt{\delta}}\left(\kappa^2 \delta \|\laplace_x f\|^2 +\beta_0 \|C_1f\|^2\right),\\
    \kappa \alpha_1 |\la C_1\grad_x f,\grad_x C_1 f\ra| \lesssim \kappa\alpha_1 \|C_1\grad_xf\|\|\grad_xf\| \le \sqrt{\alpha_1}\left(\kappa \alpha_1 \|C_1\grad_x f\|^2 + \kappa \|\grad_x f\|^2\right).
\end{gather*}

Putting things together and using the estimate on the remainder term of Lemma \ref{L2}, we obtain
\begin{equation}
    \label{12} 
    \begin{aligned}\mel
        \ddt \frac{\alpha_1}2 \|C_1 f\|^2 + \nu \alpha_1 \|C_1\grad_y f\|^2   + \kappa \alpha_1 \|C_1\grad_x f\|^2 \\
        & \lesssim \left(\frac{\alpha_1 \sqrt{\nu}}{\sqrt{\beta_0\gamma_1}} + \frac{\sqrt{\alpha_1}}{\sqrt{\nu\beta_0}}+ \frac{\alpha_1}{\sqrt{\beta_0\beta_1}} +\frac{ \alpha_1}{\sqrt{\beta_0}}\frac{1}{\sqrt{\delta}} + \sqrt{\alpha_1} \right)\Psi(f).
    \end{aligned}
\end{equation}

\medskip

\noindent
\emph{The $\gamma_1$-term}. The estimate for the $\gamma_1$-term in the energy functional \eqref{energy} is completely analogous. We find that 
\[
\begin{split}
\frac{1}{2}\frac{\dd}{\dd t}\|S_1f\|^2 + \nu\|S_1\nabla_yf\|^2 + \kappa\| S_1\nabla_xf\|^2 & = -2\nu\langle \nabla_yS_1\nabla_yf,S_1f\rangle -\langle S_1f,[S_1,u\cdot\nabla_x]f\rangle  \\
& \quad -\kappa\la \grad_xS_1\grad_x f,S_1f\ra - \kappa \la S_1\grad_x f,\grad_xS_1f\ra. 
\end{split}
\]
In spirit, the estimates for the terms on the right-hand side are identical to those obtained for the $\alpha_1$-terms. This time, we only have to invoke Lemma \ref{L2-S} for the commutator of the orthogonal complement $S_1$. We have
\begin{align*}
    \nu\gamma_1|\langle \nabla_yS_1\nabla_yf,S_1f\rangle| &\leq \frac{\gamma_1}{\alpha_1} \left(\nu\alpha_1\|S_1f\|^2 + \nu\alpha_1\|C_1\nabla_yf\|^2\right),\\
   \gamma_1|\langle S_1f,[S_1,u\cdot\nabla_x]f\rangle| &\le\frac{\gamma_1}{\nu\sqrt{\alpha_1\alpha_2}} \left(\nu\alpha_1\|S_1f\|^2 + \nu\alpha_2\|S_2f\|^2\right)\\
   &\quad + \frac{\gamma_1}{\sqrt{\nu\alpha_1\beta_0}} \left(\nu\alpha_1\|S_1f\|^2 + \beta_0\|Q_2f\|^2\right),\\
 \kappa\gamma_1  | \la \grad_xS_1\grad_x f,S_1f\ra| &\lesssim \frac{\gamma_1}{\sqrt{\nu\alpha_1}}\frac{1}{\sqrt{\delta}}\left(\kappa^2\delta\|\laplace_x f\|^2 + \nu\alpha_1\|S_1f \|^2\right),\\
 \kappa\gamma_1   | \la S_1\grad_x f,\grad_xS_1f\ra|&\lesssim \sqrt{\gamma_1}\left(\kappa\gamma_1\|S_1\grad_x f\|^2 +\kappa \|\grad_x f\|^2\right).
\end{align*}
Using the estimates from Lemmas \ref{L9}, \ref{lemma:S2-bound} and \ref{L2-S}, we eventually arrive at
\begin{equation}
    \label{13}
    \begin{aligned}\mel
    \ddt \frac{\gamma_1}2\|S_1 f\|^2 + \nu\gamma_1 \|S_1\grad_y f\|^2   + \kappa\gamma_1 \|S_1\grad_x f\|^2 \\
    & \lesssim \left(\frac{\gamma_1}{\alpha_1}+\frac{\gamma_1}{\sqrt{\nu\alpha_1\beta_0}} +\frac{\gamma_1}{\nu\sqrt{\alpha_1\alpha_2}}  +\frac{\gamma_1}{\sqrt{\nu\alpha_1}}\frac{1}{\sqrt{\delta}} + \sqrt{\gamma_1}  \right) \Psi(f).
    \end{aligned}
\end{equation}

\medskip
\noindent
\emph{The $\beta_1$-term}. As before, the non-symmetric terms require a bit of a more involved analysis, so let us present the result with more detail. We can split the time derivative in the following way
\[
\begin{split}
   \frac{\dd}{\dd t}\langle C_1f,C_2f\rangle & = -\la C_1(u\cdot\nabla_xf),C_2f\ra - \la C_1f,C_2(u\cdot\nabla_xf)\ra \\
   & \quad + \nu \la C_1\Delta_yf, C_2f\ra + \nu \la C_1f, C_2\Delta_yf\ra \\
   & \quad + \kappa \la  C_1\laplace_x f,C_2f\ra + \kappa\la  C_1f,C_2\laplace_x f \ra.
\end{split}
\]
For the transport terms, we repeat essentially our argument from the $\beta_0$-term. We can write with the help of the Commutator Lemmas \ref{L2} and \ref{L3},
\begin{align*}
    \mel     -\la C_1(u\cdot\nabla_xf),C_2f\ra - \la C_1f,C_2(u\cdot\nabla_xf)\ra \\
    & = - \la [C_1,u\cdot\nabla_x]f,C_2f \ra - \la u\cdot\nabla_x (C_1f),C_2f\ra   -\la C_1f,[C_2,u\cdot\nabla_x]f \ra - \la C_1f,u\cdot\nabla_x(C_2f) \ra \\
    & = -\|C_2f\|^2 - \la R_2f,C_2f\ra - \la C_1f,C_3f\ra - \la C_1f,R_3f\ra .
\end{align*}
For the $y$-dissipation terms,  we apply multiple integrations by parts and the   identities $\Delta_y C_1 = -C_1$ and $\Delta_y C_2 = -C_2$ to obtain
\begin{align*}
 \mel  \nu \la C_1\Delta_yf, C_2f\ra + \nu \la C_1f, C_2\Delta_yf\ra\\
 & = -\nu\la \nabla_yC_1\nabla_yf,C_2f \ra - \nu\la C_1\nabla_yf,\nabla_yC_2f \ra   -2\nu\la C_1\nabla_yf,C_2\nabla_yf \ra\\
 &\quad - \nu \la C_1f,\nabla_yC_2\nabla_yf \ra  - \nu\la \nabla_yC_1f,C_2\nabla_yf \ra \\
    & =  2\nu\la \nabla_yC_1f,\nabla_yC_2f \ra +  \nu\la C_1f,\Delta_yC_2f \ra    -2 \nu\la C_1\nabla_yf,C_2\nabla_yf \ra + \nu\la\Delta_yC_1f,C_2f\ra \\
    & =  2\nu\la \nabla_yC_1f,\nabla_yC_2f \ra  -2  \nu\la C_1f,C_2f \ra     -2 \nu\la C_1\nabla_yf,C_2\nabla_yf \ra.
\end{align*}
 Finally, for the dissipation terms coming from the $x$-Laplacian, we invoke the identical argumentation to get
\begin{align*}
 \mel       \kappa \la  C_1\laplace_x f,C_2f\ra + \kappa\la  C_1f,C_2\laplace_x f \ra  \\
 & = 2\kappa \la \grad_x C_1 f,\grad_x C_2f\ra -\kappa \la C_1f,C_2f\ra -2\kappa \la C_1\grad_x f,C_2\grad_xf\ra.
\end{align*}
Therefore,  putting everything together, we find
\[
\begin{split}
\frac{\dd}{\dd t}\langle C_1f,C_2f\rangle + \|C_2f\|^2 & =   2\nu\langle C_1\nabla_yf,C_2\nabla_yf \rangle - 2\nu\la C_1f,C_2f \ra - 2\nu\langle \nabla_yC_1f,\nabla_yC_2f \rangle \\
& \quad  -2\kappa \la \grad_x C_1 f,\grad_x C_2f\ra -\kappa \la C_1f,C_2f\ra -2\kappa \la C_1\grad_x f,C_2\grad_xf\ra \\
       &\quad  -\langle R_2f,C_2f\rangle  -  \la C_1f,C_3f\ra - \la C_1f,R_3f\ra.
\end{split}
\]

To estimate the terms in the right-hand side we proceed with a combination of Hölder's and Young's inequality as before. First, for the $\nu$-terms, we obtain
\begin{gather*}
\nu\beta_1|\langle C_1\nabla_yf,C_2\nabla_yf \rangle| \leq \frac{\beta_1}{\sqrt{\alpha_1\alpha_2}} \left(\nu\alpha_1\|C_1\nabla_yf\|^2 + \nu\alpha_2\|C_2\nabla_yf\|^2\right),\\
\nu\beta_1|\langle C_1f,C_2f \rangle| \leq \frac{\nu\sqrt{\beta_1}}{\sqrt{\beta_0}} \left(\beta_0\|C_1f\|^2 + \beta_1\|C_2f\|^2\right),\\
    \nu\beta_1|\langle \nabla_yC_1f,\nabla_yC_2f \rangle| \leq \frac{
\beta_1}{\sqrt{\alpha_1\alpha_2}}\left(\nu\alpha_1\|S_1f\|^2 + \nu\alpha_2\|S_2f\|^2 \right),
\end{gather*}
Our estimates for the advection terms are inspired by Lemmas \ref{L2} and \ref{L3},
\begin{gather*}
\beta_1|\langle R_2f,C_2f\rangle| \leq \frac{\sqrt{\beta_1}}{\sqrt{\nu\alpha_1}}\left(\beta_1\|C_2f\|^2 + \nu\alpha_1\|R_2f\|^2\right),\\
\beta_1|\langle C_1f,C_3f\rangle| \leq \frac{\beta_1}{\sqrt{\beta_0\beta_2}}\left(\beta_0\|C_1f\|^2 + \beta_2\|C_3f\|^2\right),\\
\beta_1|\langle C_1f,R_3f\rangle| \le\frac{\beta_1}{\sqrt{\nu\alpha_2\beta_0}}\left(\beta_0\|C_1f\|^2 + \nu\alpha_2\|R_3f\|^2\right).
\end{gather*}
Finally, for the $\kappa$-terms we have  
\begin{gather*}
   \kappa \beta_1 |\la \grad_x C_1 f,\grad_x C_2f\ra |\lesssim \kappa\beta_1  \|\grad_x f\|^2 ,\\
   \kappa \beta_1 |\la C_1f,C_2f\ra|\lesssim  \kappa\beta_1 \|\grad_x f\|^2 ,\\
   \kappa \beta_1 |\la C_1\grad_x f,C_2\grad_xf\ra|\lesssim \frac{\beta_1}{\sqrt{\alpha_1\alpha_2}}\left(\kappa \alpha_1\|C_1\grad_x f\|^2 + \kappa \alpha_2 \|C_2\grad_xf\|^2\right).
\end{gather*}
 All in all we can conclude by combining everything and applying Lemmas \ref{L9}, \ref{L2},  \ref{lemma:S2-bound} and \ref{L3},
\begin{equation}
    \label{14a}
    \begin{aligned}
    \mel  \ddt\beta_1 \la C_1f,C_2f\ra + \beta_1 \|C_2 f\|^2 \\
    & \lesssim \left(\frac{\beta_1}{\sqrt{\alpha_1\alpha_2}} +\frac{\nu \sqrt{\beta_1} }{\sqrt{\beta_0}}    +\frac{\sqrt{\beta_1}}{\sqrt{\nu\alpha_1}} + \frac{\beta_1}{\sqrt{\beta_0\beta_2}} + \frac{\beta_1}{\sqrt{\nu\alpha_2\beta_0}} + \beta_1  \right) \Psi(f).
    \end{aligned}
\end{equation}

\medskip

\noindent
\emph{The $\alpha_2$-term}. We proceed with an analogous computation as for the $\alpha_1$-terms. We only have to take into account that the second-order commutators are not commuting, cf.~Lemma \ref{L21}. We find the identity
\begin{align*}
 \mel    \ddt \frac12 \|C_2f\|^2 + \nu \|C_2\grad_y f\|^2 +\kappa \|C_2\grad_x f\|^2\\
 &= -2\nu \la \grad_yC_2 \grad_yf,C_2f\ra +\nu \la [\grad_y C_2,C_2] f,\grad_y f\ra -2\kappa \la C_2\grad_x f,\grad_x C_2f\ra\\
 &\quad +\kappa \la \grad_x f,[\grad_x C_2,C_2]f\ra -\la[C_2,u\cdot \grad_x ]f,C_2f\ra.
\end{align*}

To bound the  terms on the right-hand side, we notice that (modulo signs), the entries of $\grad_y C_2$ are given by those of $S_2$, and therefore,
\[
\nu \alpha_2 |\langle \grad_y C_2 \grad_y f, C_2 f\rangle| \leq \frac{\alpha_2\sqrt{\nu}}{\sqrt{\gamma_2\beta_1}}\left(\nu\gamma_2\|S_2\nabla_yf\|^2 + \beta_1\|C_2f\|^2\right).
\]
Moreover, using Lemma \ref{L21}, we have that
\[
\nu \alpha_2 |\la [\grad_y C_2,C_2] f,\grad_y f\ra |\lesssim \sqrt{\alpha_2} \left(\nu \alpha_2 \|M_2\grad_x f\|^2 +\nu \|\grad_yf\|^2\right).
\]
To control the commutator generated by the advection term, we apply the decomposition of Lemma \ref{L3} and obtain
 \begin{align*}
  \alpha_2 |\langle C_2 f, [C_2 ,u\cdot \grad_x ]f\rangle |& \le \alpha_2 \|C_2 f\| \|C_3f\| +\alpha_2 \|C_2 f\|\|R_3 f\|\\
    &\lesssim \frac{\alpha_2}{\sqrt{\beta_1\beta_2}}\left(\beta_1\|C_2f\|^2 + \beta_2\|C_3f\|^2\right) \\
    & \quad + \frac{\sqrt{\alpha_2}}{\sqrt{\nu\beta_1}}\left(\beta_1\|C_2f\|^2 + \nu\alpha_2\|R_3f\|^2\right).
\end{align*}
Finally, for the $\kappa$-terms, we have
\begin{gather*}
    \kappa \alpha_2 |\la C_2\grad_x f,\grad_x C_2f\ra|  \lesssim \kappa\alpha_2 \|C_2\grad_x f\|\|\grad_xf\| \lesssim \sqrt{\alpha_2}\left(\kappa \alpha_2 \|C_2\grad_x f\|^2 +  \kappa \|\grad_x f\|^2\right)\\
 \kappa\alpha_2  |\la \grad_x f,[\grad_x C_2,C_2]f\ra|\lesssim \alpha_2\kappa \|\grad_xf\|^2.
\end{gather*}

Combining everything and invoking Lemmas \ref{lemma:S2-bound} and \ref{L3}, we obtain,
\begin{equation}
    \label{14}
    \begin{aligned}
  \mel   \ddt\frac{\alpha_2}2\|C_2 f\|^2 +\nu\alpha_2 \|C_2\grad_y f\|^2   +\kappa\alpha_2 \|C_2\grad_x f\|^2  \\
    & \lesssim \left(\frac{\sqrt{\nu}\alpha_2}{\sqrt{\beta_1\gamma_2}}+ \sqrt{\alpha_2}+ \frac{\alpha_2}{\sqrt{\beta_1\beta_2}}+\frac{\sqrt{\alpha_2}}{\sqrt{\nu\beta_1}}  +\alpha_2 \right) \Psi(f).
    \end{aligned}
\end{equation}

\medskip

\noindent
\emph{The $\gamma_2$-term}. The treatment of this term is almost identical to that of the $\alpha_2$-term. We will thus be rather sloppy. First, performing identical computations to those above, we obtain
\begin{align*}
 \mel    \ddt \frac12 \|S_2f\|^2 + \nu \|S_2\grad_y f\|^2 +\kappa \|S_2\grad_x f\|^2\\
 &= -2\nu \la \grad_yS_2 \grad_yf,S_2f\ra +\nu \la [\grad_y S_2,S_2] f,\grad_y f\ra -2\kappa \la S_2\grad_x f,\grad_x S_2f\ra\\
 &\quad +\kappa \la \grad_x f,[\grad_x S_2,S_2]f\ra -\la[S_2,u\cdot \grad_x ]f,S_2f\ra.
\end{align*}

We notice that (modulo signs), the entries of $\grad_y S_2$ are given by those of $C_2$. Hence, 
the $y$-dissipation terms on the right-hand side   can be estimated by
\begin{gather*}
    \nu\gamma_2|\langle \nabla_yS_2\nabla_yf,S_2f\rangle| \leq \frac{\gamma_2}{\alpha_2} \left(\nu\alpha_2\|S_2f\|^2 + \nu\alpha_2\|C_2\nabla_yf\|^2\right),\\
\nu\gamma_2|\la [\grad_y S_2,S_2] f,\grad_y f\ra|\lesssim \frac{\gamma_2}{\sqrt{\alpha_2}}\left(\nu\alpha_2 \|M_2\grad_x f\|^2 + \nu\|\grad_yf\|^2\right),
\end{gather*}
where we have used Lemma \ref{L21} again. Similarly, motivated by the commutator estimate in Lemma \ref{L3-S}, we bound
\[
    \gamma_2|\langle S_2f,[S_2,u\cdot\nabla_x]f\rangle| 
 \leq \frac{\gamma_2}{ \sqrt{\nu\alpha_2\beta_2}} \left(\nu\alpha_2\|S_2f\|^2 + {\beta_2}\|[S_2,u\cdot\nabla_x]f\|^2\right) .
\]
It remains to consider the $\kappa$-terms, which we do as above:
\begin{gather*}
    \kappa \gamma_2 |\la S_2\grad_x f,\grad_x S_2f\ra| 
\lesssim \sqrt{\gamma_2}\left(\kappa \gamma_2 \|S_2\grad_x f\|^2 + \kappa \|\grad_x f\|^2\right)\\
  \kappa\gamma_2| \la \grad_x f,[\grad_x S_2,S_2]f\ra| \lesssim \gamma_2 \kappa \|\grad_x f\|^2.
\end{gather*}

As a last step, we combine these insights and invoke Lemmas \ref{lemma:S2-bound} and \ref{L3-S}, to the effect that
\begin{equation}
    \label{15}
    \ddt \frac{\gamma_2}2\|S_2 f\|^2 + \nu\gamma_2 \|S_2\grad_y f\|^2 + \kappa\gamma_2 \|S_2\grad_x f\|^2 \lesssim \left(\frac{\gamma_2}{\alpha_2}+\frac{\gamma_2}{\sqrt{\alpha_2}} + \frac{\gamma_2}{ \sqrt{\nu\alpha_2\beta_2}}  + \sqrt{\gamma_2} +\gamma_2\right) \Psi(f).
\end{equation}

\medskip

\noindent
\emph{The $\beta_2$-terms}. We will argue similarly as for the $\beta_1$-term and leave out  details whenever the procedure is essentially identical. We will again use the identities $\Delta_y C_2=-C_2$, $\laplace_x C_2=-C_2$ and $\laplace_x C_3 = -C_3$. However, apparently, this identity is not true for the $y$-Laplacian of the third-order commutator. Instead, we have $\laplace_y C_3 = -3C_3 $. 

Following now an equivalent procedure to the computations performed for the $\beta_1$-term, this time using Lemma \ref{L3}, we find the identity  
\begin{align*}
    \ddt \la C_2 f,C_3 f\ra +\|C_3f\|^2 & = 2\nu \la \grad_y C_2f,\grad_y C_3f\ra -2\nu \la C_2\grad_y f,C_3\grad_yf\ra -4\nu \la C_2f,C_3 f\ra\\
    &\quad +2\kappa \la \grad_xC_2 f,\grad_x C_3f\ra -2\kappa \la C_2\grad_x f, C_3\grad_xf\ra - 2\kappa \la C_2f,C_3f\ra\\
    &\quad -\la C_3 f,R_3f\ra  - \la C_2 f,[C_3,u\cdot \grad_x ]f\ra.
\end{align*}

For the estimates, we freely use the fact that $C_3$ contains only derivatives with respect to $x_1$ and $x_2$, and thus, any term involving $C_3$ is controlled by $\grad_x'$. The dissipation terms coming from the Brownian motion are then estimated as follows:
\begin{gather*}
    \nu \beta_2 |\la \grad_y C_2f,\grad_y C_3f\ra |\lesssim \nu \beta_2 \|S_2 f\|\|\grad_x' f\|\lesssim \frac{\sqrt{\nu\beta_2}}{\sqrt{\alpha_2}} \left(\nu\alpha_2 \|S_2f\|^2 +\beta_2 \|\grad_x' f\|^2\right),\\
     \nu \beta_2 |\la C_2\grad_y f,C_3\grad_yf\ra| \lesssim  \frac{\sqrt{\nu\beta_2}}{\sqrt{\alpha_2}} \left(\nu\alpha_2 \|C_2\grad_y f\|^2 +\beta_2 \|\grad_x' f\|^2\right),\\
    \nu\beta_2|\la C_2f,C_3 f\ra|  \le \frac{\sqrt{\beta_2}}{\sqrt{\beta_1}}\left(\beta_1\|C_2f\|^2 +\beta_2\|C_3f\|^2\right).
\end{gather*}
To bound the terms that are generated by the advection term, we have that
\begin{gather*}
    \beta_2 |\la C_3 f,R_3f\ra | \le \frac{\sqrt{\beta_2}}{\sqrt{\nu\alpha_2}} \left(\beta_2 \|C_3f\|^2 +\nu\alpha_2 \|R_3f\|^2\right),\\
    \beta_2 | \la C_2 f,[C_3,u\cdot \grad_x ]f\ra| \le\frac{\sqrt{\beta_2}}{\sqrt{\beta_1}}\left(\beta_1\|C_2f\|^2 +\beta_2\|[C_3,u\cdot \grad_x ]f\|^2\right).
\end{gather*}
Finally, with regard to the $\kappa$-term, we notice the rather brutal bounds
\begin{gather*}
    \kappa \beta_2| \la \grad_xC_2 f,\grad_x C_3f\ra| \lesssim \beta_2\kappa \|\grad_x f\|^2,\\
    \kappa \beta_2|  \la C_2\grad_x f, C_3\grad_xf\ra| \lesssim \kappa \beta_2 \|C_2\grad_x f\| \|\grad_x'\grad_xf\| \le \frac{\beta_2}{\sqrt{\alpha_2\alpha_3}}\left(\kappa \alpha_2\|C_2\grad_xf\|^2 + \kappa\alpha_3\|\grad_x'\grad_xf\|^2\right),\\
    \kappa \beta_2 | \la C_2f,C_3f\ra| \lesssim \beta_2\kappa \|\grad_x f\|^2.
\end{gather*}

Altogether, by the virtue of Lemmas \ref{lemma:S2-bound}, \ref{L5}, and \ref{L3}, \ref{lemma:C4} we have derived the estimate 
\begin{equation}
    \label{16}
    \ddt \la C_2f,C_3f\ra +\|C_3f\|^2 \lesssim \left(\frac{\sqrt{\nu\beta_2}}{\sqrt{\alpha_2}} +  \frac{\sqrt{\beta_2}}{\sqrt{\beta_1}} +\frac{\sqrt{\beta_2}}{\sqrt{\nu\alpha_2}} +\beta_2 +\frac{\beta_2}{\sqrt{\alpha_2\alpha_3}}\right)\Psi(f).
\end{equation}

\medskip
\noindent
\emph{The $\alpha_3$-terms}. To analyse the last term in our energy functional, we study the first entry of $\grad_x'f$. It is readily verified that
\begin{align*}  
\mel \ddt\frac12 \|\partial_1 f\|+\nu \|\grad_y\partial_1 f\|^2 +\kappa\|\grad_x \partial_1 f\|^2 \\
& = -\la \partial_1(u\cdot \grad_x f),\partial_1 f\ra  = -\la \partial_1 u\cdot \grad_xf,\partial_1f\ra = -\la (C_1f)_2,\partial_1f\ra.
\end{align*}
For the only error term on the right-hand side, we notice that
\[
\alpha_3 | \la (C_1f)_2,\partial_1f\ra| \le \alpha_3 \|C_1f\|\|\grad_x' f\| \le \frac{\alpha_3}{\sqrt{\beta_0\beta_2}} \left(\beta_0\|C_1f\|^2 + \beta_2\|\grad_x' f\|^2\right).
\]
The $\partial_2$-derivative can be bounded identically. We thus conclude that
\begin{equation}
    \label{17}
     \ddt\frac{\alpha_3}2\| \nabla_x' f\|^2 +\nu\alpha_3\| \nabla_x'\nabla_yf\|^2  + \kappa\alpha_3\| \nabla_x'\nabla_xf\|^2  
    \lesssim \frac{\alpha_3}{\sqrt{\beta_0\beta_2}} \Psi(f).
 \end{equation}
 
\medskip
\noindent
\emph{Summary}. 
To sum up, we can combine all the estimates from each of the addends in the energy functional \eqref{9}, \eqref{10}, \eqref{11}, \eqref{12}, \eqref{13}, \eqref{14a}, \eqref{14}, \eqref{15}, \eqref{16} and \eqref{17}, to obtain a general energy estimate of the form
\[
\ddt \Phi(f) + \Psi(f) \lesssim \Lambda  \Psi(f),
\]
where $\Lambda= \Lambda(\alpha_i,\beta_i,\gamma_i,\nu,\delta)$ is the sum of all coefficients of $\Psi(f)$. Choosing these as in Lemma \ref{L11} and $\delta=\eps$, we achieve that $\Lambda\lesssim  \eps$. Hence, if $\eps$ is small enough, we deduce 
\[
\ddt \Phi(f) + \frac12\Psi(f) \le0,
\]
and the proof is complete.
\end{proof}

\subsection{Proof of Theorem \ref{T1}}\label{SS1}

In this subsection, we will complete the proof of Theorem \ref{T1}.  We start by establishing a Hardy--Poincar\'e inequality that will be exploited in the proof of  Lemma \ref{L12}.

\begin{lemma}
    \label{L13}
    Suppose that $g:\T^4\to \R$ is a regular function with zero mean,
    \[
    \int_{\T^4}g\, \dd x = 0,
    \]
    with the property that the functions
    \[
    G_i(x) = \frac12\left(g(x)+ g(x+ \pi e_{i-2})\right)
    \]
    are $\pi$-periodic in $x_i$ both for $i=3$ and $i=4$. Then it holds that
    \[
    \|g\|  \lesssim \|\partial_1 g\| + \|\partial_2 g\|+ \|\si_3\partial_3 g\| + \|\si_4\partial_4 g\|.
    \]
\end{lemma}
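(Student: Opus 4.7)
I split $g = g_0 + g_\perp$ along the $(x_1,x_2)$-average, setting $g_0(x_3,x_4) := (2\pi)^{-2}\iint_{\T^2} g\,\d x_1\,\d x_2$. The residual $g_\perp$ has zero mean in $(x_1,x_2)$ for each $(x_3,x_4)$, so the standard Poincar\'e inequality on $\T^2_{x_1,x_2}$, integrated over $(x_3,x_4)$, yields
\begin{equation*}
    \|g_\perp\|^2 \lesssim \|\partial_1 g\|^2 + \|\partial_2 g\|^2,
\end{equation*}
which absorbs two of the error terms on the right-hand side. For $g_0$, averaging the defining $G_3$-symmetry over $x_1$ forces $g_0$ to be $\pi$-periodic in $x_3$; symmetrically $g_0$ is $\pi$-periodic in $x_4$, and the zero-mean of $g$ transfers to $g_0$. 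Jensen's inequality gives $\|\si_3\,\partial_3 g_0\| \leq \|\si_3\,\partial_3 g\|$ and likewise in $x_4$, so matters reduce to the weighted Poincar\'e inequality
\begin{equation*}
    \|h\|_{L^2(\T_\pi^2)}^2 \lesssim \|\sin x_3\,\partial_3 h\|^2 + \|\sin x_4\,\partial_4 h\|^2
\end{equation*}
for mean-zero $h:\T_\pi^2\to\R$, where $\T_\pi$ is the circle of period $\pi$ on which the weight $\sin$ has a single linear zero.

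To prove this 2D bound I iterate a 1D estimate. Writing $h = \bar h(x_3) + h_\perp$ with $\bar h(x_3) := \pi^{-1}\int_0^\pi h\,\d x_4$, the slice-mean-zero part $h_\perp$ is treated by a 1D weighted Poincar\'e in $x_4$ applied for each fixed $x_3$, while $\bar h$ has zero mean on $\T_\pi$ and is treated by a 1D weighted Poincar\'e in $x_3$, using Cauchy--Schwarz to pass from $\|\sin x_3\,\bar h'\|_{L^2(0,\pi)}^2$ to $\pi^{-1}\|\sin x_3\,\partial_3 h\|_{L^2(\T_\pi^2)}^2$. Everything then reduces to the 1D inequality
\begin{equation*}
    \int_0^\pi \psi^2\,\d x \lesssim \int_0^\pi \sin^2 x\,(\psi')^2\,\d x
\end{equation*}
for $\psi\in H^1(\T_\pi)$ with $\int_0^\pi \psi\,\d x = 0$.

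The 1D estimate I would derive from Hardy's classical inequality. Set $\Phi(x) := \int_0^x \psi\,\d t$, which vanishes at both endpoints $x = 0,\pi$ by the zero-mean condition. Integration by parts gives $\int_0^\pi \psi^2 = -\int_0^\pi \Phi\psi'$, and Cauchy--Schwarz yields
\begin{equation*}
    \int_0^\pi \psi^2\,\d x \leq \|\Phi/\sin x\|_{L^2(0,\pi)}\,\|\sin x\,\psi'\|_{L^2(0,\pi)}.
\end{equation*}
It thus remains to show $\|\Phi/\sin x\|_{L^2(0,\pi)} \lesssim \|\psi\|_{L^2(0,\pi)}$: on $[0,\pi/2]$ the estimate $\sin x \geq 2x/\pi$ combined with Hardy's inequality $\int_0^a \Phi^2/x^2\,\d x \leq 4\int_0^a \psi^2\,\d x$ (valid since $\Phi(0) = 0$) closes one endpoint; on $[\pi/2,\pi]$ the analogue follows by the reflection $x\mapsto \pi - x$, using $\Phi(\pi) = 0$. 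Absorbing one factor of $\|\psi\|$ on both sides then completes the proof.

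The main obstacle is precisely the degeneracy of the weights $\si_3,\si_4$, each of which vanishes at two distinct points of $\T$: a naive weighted Poincar\'e on $\T^4$ is false, since a bump localized near either zero of the weight has vanishing weighted derivative and needn't have vanishing $L^2$ norm. The symmetry hypothesis on $G_3,G_4$ is exactly what collapses the two zeros per variable into a single zero on the quotient $\T_\pi$, where one-sided Hardy inequalities become an effective tool.
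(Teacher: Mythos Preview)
Your proof is correct and follows essentially the same reduction as the paper: peel off the $(x_1,x_2)$-dependence by the standard Poincar\'e inequality, use the $G_i$-symmetry so that the remaining $(x_3,x_4)$-average is $\pi$-periodic, and then iterate a one-dimensional weighted Poincar\'e inequality on $(0,\pi)$. The only substantive difference is in the proof of the 1D lemma: the paper subtracts the midpoint value $h(\pi/2)$ and integrates by parts against $\frac{\dd}{\dd x}\sin(x/2)$, whereas you pass to the antiderivative $\Phi$ (which vanishes at both endpoints by the mean-zero condition) and invoke the classical Hardy inequality near each zero of $\sin x$; both are standard routes to the same estimate.
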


\begin{proof}The statement is a consequence of the standard 1D Poincar\'e inequality
\begin{equation}
    \label{19}\int_{\T} \left(h-\avint_{\T} h\dd x\right)^2\dd x \lesssim \int_{\T} \left(\frac{\dd}{\dd x} h\right)^2\dd x,
\end{equation}
  and the 1D Hardy--Poincar\'e inequality
    \begin{equation}
    \label{20}\int_{\T} \left(H-\avint_{\T} H\dd x\right)^2\dd x \lesssim \int_{\T} \sin^2(x)\left(\frac{\dd}{\dd x} H\right)^2\dd x,
\end{equation}
for any $\pi$-periodic function $H$.  For the convenience of the reader, we provide the short argument of the latter. Actually, thanks to the $\pi$-periodicity, we may replace the torus by the interval $(0,\pi)$. Moreover, because
\[
\inf_{c\in\R} \int_0^{\pi}(h-c)^2\, \dd x = \int_{\T} \left(h-\frac1{\pi}\int_0^\pi h\dd x\right)^2\dd x,
\]
and by the symmetry of the problem it is enough to prove the slightly stronger statement
\[
\int_0^{\pi/2} \left(h-h(\pi/2)\right)^2 \dd x \lesssim \int_0^{\pi/2} \sin^2(x)\left(\frac{\dd}{\dd x} h\right)^2\dd x.
\]
This estimate can be established as follows: Noticing that $1/\sqrt{2}\le  \cos(x/2)= 2\frac{\dd}{\dd x}\sin(x/2)$ for $x\in(0,\pi/2)$, we observe that
\[
\begin{split}
\int_0^{\pi/2} \left(h-h\left(\frac{\pi}{2}\right)\right)^2\dd x & \lesssim \int_0^{\pi/2}\frac{\dd}{\dd x}\sin\left(\frac{x}{2}\right) \left(h-h\left(\frac{\pi}{2}\right)\right)^2\dd x  \\
& = -2 \int_0^{\pi/2} \sin\left(\frac{x}{2}\right)\left(h-h\left(\frac{\pi}{2}\right)\right)\frac{\dd}{\dd x} h\dd x,
\end{split}
\]
because boundary terms disappear in the integration by parts. Therefore, using the Cauchy--Schwarz inequality and observing also that $\sin(x/2)\lesssim \sin(x)$ for any $x\in(0,\pi/2)$, the desired estimate follows.

It remains to indicate how the two inequalities \eqref{19} and \eqref{20} imply the statement of the lemma.  We use a telescope sum argument to write
\begin{align*}
\mel g(x_1,x_2,x_3,x_4) - \avint_{\T^4} g(\tilde x_1,\tilde x_2,\tilde x_3,\tilde x_4)\,\dd (\tilde x_1,\tilde x_2,\tilde x_3,\tilde x_4)\\
& = g(x_1,x_2,x_3,x_4) - \avint_{\T} g(\tilde x_1,x_2,x_3,x_4)\, \dd \tilde x_1\\
&\quad + \avint_{\T}\left( g(\tilde x_1,x_2,x_3,x_4) - \avint_{\T} g(\tilde x_1,\tilde x_2,x_3,x_4)\, \dd \tilde x_2\right)\dd \tilde x_1\\
&\quad +\avint_{\T^2} \left(g(\tilde x_1,\tilde x_2,x_3,x_4)-\avint_{\T}g(\tilde x_1,\tilde x_2,\tilde x_3,x_4)\, \dd \tilde x_3\right)\dd (\tilde x_1,\tilde x_2)\\
&\quad+ \avint_{\T^3} \left(g(\tilde x_1,\tilde x_2,\tilde x_3,x_4)-\avint_{\T}g(\tilde x_1,\tilde x_2,\tilde x_3,\tilde x_4)\, \dd \tilde x_4\right)\dd (\tilde x_1,\tilde x_2,\tilde x_3).
\end{align*}
By a shift in the variables, we may replace $g$ in the last two lines by the $\pi$-periodic functions $G_3$ and $G_4$, respectively. To be more specific, we have, for instance, for any fixed $\tilde x_2, x_3$ and $x_4$,
\begin{align*}
   \mel  \avint_{\T} \left(g(\tilde x_1,\tilde x_2,x_3,x_4)-\avint g(\tilde x_1,\tilde x_2,\tilde x_3,x_4)\dd \tilde x_3 \right)\dd \tilde x_1\\
  &  =   \avint_{\T} \left( g(\tilde x_1+\pi,\tilde x_2,x_3,x_4)-\avint   g(\tilde x_1+\pi,\tilde x_2,\tilde x_3,x_4) \dd \tilde x_3  \right) \dd \tilde x_1 \\
  & = \avint_{\T} \left(G_3(\tilde x_1,\tilde x_2,x_3,x_4)-\avint G_3(\tilde x_1,\tilde x_2,\tilde x_3,x_4)\dd \tilde x_3 \right)\dd \tilde x_1.
\end{align*}
After integration, the first two terms can be estimated using the Poincar'e inequality \eqref{19} with respect to $x_1$ and $x_2$. The last two terms are bounded with the help of the Hardy--Poincar\'e inequality \eqref{20} applied in $x_3 $ and $x_4$. This proves the lemma.
\end{proof}

We now turn to the proof of Lemma \ref{L12}, which then allows us to estimate the dissipation term from below by the Lyapunov functional. 

\begin{proof}
    [Proof of Lemma \ref{L12}]
    We start by noticing that the indefinite $\beta_i$ terms can be dropped in the definition of $\Phi(f)$. Indeed, recalling that $C_0=\grad_y$, we have for any index $i\in\{0,1,2\}$
    \[
    \beta_i |\la C_i f,C_{i+1}f\ra| \le  \frac{\beta_i}{\sqrt{\alpha_i\alpha_{i+1}}} \left( \frac{\alpha_i}2 \|C_if\|^2 + \frac{\alpha_{i+1}}2\|C_{i+1}f\|^2\right).
    \]
    In the case $i=2$, we may trivially estimate the third commutator $C_3f$ by $\grad_x'f$. For any $\eps\le1$ in the hypothesis of Lemma \ref{L11}, the prefactors are bounded uniformly by $\sqrt{\eps}$, and hence, these terms are controlled by the remaining terms of $\Phi(f)$, if $\eps$ is chosen small enough.
    
    Therefore, using Lemmas \ref{L9}, \ref{lemma:S2-bound} and \ref{L5} to control $S_1f$, $S_2f$, and $M_3\nabla_x f$, we find that
    \begin{equation}
        \label{21}
     \Phi(f) \lesssim \|f\|^2 + \left(\frac{\alpha_0}{\nu} + \frac{\alpha_1}{\beta_0}+\frac{\gamma_1}{\nu\alpha_1}+\frac{\alpha_2}{\beta_1} +\frac{\gamma_2}{\nu\alpha_2}+ \frac{\alpha_3}{\beta_2} +  \frac1{\nu}+\delta\right) \Psi(f).
      \end{equation}
We choose $\delta=\eps<1$. The remaining terms multiplying the dissipation term $\Psi(f)$ are all of the order $1/\nu$ if the coefficients scale as in \eqref{23}.

In order to control the $L^2$ norm of $f$ by the dissipation term, we have to apply the Hardy--Poincar\'e inequality from Lemma \ref{L13}. It is applicable, because $f$ has zero mean with respect to the $x$ variable. Indeed, because $f$ has zero mean initially and because the mean satisfies the heat equation
    \[
    \frac{\partial}{\partial t}\avint_{\T^4}f\, \dd x  = \nu\laplace_y \avint_{\T^4} f\, \dd x,
    \]
    it remains zero during the evolution.
    Hence, thanks to Lemma \ref{L13} we find that
    \[
    \|f\|\lesssim \|\grad_x'f\| + \|M_2\nabla_xf\|.
    \]
Invoking now the bounds in Lemmas \ref{lemma:S2-bound} and \ref{L5}, we obtain that
    \[
    \|f\|\lesssim \left(\frac1{\nu\alpha_2}+\frac1{\beta_2}\right)\Psi(f).
    \]
This time, the prefactor   of the dissipation is of the order $\nu^5$ by the scaling in \eqref{23}, and it dominates thus the ones that we discussed above. Inserting this estimate into \eqref{21} gives thus the desired statement.
\end{proof}

Next, we prove that the equivalence between the energy functional $\Phi$ and the $\Ha^1$ norm.

\begin{proof}
    [Proof of Lemma \ref{L14}]
The proof is in parts similar to the previous one. Arguing as before, we notice that we can drop the indefinite $\beta_i$ terms in $\Phi(f)$ provided that  $\eps$ in the hypothesis of Lemma \ref{L11} is sufficiently small. First, for the bound $\Phi(f)\lesssim \|f\|_{\mathcal{H}^1}^2$ we simply use the trivial pointwise estimates
\[
|C_1f|+|S_1f|+|C_2f|+|S_2f|\lesssim  |\grad_x'f| + |\si_4\partial_3 f| + |\si_3\partial_4 f| + |\si_3\partial_3f| + |\si_3\partial_4 f| .
\]
Towards the reciprocal bound $\|f\|_{\mathcal{H}^1}^2\lesssim \Phi(f)$, we make use of the pointwise lower bounds from Lemmas \ref{lemma:bound-D1} and \ref{L1},
\[
|C_1f|+|S_1f| + |C_2f|+|S_2f| \gtrsim |\si_4\partial_3 f|+|\si_3\partial_4f| +|\si_3\partial_3 f| +|\si_4 \partial_4 f|.
\]
This establishes the equivalence.
\end{proof}

We finally ensure the compatibility of the conditions on the coefficients.

\begin{proof}[Proof of Lemma \ref{L11}]
We focus on  coefficients of the form
\[
\alpha_i = a_i\nu^{-2i}, \quad \beta_i = b_i\nu^{-(2i+1)}, \quad \gamma_i = c_i\nu^{-2i}, \quad ,
\]
with $a_i,b_i,c_i>0$ of order $1$ in $\nu$. Taking into account all the conditions that are needed to be satisfied in terms of $\eps>0$, we can assume that the coefficients in Latin letters are defined as a suitable power of $\eps$, namely,
\[
a_i = \eps^{x_i}, \quad b_i = \eps^{y_i}, \quad c_i = \eps^{z_i}.
\]
With this nomenclature we can translate the conditions for the Greek letters from Lemma \ref{L11} to conditions for $x_i$, $y_i$ and $z_i$. For instance, the condition $\alpha_0^2\ll_\eps \nu\beta_0$ can be written as
\[
\alpha_0^2\leq \eps\nu\beta_0 \quad \Leftrightarrow \quad \eps^{2x_0} \leq \eps^{1+y_0} \quad \Leftrightarrow \quad 2x_0\geq 1+y_0.
\]
An analogous procedure for all the conditions stated in the lemma yields, on the one hand, that the sequence \eqref{22} is equivalent to imposing
\begin{gather*}
    x_0\geq 1, \\
    y_0\geq 1+x_0, \quad x_1\geq 1+y_0, \quad z_1\geq 1+x_1, \quad y_1\geq 1+z_1, \\
    x_2\geq 1+y_1, \quad z_2\geq 1+x_2, \quad x_3\geq 1+z_2, \quad y_2\geq 1+x_3.
\end{gather*}
On the other hand, the rest of the conditions stated in Lemma \ref{L11} are equivalent to the following set of requirements.
\begin{gather*}
    2x_0\geq 1+y_0, \quad 2y_0\geq 1+y_1, \quad 2x_1\geq 1+y_0+y_1, \quad 2x_2\geq 1+y_1+y_2, \\
    2y_0\geq 1+x_0+x_1, \quad 2y_1\geq 1+x_1+x_2, \quad 2y_2\geq 1+x_2+x_3, \quad 2y_1\geq 1+y_0+y_2, \\
    2x_1\geq 1+y_0+z_1,  \quad 2z_1\geq 1+x_1+x_2, \quad 2z_2\geq 1+x_2+y_2.
\end{gather*}
This is a solvable system with more than one solution. In particular, the following choice of parameters gives a compatible solution:
\begin{gather*}
    x_0 = 256, \quad x_1=448, \quad x_2=488,\quad x_3=492, \\
    y_0 = 384, \quad y_1 = 480, \quad y_2=493, \\
    z_1=472, \quad z_2=491.
\end{gather*}
This completes the proof of the lemma.
\end{proof}

\section{Mixing and enhanced dissipation}\label{s:mixing}

In this section we elaborate on the strategy presented in Section \ref{s:strategy} and explain  in more detail how to obtain our statement about the exponential decay of correlations from the hypocoercivity result in Theorem \ref{T1}. This will eventually lead to the proofs of Propositions \ref{prop:mix} and \ref{prop:diss}. The core ideas presented in this section are mostly already available in the existent literature, see e.g.\ \cites{BBPS22,BlumenthalCotiZelatiGvalani23}. Nonetheless, we include them in detail here in our specific setting for completion.

To begin with, we recall from Section \ref{s:strategy} that we denote by  $X_t$ and $Z_t$ the stochastic flows introduced in \eqref{flow}, and by  $Y_t$ the one from \eqref{heat}. Moreover, we recall  that on the one hand $\EX_\nu$ denotes the expectation operator with respect to the noise induced by $\tilde B_t$, and on the other hand $\EX_\kappa$ denotes the expectation with respect to the noise induce by $B_t$ or $\hat B_t$. We follow an analogous nomenclature for the probability measures $\Prob_\nu$ and $\Prob_\kappa$ respectively.

Without further ado, we introduce the first result required for the proofs of Propositions \ref{prop:mix} and \ref{prop:diss}. The following lemma is a first step in the formalization of the Borel--Cantelli argument that was sketched in Section \ref{s:strategy}.

\begin{lemma}\label{lemma41}
Assume that $\theta_0$ has zero mean and the initial distribution $\rho_0$ satisfies  \eqref{rho0}. Assume that the solution $f$ of equation \eqref{hypo} satisfies the estimate
\[
\|f(t)\|_{\Ha^1(\T^2\times\T^2\times\T^2)} \lesssim \e^{-\lambda t}\|f_0\|_{\Ha^1(\T^2\times\T^2\times\T^2)},
\]
for any $t\ge 0$, with $\lambda>0$ independent of $\kappa$ and with initial configuration of the form $f_0=\theta_0\otimes\theta_0\otimes\rho_0$.  Then, for any $h\in L^2(\T^2)$ and any bounded solution $\theta(t,\cdot)$  to \eqref{eq:AD} starting from $\theta_0$, we have the estimate
\[
\EX_\nu\la h,\theta(t)\ra^2 \lesssim \|h\|^2_{L^2}\|\rho_0\|_{W^{1,\infty}}\|\theta_0\|_{L^{\infty} }\|\theta_0\|_{H^1}\e^{-\lambda t},
\]
for any $t\geq 0$. 
\end{lemma}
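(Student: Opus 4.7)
The plan is to translate the squared correlation $\EX_\nu\la h,\theta(t)\ra^2$ into a pairing against the two-point function $f$ from Section \ref{s:strategy}, then use Cauchy--Schwarz together with the hypothesized exponential decay and the product structure of the initial datum.

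First, represent $\theta(t,x)=\EX_\kappa\theta_0(X_t^{-1}(x))$ via Feynman--Kac, and represent $\theta(t,z)$ using an independent copy $Z_t$ driven by a second Brownian motion $\hat B_t$ that is independent of both $B_t$ and $Y_t$. Squaring the pairing, taking $\EX_\nu$, and applying Fubini yields
\[
\EX_\nu\la h,\theta(t)\ra^2 = \iint_{\T^2\times\T^2} h(x)h(z)\,\EX_\nu\!\left[\EX_\kappa\theta_0(X_t^{-1}(x))\,\EX_\kappa\theta_0(Z_t^{-1}(z))\right]\dd x\dd z.
\]

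The key step is to insert $\rho_0$. For each realization of $\tilde B_t$, the map $y\mapsto Y_t^{-1}(y)$ is a translation on $\T^2$ and hence Lebesgue-measure-preserving, so $\int_{\T^2}\rho_0(Y_t^{-1}(y))\dd y = \int_{\T^2}\rho_0\dd y = 1$ almost surely. Multiplying the integrand by this unity and commuting $\dd y$ with the expectations produces
\[
\EX_\nu\la h,\theta(t)\ra^2 = \iiint_{\T^6} h(x)h(z)\,f(t,x,z,y)\dd x\dd z\dd y,
\]
where $f$ is the solution of \eqref{hypo} with initial datum $f_0=\theta_0\otimes\theta_0\otimes\rho_0$; the rigorous Feynman--Kac justification is deferred to Appendix \ref{s:feyman-kac}. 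Cauchy--Schwarz in $L^2(\T^6)$, the trivial bound $\|\cdot\|_{L^2}\le\|\cdot\|_{\Ha^1}$, and the hypothesis then give
\[
\EX_\nu\la h,\theta(t)\ra^2 \lesssim \|h\|_{L^2}^2\|f(t)\|_{L^2(\T^6)} \le \|h\|_{L^2}^2\|f(t)\|_{\Ha^1} \lesssim \|h\|_{L^2}^2\,\e^{-\lambda t}\|f_0\|_{\Ha^1}.
\]

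It remains to bound $\|f_0\|_{\Ha^1}$ with $f_0=\theta_0\otimes\theta_0\otimes\rho_0$. Leibniz's rule distributes each $x$-derivative across the two copies of $\theta_0$, the weight $d_{\mathbb{D}}$ is bounded, and $\sqrt{\kappa}\le 1$ renders the full-$\nabla_x$ term harmless; each resulting summand factors into an $L^2$-in-$x$ quantity controlled by $\|\theta_0\|_{L^2}\|\theta_0\|_{H^1}$ times an $L^\infty$-in-$y$ quantity controlled by $\|\rho_0\|_{W^{1,\infty}}$. Hölder's inequality thus gives $\|f_0\|_{\Ha^1}\lesssim \|\theta_0\|_{L^2}\|\theta_0\|_{H^1}\|\rho_0\|_{W^{1,\infty}}$, and the trivial bound $\|\theta_0\|_{L^2}\lesssim \|\theta_0\|_{L^\infty}$ on the compact torus delivers the advertised estimate. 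The main conceptual ingredient is the $\rho_0$-insertion step: it realises the purely $(x,z)$-dependent correlation as a pairing with the three-variable function $f$ to which Theorem \ref{T1} applies; everything else is routine.
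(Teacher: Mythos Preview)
Your proof is correct and follows essentially the same route as the paper: expand the square via Feynman--Kac with an independent copy $Z_t$, insert $\int_{\T^2}\rho_0(Y_t^{-1}(y))\dd y=1$ to recognise the pairing with $f$, apply Cauchy--Schwarz, and finish with the product-structure bound on $\|f_0\|_{\Ha^1}$. The only cosmetic difference is that the paper bounds $\|f_0\|_{\Ha^1}$ directly by $\|\rho_0\|_{W^{1,\infty}}\|\theta_0\|_{L^\infty}\|\theta_0\|_{H^1}$, whereas you first obtain $\|\theta_0\|_{L^2}\|\theta_0\|_{H^1}\|\rho_0\|_{W^{1,\infty}}$ and then weaken it---your intermediate bound is in fact slightly sharper.
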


\begin{proof}
For the sake of a clearer presentation we go back to the notation used in Section \ref{s:strategy}, namely we write $(x,z)\in\T^2\times\T^2$ for  $x\in\T^4 $. Arguing as before, we compute the expectation of $\la h,\theta(t)\ra^2$ by factorising the square using double variables and by invoking the Feynman--Kac representation of solutions to the passive scalar equation \eqref{eq:AD}, 
\[
\begin{split}
\mel \EX_\nu\la h,\theta(t)\ra^2 \\
& = \EX_\nu \left(\int_{\T^2} h(x)\theta(t,x)\dd x\right)^2 = \EX_\nu \iint_{\T^2\times\T^2} h(x)h(z)\theta(t,x)\theta(t,z)\dd x\dd z\\
& = \EX_\nu \iint_{\T^2\times\T^2} h(x)h(z)\EX_\kappa \theta_0(X_t^{-1}(x;\{Y_s(y)\}_{s\in[0,t]}))\EX_\kappa \theta_0(Z_t^{-1}(z;\{Y_s(y)\}_{s\in[0,t]}))\dd x\dd z.
\end{split}
\]
Now we consider $\rho_0$ as in the claim of the lemma and fix one realisation of the stochastic process $Y_t$, which is volume preserving with $\Prob_\nu$-probability $1$. Therefore we have that
\[
\int_{\T^2}\rho_0(Y_t^{-1}(y))\dd y = 1\quad \Prob_\nu-\textrm{almost surely},
\]
and, thus,  we can plug it inside the expectation $\EX_\nu$ as a multiplicative factor. More precisely, we can write
\[
 \EX_\nu\la h,\theta(t)\ra^2 
 = \iiint_{\T^2\times \T^2\times\T^2} h(x)h(z)f(t,x,z,y)\dd x\dd z\dd y,
\]
where we have set
\[
f(t,x,z,y) = \EX_\nu \left[\EX_\kappa \theta_0(X_t^{-1}(x;\{Y_s(y)\}_{s\in[0,t]}))\EX_\kappa \theta_0(Z_t^{-1}(z;\{Y_s(y)\}_{s\in[0,t]}))\rho_0(Y_t^{-1}(y))\right].
\]
Using the Feynman--Kac formula, we find that the function $f$ satisfies precisely the (hypo-) elliptic PDE studied in Section \ref{s:hypocoercivity}. We refer to Appendix \ref{s:feyman-kac} for details. Following the notation of this proof, where we consider $(x,z,y) \in \T^2 \times \T^2 \times \T^2$ as the spatial variables, this PDE takes the form  
\[
\partial_tf + v(x,y)\cdot\nabla_xf + v(z,y)\cdot\nabla_zf = \kappa(\Delta_xf + \Delta_zf) + \nu\Delta_yf.
\]
 To sum up, we can use Hölder's inequality and the assumed decay on $f$ to estimate
\[
\EX_\nu\la h,\theta(t)\ra^2 \leq \|h\otimes h\otimes 1\|_{L^2}\|f(t)\|_{L^2} \leq \|h\otimes h\otimes 1\|_{L^2}\|f(t)\|_{\Ha^1}\lesssim \|h \|_{L^2}^2\|f_0\|_{\Ha^1}\e^{-\lambda t},
\]
for any $t\ge0$. Using the particular form of our initial datum $f_0(x,z,y) = \theta_0(x)\theta_0(z)\rho_0(y)$, we conclude that
\[
\|f_0\|_{\Ha^1(\T^2\times\T^2\times\T^2)} \lesssim \|\rho_0\|_{W^{1,\infty}(\T^2)}\|\theta_0\|_{L^\infty(\T^2)}\|\theta_0\|_{H^1(\T^2)}.
\]
Hence we arrive to the claim of the lemma.
\end{proof}

A small variant of the previous proof yields an averaged mixing result in which the initial data is taken in $L^2(\T^2)$.
\begin{lemma}\label{prop:mixing-L2rhs}
    Assume that $\theta_0$ has zero mean and the initial distribution $\rho_0$ satisfies  \eqref{rho0}. Assume that the solution $f$ of equation \eqref{hypo} satisfies the estimate
\[
\|f(t)\|_{\Ha^1(\T^2\times\T^2\times\T^2)} \lesssim \e^{-\lambda t}\|f_0\|_{\Ha^1(\T^2\times\T^2\times\T^2)},
\]
for any $t\ge 0$, with $\lambda>0$ independent of $\kappa$ and with initial configuration of the form $f_0=h\otimes h\otimes\rho_0$ with $h\in H^1(\T^2)$.  Then, for any   solution $\theta(t,\cdot)$  to \eqref{eq:AD} starting from $\theta_0\in L^2(\T^2) $, and any $p\in[1,\infty)$, we have the estimate
\[
\EX_\nu\|\theta(t)\|_{\dot H^{-1}}^p  \lesssim  \|\theta_0\|_{L^{2} }^p  \e^{-\lambda_p t},
\]
for some $\lambda_p \in (\lambda/3,\lambda]$ and  any $t\geq 0$. In particular, it holds that
\[
\|\theta(t)\|_{\dot H^{-1}} \le C(\theta_0) \|\theta_0\|_{L^2}e^{-\frac{1}{2}\lambda_p t}
\]
for any $t\ge 0$, where $C(\theta_0)$ is a positive (random) constant depending on the initial datum $\theta_0$ and satisfying $\EX_{\nu}C(\theta_0)^q <\infty$ for any $q \in [1,\infty)$.
\end{lemma}

Actually, our proof shows that $\lambda_p=\lambda$ for $p>3$.

\begin{proof} Arguing similarly as before, we write 
\begin{align*}
  \mel   \EX \la h,\theta(t)\ra^2\\
  & =  \EX_\nu \EX_\kappa\EX_\kappa \iint_{\T^2\times\T^2} h(x)h(z)  \theta_0(X_t^{-1}(x;\{Y_s(y)\}_{s\in[0,t]}))  \theta_0(Z_t^{-1}(z;\{Y_s(y)\}_{s\in[0,t]}))\dd x\dd z\\  
  & =  \EX_\nu \iint_{\T^2\times\T^2}\EX_\kappa  h(X_t (x;\{Y_s(y)\}_{s\in[0,t]}))\EX_\kappa h(Z_t(z;\{Y_s(y)\}_{s\in[0,t]})) \theta_0(x)  \theta_0(z)\dd x\dd z.
\end{align*}    
Again, we may smuggle in the initial density function $\rho_0$. Setting
\[
g(t,x,z,y) = \EX_\nu \left[\EX_\kappa h(X_t(x;\{Y_s(y)\}_{s\in[0,t]}))\EX_\kappa h(Z_t(z;\{Y_s(y)\}_{s\in[0,t]}))\rho_0(Y_t^{-1}(y))\right],
\]
we then find
\[
\EX \la h,\theta(t)\ra^2 =\iiint_{\T^2\times\T^2 \times \T^2} g(t,x,z,y)\theta_0(x)\theta_0(z)\dd x\dd x\dd y.
\]
Using the Feynman--Kac formula that we derived in Appendix \ref{s:feyman-kac}, we observe that $g$ solves the two-point PDE
\[
\partial_t g - v(x,y)\cdot \grad_x g - v(z,y)\cdot\grad_z g = \kappa(\Delta_x g + \Delta_z g) +\nu \Delta_y g.
\]
The equation can be transformed into our two-point PDE by a rotation by 180 degrees. Therefore, using H\"older's inequality and the assumption of the proposition, we find that
\[
\EX_{\nu} \la h,\theta(t)\ra^2 \lesssim \|\theta_0\|_{L^2}^2\|g(t)\|_{L^2}\lesssim \|\theta_0\|_{L^2}^2 \|h\|_{L^{\infty}}\|h\|_{H^1}\e^{-\lambda t},
\]
for any $t\ge0$. Choosing now $h=\e^{ik\cdot x}$, the latter becomes
\[
\EX_{\nu} \la \e^{ik\cdot x},\theta(t)\ra^2 \lesssim  |k|\|\theta_0\|_{L^2}^2  \e^{-\lambda t}.
\]
Multiplication by $|k|^{-2s}$ and summation over $k$ then gives
\[
\EX_{\nu} \|\theta(t)\|_{\dot H^{-s}}^2   = \EX_{\nu} \left(\sum_{k\in 2\pi \Z^2} |k|^{-2s} \la \e^{ik\cdot x},\theta(t)\ra^2\right) \lesssim \left(\sum_{k\in2\pi \Z^2}|k|^{1-2s}\right)\|\theta_0\|_{L^2}^2 \e^{-\lambda t}\lesssim \|\theta_0\|_{L^2}^2 \e^{-\lambda t},
\]
provided that $s>3/2$. It remains to recall that we always have that $\|\theta(t)\|_{L^2}\le \|\theta_0\|_{L^2}$ by the energy balance. Hence, we may interpolate
\[
 \EX_{\nu} \|\theta(t)\|_{\dot H^{-1}}^p  \le \left(\EX_{\nu} \|\theta(t)\|_{\dot H^{-\frac{p}2}}^2\right) \|\theta_0\|_{L^2}^{p-2}\lesssim \|\theta_0\|_{L^2}^p \e^{-\lambda t} ,
\]
for any $p >3$. The estimate can be extended to the range $p\in[1,3]$ with the help of Jensen's inequality, which actually reduces the exponential rate by a numerical factor.

We now turn to the second statement. It is enough to consider the case $t=n\in\N$ because the general case can then be obtained via the energy balance $\|\theta(t)\|_{L^2} \le \|\theta_0\|_{L^2}$. We define
\[
C(\theta_0) = \sup_{n\in \N} \frac{\|\theta(n)\|_{\dot H^{-1}}}{\|\theta_0\|_{L^2}} \e^{\frac{\lambda n}2},
\]
so that we have the trivial estimate
\[
\|\theta(n)\|_{\dot H^{-1}} \le C(\theta_0) \|\theta_0\|_{L^2}\e^{-\frac{\lambda n}2}.
\]
The moment bounds follow from the previous estimate: for $p>3$, we have that
\[
\EX_{\nu} |C(\theta_0)|^p \le \sum_{n\in \N} \EX_{\nu} \frac{\|\theta(n)\|_{\dot H^{-1}}^p}{\|\theta_0\|_{L^2}^p}\e^{\frac{\lambda p n}2} \lesssim \sum_{n\in \N} \e^{-\frac{\lambda pn}2} <\infty,
\]
and for smaller values of $p$, the statement follows via Jensen's inequality. This completes the proof.
\end{proof}

\begin{proposition}\label{prop41}
Let $q>0$ and $s>1$ be fixed. Under the assumptions of Lemma \ref{lemma41}, there exists a $\kappa$-independent and deterministic constant $\gamma>0$, and a $\Prob_\nu$-a.s.\ finite random function $\hat C>0$ such that for any two mean-zero functions $\theta_0, h\in H^s(\T^2)$, the following estimate holds true
\[
\left|\EX_{\kappa}\int_{\T^2} \theta_0(x)h(X_n(x))\dd x \right| \leq \hat C \|h\|_{H^s}\|\theta_0\|_{H^s}\e^{-\gamma n},
\]
for all $n\in\N$. Moreover, $\EX_\nu[\hat C^q]<\infty$.
\end{proposition}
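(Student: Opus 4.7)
My plan is a Fourier-basis Chebyshev and Borel--Cantelli argument, starting from the second-moment estimate of Lemma \ref{lemma41}. Let $\{\psi_k\}_{k\in\Z^2\setminus\{0\}}$ denote the standard $L^2$-orthonormal Fourier basis of mean-zero functions on $\T^2$, satisfying $\|\psi_k\|_{L^\infty}\lesssim 1$ and $\|\psi_k\|_{H^r}\sim(1+|k|)^r$. Define the random coefficients
\[
a_{kl}^{(n)} := \EX_\kappa\int_{\T^2}\psi_k(x)\,\overline{\psi_l(X_n(x))}\,\dd x.
\]
Two bounds on $|a_{kl}^{(n)}|$ are available. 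First, the deterministic estimate $|a_{kl}^{(n)}|\le 1$, valid because the random flow $X_n$ is almost surely measure-preserving. Second, applying Lemma \ref{lemma41} with $\theta_0=\psi_k$ and $h=\overline{\psi_l}$, together with $\|\psi_k\|_{L^\infty}\|\psi_k\|_{H^1}\lesssim 1+|k|$, gives
\[
\EX_\nu |a_{kl}^{(n)}|^2 \lesssim (1+|k|)\,\e^{-\lambda n}.
\]

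Given the fixed $q>0$ and $s>1$, I choose an integer $Q\ge\max(q,2)$ with $Q>3/(s-1)$, then exponents $s_1,s_2\in(0,s-1)$ such that $Qs_1>3$ and $Qs_2>2$, and a rate $\gamma\in(0,\lambda/Q)$. Setting
\[
\hat C := 1 + \sup_{k,l\ne 0,\, n\in\N}\frac{|a_{kl}^{(n)}|\,\e^{\gamma n}}{(1+|k|)^{s_1}(1+|l|)^{s_2}},
\]
the bound $|a_{kl}^{(n)}|\le \hat C\,(1+|k|)^{s_1}(1+|l|)^{s_2}\,\e^{-\gamma n}$ holds by construction. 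Using the elementary inequality $(\sup_i b_i)^Q \le \sum_i b_i^Q$ together with $\EX_\nu|a_{kl}^{(n)}|^Q\le \EX_\nu|a_{kl}^{(n)}|^2$ (valid for $Q\ge 2$ by the deterministic bound), Fubini yields
\[
\EX_\nu \hat C^Q \lesssim 1 + \sum_{k,l,n}\frac{(1+|k|)\,\e^{-(\lambda-Q\gamma)n}}{(1+|k|)^{Qs_1}(1+|l|)^{Qs_2}} < \infty,
\]
since $Qs_1>3$, $Qs_2>2$ ensure summability in $k,l$ in two dimensions and $Q\gamma<\lambda$ ensures summability in $n$. Hence $\hat C$ is $\Prob_\nu$-a.s.\ finite, and by Jensen $\EX_\nu\hat C^q\le(\EX_\nu\hat C^Q)^{q/Q}<\infty$.

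The bilinear estimate is then obtained by Fourier expanding $\theta_0=\sum_{k\ne 0}\hat\theta_k\psi_k$ and $h=\sum_{l\ne 0}\hat h_l\psi_l$, where the sum is restricted to $k,l\ne 0$ by the mean-zero hypothesis, and estimating
\[
\left|\EX_\kappa\int \theta_0\,h(X_n)\,\dd x\right|\le \hat C\,\e^{-\gamma n}\sum_{k\ne 0}|\hat\theta_k|(1+|k|)^{s_1}\sum_{l\ne 0}|\hat h_l|(1+|l|)^{s_2}.
\]
A Cauchy--Schwarz estimate on each Fourier series, using the convergence of $\sum_{k\ne 0}(1+|k|)^{2(s_i-s)}$ in two dimensions (which requires precisely $s_i<s-1$), produces the bound $\lesssim \hat C\|\theta_0\|_{H^s}\|h\|_{H^s}\e^{-\gamma n}$, completing the proof.

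The principal obstacle is extracting arbitrary $L^q(\Prob_\nu)$ moments of $\hat C$ from the $L^2(\Prob_\nu)$ decay in Lemma \ref{lemma41}. The resolution is precisely the elementary deterministic bound $|a_{kl}^{(n)}|\le 1$, which upgrades the $L^2$ estimate to an $L^Q$ estimate for every $Q\ge 2$ with no loss on the exponential rate $\lambda$; the price to pay is that $\gamma$ must satisfy $Q\gamma<\lambda$ and therefore depends on $q$.
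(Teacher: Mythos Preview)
Your proof is correct and follows the same overall scheme as the paper: Fourier-decompose, apply Lemma~\ref{lemma41} mode by mode to get a second-moment bound on the matrix coefficients $a_{kl}^{(n)}$, absorb polynomial growth in frequency into the $H^s$ norms via Cauchy--Schwarz. The difference lies in how the random constant $\hat C$ with finite moments is extracted. The paper runs two nested Borel--Cantelli arguments: first it defines $N_{k,k'}$ as the last time the $(k,k')$ coefficient exceeds $\e^{-\gamma n}$ and bounds its tail, then it defines a random cutoff $K$ in frequency beyond which $\e^{\gamma N_{k,k'}}\le |k|^\alpha|k'|^{\alpha'}$, and finally sets $\hat C=\max_{|k|,|k'|\le K}\e^{\gamma N_{k,k'}}$; the finiteness of $\EX_\nu[\hat C^q]$ is only stated and deferred to~\cite{BBPS22}. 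You instead define $\hat C$ directly as a weighted supremum and bound its $Q$th moment in one line, exploiting the deterministic bound $|a_{kl}^{(n)}|\le 1$ (from measure preservation of the stochastic flow) to upgrade the $L^2(\Prob_\nu)$ estimate to $L^Q(\Prob_\nu)$ without loss on the rate $\lambda$. This is more elementary and fully self-contained. The price you pay is that your rate satisfies $\gamma<\lambda/Q$ with $Q\ge q$, so $\gamma$ depends on the fixed moment order $q$, whereas the paper's route allows $\gamma<\lambda/2$ independently of $q$; since the proposition fixes $q$ at the outset, this is not an issue here.
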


We will see in the proof of Proposition \ref{prop:mix} that the result extends to any $s>0$ via duality and interpolation arguments. Moreover, it  extends to any time $t\ge0$ via stability estimates.

The idea behind this proposition revolves around the Borel--Cantelli argument that was sketched in Section \ref{s:strategy}, and goes back to the original approach by Bedrossian, Blumenthal and Punshon-Smith \cite{BBPS22}. For a more accessible proof where the decay, obtained via Harris Theorem, occurs in $L^\infty$, see \cite{BlumenthalCotiZelatiGvalani23}*{Proposition 4.6}. Since in our case the decay happens with $L^2$ norms and there are small technical differences, we present here a proof of the result that is tailor made for our needs.

\begin{proof}
Let $\{\e^{ik \cdot x}/\sqrt{2\pi}: k\in \Z^2\}$ be the orthonormal Fourier basis for $L^2(\T^2)$, which we equip with the complex inner product $\la\cdot ,\cdot\ra$.
Fix a positive number $\gamma>0$, two wave numbers $k,k'\in i\Z^2\setminus\{0\}$, and define the random variable
\[
N_{k,k'} = \max\left\lbrace n\in \N :  \left|\EX_{\kappa}\la  \e^{ik'\cdot X_n(x)}, \e^{ik\cdot x}\ra \right| > 2\pi \e^{-\gamma n}\right\rbrace.
\]

On the one hand, the probability for the random variable $N_{k,k'}$ to be larger that some positive number $\ell>0$ can be estimated via Chebyshev's inequality,
\begin{align*}
\Prob_\nu[N_{k,k'}>\ell] &\leq \sum_{n>\ell} \Prob_\nu \left[ \left| \EX_{\kappa} \la  \e^{ik'\cdot X_n(x)} ,\e^{ik\cdot x}\ra \right| >2\pi  \e^{-\gamma n} \right] \\
&\leq \frac1{(2\pi)^2}\sum_{n>\ell} \e^{2\gamma n} \EX_\nu \left|\EX_{\kappa}\la  \e^{ik'\cdot X_n(x)} ,\e^{ik\cdot x}\ra\right|^2.
\end{align*}
We let $\tilde \theta_k$ denote the solution to the passive scalar equation \eqref{eq:AD} with initial datum $\e^{ik\cdot x}$, so that $\tilde \theta_k(n,x) = \EX_{\kappa} \e^{ik\cdot X_n^{-1}(x)}$. Then, by  a change of variables, we may rewrite the previous estimate as
\[
\Prob_{\nu}[N_{k,k'}>\ell] \le \frac1{(2\pi)^2} \sum_{n>\ell} \EX_\nu\left|\la \e^{ik'\cdot x}, \tilde \theta_k(n,x)\ra\right|^2.
\]

Noticing that since the basis elements $\e^{ik\cdot x}$ are  in $H^1(\T^2)$, we can apply Lemma \ref{lemma41} so that
\[
\EX_\nu\left|\la \e^{ik'\cdot x}, \tilde \theta_k(n,x)\ra\right|^2 \lesssim \|\e^{ik'\cdot x}\|_{L^2}\|\rho_0\|_{W^{1,\infty}}\|\e^{ik\cdot x}\|_{L^{\infty}} \|\e^{ik\cdot x}\|_{H^1} \e^{-\lambda n} \lesssim 
|k|\e^{-\lambda n},
\]
where we used that  the $H^1$ of $\e^{ik\cdot x}$ is controlled by $|k|$. A combination of the previous two estimates gives
\begin{equation}\label{tail}
\Prob_\nu[N_{k,k'}>\ell] \lesssim |k|\sum_{n>\ell} \e^{-(\lambda-2\gamma)n} \lesssim |k|\e^{-(\lambda-2\gamma)\ell}.
\end{equation}
For $\gamma<\lambda/2$, then the right-hand side is summable and we can apply the Borel--Cantelli lemma, which implies
\[
\Prob_\nu\left[\limsup_{\ell\to\infty} N_{k,k'}>\ell\right] = 0.
\]
Therefore, $N_{k,k'}$ is $\Prob_\nu-$almost surely finite.

On the other hand, by definition of the random variable $N_{k,k'}$ we get that
\[
\left| \EX_{\kappa}\la  \e^{ik'\cdot X_n(x)}, \e^{ik\cdot x}\ra \right| \leq 2\pi \e^{\gamma( N_{k,k'}-n)},
\]
for any $n\in\N$. Now if we use the Fourier basis to write the functions $h$ and $\theta_0$ such that
\[
h(x) = \frac1{\sqrt{2\pi}}\sum_{k'\in  \Z^2\setminus\{ 0\}} \hat h(k')\e^{ik'\cdot x}, \quad \theta_0(x) = \frac1{\sqrt{2\pi}} \sum_{k\in  Z^2\setminus\{0\}} \hat \theta_0(k)\e^{ik\cdot x},
\]
the latter observation implies that
\begin{equation}\label{eq42}
\left|\la  \theta_0 ,\EX_{\kappa}h\circ X_n \ra \right| \lesssim \e^{-\gamma n}\sum_{k,k'}|\hat h(k')||\hat\theta_0(k)|\e^{\gamma N_{k,k'}}.
\end{equation}

Next in order, let us define the following random variable,
\[
K = \max\left\lbrace |k|,|k'|\geq 1: \e^{\gamma N_{k,k'}}>|k|^\alpha|k'|^{\alpha'}\right\rbrace,
\]
for some coefficients $\alpha,\alpha'>0$ to be chosen later. In a similar fashion to how we argued for $N_{k,k'}$, and using the tail estimate \eqref{tail}, for any $\ell>0$ we find that
\[
\begin{split}
\Prob_\nu[K>\ell] & \leq \sum_{|k|,|k'|>\ell} \Prob_\nu\left[ \e^{\gamma N_{k,k'}}>|k|^\alpha|k'|^{\alpha'} \right] = \sum_{|k|,|k'|>\ell} \Prob_\nu\left[ N_{k,k'}>\frac{1}{\gamma}\left(\alpha\log|k|+\alpha'\log|k'|\right) \right] \\
& \lesssim \sum_{|k|,|k'|>\ell}|k|^{1-\frac{\alpha}{\gamma}(\lambda-2\gamma)}|k'|^{-\frac{\alpha'}{\gamma}(\lambda-2\gamma)} \lesssim \left(\sum_{|k|>\ell}|k|^{1-\frac{\alpha}{\gamma}(\lambda-2\gamma)} \right)\left(\sum_{|k'|>\ell}|k'|^{-\frac{\alpha'}{\gamma}(\lambda-2\gamma)}\right) \\
&\lesssim \ell^{3-\frac{\alpha}{\gamma}(\lambda-2\gamma)}\ell^{2-\frac{\alpha'}{\gamma}(\lambda-2\gamma)} = \ell^{5-\frac{1}{\gamma}(\alpha+\alpha')(\lambda-2\gamma)},
\end{split}
\]
provided that the infinite sums are convergent, which means
\begin{equation}\label{cond1}
\alpha> \frac{3\gamma}{\lambda-2\gamma} \quad \text{and}\quad \alpha'> \frac{2\gamma}{\lambda-2\gamma}.
\end{equation}
Additionally, in order to apply yet again the Borel--Cantelli lemma, we need the power of $\ell$ to be sufficiently small so that $\Prob_\nu[K>\ell]$ is summable in $\ell$, that is, we need the condition
\begin{equation}\label{cond2}
\alpha+\alpha' > \frac{6\gamma}{\lambda-2\gamma}.
\end{equation}
Therefore thanks to the Borel--Cantelli lemma, we obtain that for any pair $(\alpha,\alpha')$ satisfying conditions \eqref{cond1} and \eqref{cond2}, the random variable $K$ is $\Prob_\nu-$almost surely finite, so that we can define another $\Prob_\nu-$almost surely finite random variable
\[
\hat C = \max_{|k|,|k'|\leq K} \e^{\gamma N_{k,k'}}.
\]
that satisfies the estimate
\[
\e^{\gamma N_{k,k'}}\leq \hat C|k|^\alpha|k'|^{\alpha'},
\]
for all $k,k'\in  \Z^2\setminus\{0\}$. We know that all $q$-moments of this random variable are finite, i.e., $\EX_\nu[\hat C^q]<\infty$ for all $q>0$, thanks to an argument that can be found in \cite{BBPS22}*{Section 7.3} and that we will skip in this proof. Therefore, going back to \eqref{eq42}, putting everything together, and using the Cauchy--Schwarz inequality to recover the $H^s$ norms, we can write
\[
\begin{split}
\left|\EX_{\kappa}\int_{\T^2} \theta_0(x)h(X_n(x))\dd x \right| & \lesssim \hat C\e^{-\gamma n}\sum_{k,k'}|\hat h(k')||\hat\theta_0(k)||k|^\alpha|k'|^{\alpha'} \\
& \lesssim \hat C\e^{-\gamma n} \left(\sum_k |\hat\theta_0(k)||k|^\alpha \right)\left(\sum_{k'} |\hat h(k')||k'|^{\alpha'} \right) \\
& \lesssim \hat C \|h\|_{H^s}\|\theta_0\|_{H^s}\e^{-\gamma n},
\end{split}
\]
that holds true for indices $s>0$ such that $s>\max\{\alpha+1,\alpha'+1\}$.  Notice from \eqref{cond1} and \eqref{cond2} that $\alpha$ and $\alpha'$ can be chosen arbitrarily close to $0$ at the expense of a worse rate of convergence $\gamma>0$, therefore our argument shows that $s$ can be chosen arbitrarily close to but larger than~$1$. 
\end{proof}

With this result we can now proceed with the proof of Proposition \ref{prop:mix}.

\begin{proof}[Proof of Proposition \ref{prop:mix}]
We fix $s>1$ arbitrarily. Putting together the results from Lemma \ref{lemma41} and Proposition \ref{prop41}, we find that exponential decay in $\Ha^1(\T^2)$ of solutions $f$ to equation \eqref{hypo} implies that for any $h\in H^s(\T^2)$, all  mean-free solutions $\theta$ to the passive scalar equation \eqref{eq:AD} with initial datum $\theta_0\in H^s(\T^2)$ satisfy the estimate
\[
\left|\int_{\T^2} \theta(n,x)h(x)\dd x \right| \leq \hat C\|h\|_{\dot H^s}\|\theta_0\|_{\dot H^s}\e^{-\gamma n},
\]
for any discrete times $n\in\N$, where $\gamma$ is a constant independent of $\kappa$.
Arguing by duality, we immediately see that the latter implies a norm estimate
\[
\|\theta(n)\|_{\dot H^{-s}} = \sup_{\|h\|_{\dot H^s}=1}\int_{\T^2} \theta(t,x)h(x)\dd x  \leq \hat C\|\theta_0\|_{\dot H^s}\e^{-\gamma n},
\]
for any $n\in\N$. In view of the trivial monotonicity estimate $\|\theta(n)\|_{L^2}\le \|\theta_0\|_{L^2}$, which corresponds to the case $s=0$, real interpolation allows us to extend this estimate to any $s>0$. In fact, if we denote by $\gamma$ the exponent corresponding to $s=1$, then the interpolation argument shows that for general $s$, the exponential decay proceeds with rate $s\gamma$.

Finally, to attain the result for all positive times, let us argue for simplicity for the case $s=1$, although the general case $s>0$ follows analogously. We can fix $\tau\in[0,1)$ and define $t=\tau+n$ so that the result for discrete time gives us the estimate
\[
\|\theta(t)\|_{\dot H^{-1}} \leq \hat C\e^\gamma \|\theta(\tau)\|_{\dot H^1}\e^{-\gamma t}.
\]
Finally a standard stability estimate for $\nabla\theta$ in \eqref{eq:AD} implies that
\[
\|\theta(\tau)\|_{\dot H^1} \lesssim \|\theta_0\|_{\dot H^1} \e^{\int_0^\tau \|\nabla v(s)\|_{L^\infty}\dd s} \lesssim \|\theta_0\|_{\dot H^1}, \quad \forall \tau\in [0,1),
\]
because our vector field $v$ is smooth. Now redefining $\hat C$ so that it includes the factor $\e^{\gamma}$ as well as the constants absorbed by $\lesssim$ in this last estimate, we find the claim of the proposition.
\end{proof}

Next in order, we prove that uniform-in-diffusivity mixing implies enhanced dissipation with exponential rate independent of $\kappa$

\begin{proof}[Proof of Proposition \ref{prop:diss}]
Let $\kappa>0$, and assume that we have mixing uniform in diffusivity,
\begin{equation}\label{45}
    \|\theta(t)\|_{\dot H^{-1}} \le C \|\theta_0\|_{\dot H^1}\e^{-\lambda t},
\end{equation}
with $\lambda>0$ independent of $\kappa$ and $C$ an almost surely bounded random variable satisfying $\EX_\nu[C^q]<\infty$ for any $q>0$.  A standard stability estimate yields that solutions to the advection-diffusion equation \eqref{eq:AD} satisfy
\[
\frac{1}{2}\frac{\dd}{\dd t}\|\theta(t)\|_{L^2}^2 + \kappa\int_{\T^2}|\nabla\theta(t,x)|^2\dd x = 0.
\]
This implies, on the one hand that the mapping $t\mapsto \|\theta(t)\|_{L^2}$ is monotone decreasing, and on the other hand that for any $\tau>0$,
\begin{equation}\label{451}
2\kappa\int_0^\tau\|\nabla\theta(t)\|_{L^2}^2\dd t \leq \|\theta_0\|_{L^2}^2.
\end{equation}
Therefore, by the mean value theorem, there exists $t'\in (0,\tau)$ such that
\begin{equation}\label{46}
\|\nabla\theta(t')\|_{L^2}^2 \leq \frac{1}{2\kappa\tau}\|\theta_0\|_{L^2}^2.
\end{equation}
Now, if we interpolate between $\dot H^{-1}$ and $\dot H^1$, use the mixing estimate \eqref{45}, and then interpolate between $L^2$ and $\dot H^1$, we can write for any $t\in (2\tau,3\tau)$
\[
\begin{split}
\|\theta(t)\|_{L^2}^2 & \le \|\theta(t)\|_{\dot H^{-1}}\|\grad \theta(t)\|_{L^2} \\
&\le C \e^{-\lambda(t-t')}\|\grad \theta(t')\|_{ L^2}\|\nabla\theta(t)\|_{L^2}.
\end{split}
\]
We then use the monotonicity of $t\mapsto \|\theta(t)\|_{L^2}$ and the previous bound \eqref{46}, and get
\[
\|\theta(t)\|_{L^2}^2 \le C \e^{- \lambda(t-t')}\|\theta_0\|_{L^2} \left(\frac1{2\kappa \tau}\right)^{1/2}  \|\nabla\theta(t)\|_{L^2}  .\]
Integrating over $t\in(2\tau,3\tau)$ and using the monotonicity of the solution in $L^2$ once more, we find that
\[
\tau\|\theta(3\tau)\|_{L^2}^2 \leq   C  \e^{- \lambda\tau }\|\theta_0\|_{L^2} \left(\frac1{2\kappa \tau}\right)^{1/2} \int_{2\tau}^{3\tau} \|\nabla\theta(t)\|_{L^2}  \dd t,
\]
but the integral in the right-hand side, via H\"older's inequality and using \eqref{451}, can be bounded by
\[
\int_{2\tau}^{3\tau}\|\nabla\theta(t)\|_{L^2} \dd t \leq \tau^{1/2}\left(\int_{0}^{3\tau}\|\nabla\theta(t)\|_{L^2}^2\dd t\right)^{1/2} \leq \left(\frac{\tau }{2\kappa}\right)^{1/2} \|\theta_0\|_{L^2}.
\]
So that putting everything together, we obtain
\[
\|\theta(3\tau)\|_{L^2}^2 \le \frac{C}{2\kappa \tau }   \e^{- \lambda\tau}\|\theta_0\|_{L^2}^2.
\]
Notice that we can get rid of $\tau$ in the denominator because there is no degeneracy for $\tau\to 0$ since $\|\theta(\tau )\|_{L^2}\leq \|\theta_0\|_{L^2}$ for all $\tau \geq 0$. Hence, 
 setting $\lambda'=\lambda/6$ and choosing $C'\sim \sqrt{C}$ appropriately, we get the desired estimate with uniform rate.

For small times, $t\lesssim \log\frac1{\kappa}$, this estimate is worse than the trivial monotonicity estimate $\|\theta(t)\|_{L^2}\le \|\theta_0\|_{L^2}$.    In fact, there exists a critical value 
\[
t_* = \frac1{\lambda'}\left(\log\frac{C'}{\sqrt{\kappa}}+1\right),
\]
so that
\[
\|\theta(t_*)\|_{L^2}\le \frac{1}{\mathrm{e}} \|\theta_0\|_{L^2}.
\]
By iterating this estimate, and using the monotonicity in time of the $L^2$ norm of $\theta$, we obtain for any $t\ge0$ and $n\in\N_0$ such that $t\in[nt_*,(n+1)t_*)$,
\[
\|\theta(t)\|_{L^2} \le \|\theta(nt_*)\|_{L^2} \le \e^{-n}\|\theta_0\|_{L^2} \le \e^{-\mu(\kappa)  t}\|\theta_0\|_{L^2},
\]
where $\mu(\kappa)$ is given by
\[
\mu(\kappa) = \lambda'\left(\log\frac{C'}{\sqrt{\kappa}}+1\right)^{-1}.
\]
The estimate and the asymptotic formula for $\mu(\kappa)$  follow immediately.

We finally remark that if $C$ is a random variable, the finiteness of the expectations can be readily checked. 
\end{proof}

\appendix

\section{Feynman--Kac formula}\label{s:feyman-kac}

In this appendix, we will show that the function 
\[
f(t,x,z,y) = \EX_\nu \left[\EX_\kappa \theta_0(X_t^{-1}(x;\{Y_s(y)\}_{s\in[0,t]}))\EX_\kappa \theta_0(Z_t^{-1}(z;\{Y_s(y)\}_{s\in[0,t]}))\rho_0(Y_t^{-1}(y))\right]
\]
defined in Section \ref{s:strategy}, as well as in the proof of Lemma \ref{lemma41}, is a solution to the PDE
\[
    \partial_tf + v_c(x,y)\cdot\nabla_xf + v_c(z,y)\cdot\nabla_zf = \kappa(\Delta_xf + \Delta_zf) + \nu\Delta_yf, 
\]
where $(x,z,y)\in \T^2\times\T^2\times\T^2$. We note that because $X_0$, $Z_0$ and $Y_0$ are all identity maps, and thus, in particular, deterministic, it holds that $f_0(x,z,y) = \theta_0(x)\theta_0(z)\rho_0(y)$.

We start by observing that we can combine the dynamics for $X_t$, $Z_t$ and $Y_t$ given by \eqref{flow} and \eqref{heat} into a single It\^o drift-diffusion process,
\begin{equation}\label{itodrift}
\dd \chi_t = U(\chi_t)\dd t + \sigma dW_t, \quad \chi_0 = \textrm{id}_{\T^6},
\end{equation}
where $W_t =(\tilde B_t,\hat B_t, B_t)$ can now be considered as a standard Brownian motion in $\T^6$, $\sigma\in \R^{6\times 6}$ is a diagonal matrix of the form
\[
\sigma = \sqrt{2}\diag\left(\sqrt{\kappa},\sqrt{\kappa},\sqrt{\kappa},\sqrt{\kappa},\sqrt{\nu},\sqrt{\nu}\right),
\]
and $U:\T^6\to\R^6$ is the vector field defined by
\[
U(x,z,y) = \begin{pmatrix}
    v_c(x,y)\\
    v_c(z,y)\\
    0
\end{pmatrix},
\]
with $v_c$ denoting the $2$-dimensional vector field defined in \eqref{31} and $(x,z,y)\in\T^6$.
To be consistent with the notation introduced previously, here we write $\EX_\sigma$ for the expectation with respect to the noise induced by $W_t$, which in turn  is nothing but integration with respect to the probability measure $\Prob_\kappa\otimes\Prob_\kappa\otimes\Prob_\nu$.

With this representation of the problem it is clear, see for instance the standard reference \cite{Karatzas98}, that the It\^o drift-diffusion process \eqref{itodrift} produces a solution to the Fokker-Planck equation
\begin{equation}\label{fokker-planck}
\partial_t f + U\cdot\nabla f = \frac{1}{2}\sigma^2:\nabla^2 f, \quad f(0,\cdot) = f_0,
\end{equation}
via Feynman--Kac formula
\[
f(t,\xi) = \EX_\sigma f_0(\chi_t^{-1}(\xi)).
\]
On the one hand, it is easy to see that \eqref{fokker-planck} coincides exactly with the two-point PDE \eqref{hypo} because
\[
U\cdot\nabla  = v(x,y)\cdot \nabla_x + v(z,y)\cdot\nabla_z,
\]
and
\[
\frac{1}{2}\sigma^2:\nabla^2 = \kappa\Delta_x + \kappa\Delta_z + \nu\Delta_y.
\]
On the other hand, notice that we can write
\[
\begin{split}
\EX_\sigma f_0(\chi_t^{-1}(\xi)) & = \EX_\sigma \left[(\chi_t)_\#f_0(\xi)\right] = \EX_\sigma\left[(X_t,Z_t,Y_t)_\#f_0(x,z,y)\right] \\
& = \EX_\sigma\left[f_0\left(X_t^{-1}(x),Z_t^{-1}(z),Y_t^{-1}(y)\right)\right]\\
& = \EX_\sigma\left[\theta_0(X_t^{-1}(x))\theta_0(Z_t^{-1}(z))\rho_0(Y_t^{-1}(y))\right]
\end{split}
\]
where we used the subscript $\#$ to denote the push-forward measure. As we mentioned before, $\EX_\sigma$ represents an integration with respect to the probability measure $\Prob_\kappa\otimes\Prob_\kappa\otimes\Prob_\nu$. Although we are slightly abusing the notation, we can identify that the first component of the product probability measure $\Prob_\kappa$ interacts exclusively with $X_t^{-1}(x)$, the second component $\Prob_\kappa$ does the same exclusively with $Z_t^{-1}(z)$, whereas $\Prob_\nu$ interacts with the three processes: with $Y_t$ through diffusion, and with both $X_t$ and $Z_t$ through advection. Thus, this observation yields
\[
\EX_\sigma f_0(\chi_t^{-1}(\xi)) = \EX_\nu \left[\EX_\kappa \theta_0(X_t^{-1}(x))\EX_\kappa \theta_0(Z_t^{-1}(z))\rho_0(Y_t^{-1}(y))\right],
\]
so that we obtain the Feynman--Kac formula for the Fokker--Planck equation \eqref{hypo} and the It\^o process \eqref{itodrift} that we were seeking.

\section*{Acknowledgements}
This work is partially funded by the Deutsche Forschungsgemeinschaft (DFG, German Research Foundation) under Germany's Excellence Strategy EXC 2044--390685587, Mathematics M\"unster: Dynamics Geometry Structure, and  by grant 432402380. VNF gratefully acknowledges support by the ERC/EPSRC Horizon Europe Guarantee EP/X020886/1. The authors thank Andr\'e Schlichting and Samuel Punshon-Smith for stimulating discussions.

\bibliographystyle{abbrv}
\bibliography{euler.bib}

\end{document}